\newcommand{\supp}{\operatorname{supp}}
\newcommand{\Rr}{{\mathbb{R}}}
\newcommand{\Nn}{{\mathbb{N}}}
\newcommand{\Ee}{{\mathds{E}}}
\newcommand{\Pp}{{\mathcal{P}}}
\def\dx{{\rm d}x}
\def\dy{{\rm d}y}
\def\dt{{\rm d}t}
\def\leq{\leqslant}
\def\geq{\geqslant}
\numberwithin{equation}{section}
\newtheoremstyle{thmlemcorr}{10pt}{10pt}{\itshape}{}{\bfseries}{.}{10pt}{{\thmname{#1}\thmnumber{
#2}\thmnote{ (#3)}}}
\newtheoremstyle{thmlemcorr*}{10pt}{10pt}{\itshape}{}{\bfseries}{.}\newline{{\thmname{#1}\thmnumber{
\newtheoremstyle{defi}{10pt}{10pt}{\itshape}{}{\bfseries}{.}{10pt}{{\thmname{#1}\thmnumber{
#2}\thmnote{ (#3)}}}
\newtheoremstyle{remexample}{10pt}{10pt}{}{}{\bfseries}{.}{10pt}{{\thmname{#1}\thmnumber{
#2}\thmnote{ (#3)}}}
\newtheoremstyle{ass}{10pt}{10pt}{}{}{\bfseries}{.}{10pt}{{\thmname{#1}\thmnumber{
A#2}\thmnote{ (#3)}}}
\theoremstyle{thmlemcorr}
\newtheorem{theorem}{Theorem}
\numberwithin{theorem}{section}
\newtheorem{lemma}[theorem]{Lemma}
\newtheorem{proposition}[theorem]{Proposition}
\theoremstyle{thmlemcorr*}
\newtheorem{theorem*}{Theorem}
\newtheorem{lemma*}[theorem]{Lemma}
\newtheorem{corollary*}[theorem]{Corollary}
\newtheorem{proposition*}[theorem]{Proposition}
\newtheorem{problem*}[theorem]{Problem}
\newtheorem{conjecture*}[theorem]{Conjecture}
\theoremstyle{defi}
\newtheorem{hyp}{Assumption}
\theoremstyle{remexample}
\newtheorem{remark}[theorem]{Remark}
\theoremstyle{ass}
\begin{document}

\title{A duality approach to a price formation MFG model}

\author{Yuri Ashrafyan}
\author{Tigran Bakaryan}
\author{Diogo Gomes}
\author{Julian Gutierrez}
\thanks{King Abdullah University of Science and Technology (KAUST), CEMSE Division, Thuwal 23955-6900. Saudi Arabia. e-mail: julian.gutierrezpineda@kaust.edu.sa}%



\keywords{Mean Field Games; Price formation; Common noise; ADD Keywords}

\thanks{
      The authors were partially supported by KAUST baseline funds and 
 KAUST OSR-CRG2017-3452.
}
\date{\today}

\begin{abstract}
We study the connection between the Aubry-Mather theory and a mean-field game (MFG) price-formation model. We introduce a framework for Mather measures that is suited for constrained time-dependent problems in $\Rr$. Then, we propose a variational problem on a space of measures, from which we obtain a duality relation involving the  MFG problem examined in \cite{gomes2018mean}. 
\end{abstract}

\maketitle

\section{introduction}
This paper studies the connection between Aubry-Mather theory and certain mean-field games (MFG) that model price formation.
 More precisely, we consider the MFG system
\begin{equation}\label{eq:MFG system}
	\begin{cases}
	-u_t(t,x) + H(x, \varpi(t) + u_x(t,x) )=0 &\quad (t,x)\in [0,T]\times \Rr
	\\
	m_t(t,x) - \big(H_p(x, \varpi(t) + u_x(t,x))m(t,x)\big)_x =0 &\quad (t,x)\in [0,T]\times \Rr
	\\
	-\int_{\Rr} H_p(x, \varpi(t) + u_x(t,x))m(t,x) dx = Q(t)&\quad t\in [0,T]
	\end{cases} ,
\end{equation}
subject to initial-terminal conditions
\begin{equation}\label{eq:initial terminal}
	\begin{cases}
	u(T,x)=u_T(x)
	\\
	m(0,x)=m_0(x)
	\end{cases}\quad x\in \Rr,
\end{equation}
where $Q$, $u_T$, and $m_0$ are given functions, $m_0$ is a probability measure on $\Rr$, and the triplet $(u,m,\varpi)$ is the unknown. Here, the state of a typical agent
is the variable $x\in \Rr$ and represents the assets of that agent. The distribution of assets in 
the population of the agents at time $t$ is encoded in the probability measure $m(\cdot, t)$.
The agents change their assets by trading at a price $\varpi(t)$. The trading is subject to a balance condition encoded in the
third equation in \eqref{eq:MFG system}.  This integral constraint that guarantees supply
$Q(t)$ meets demand is represented by the term on the left-hand side of that condition.  

As introduced in \cite{gomes2018mean}, $u$ is the value function of an agent who trades a commodity with supply $Q$ and price $
\varpi$. The function $u$ is characterized by the first equation in \eqref{eq:MFG system} and the terminal condition in \eqref{eq:initial terminal}. Each agent selects their trading rate in order to minimize a given cost functional (see \eqref{eq: Min problem DS} below). The optimal control selection is $-H_p(x,\varpi(t)+u_x(t,x))$. Under this optimal control, the density $m$ describing the population of agents evolves according to the second equation in \eqref{eq:MFG system} and the initial condition in \eqref{eq:initial terminal}. The third equation in \eqref{eq:MFG system}, which we refer to as the balance condition, is an integral constraint that guarantees supply meets demand. 

\begin{remark}\label{Remark: Notion Sol MFG}
The notion of solutions of \eqref{eq:MFG system} and \eqref{eq:initial terminal} we consider is the following: $u\in C([0,T]\times \Rr)$ solves the first equation in the viscosity sense, $m\in C([0,T],\Pp(\Rr))$ solves the second equation in the distributional sense, and $\varpi\in C([0,T])$.

The system \eqref{eq:MFG system} and \eqref{eq:initial terminal} corresponds to the case $\epsilon=0$ studied in \cite{gomes2018mean}. Under Assumptions \ref{hyp:Q C infty}, \ref{hyp: H separability DS} and \ref{hyp: V-uT Lipschitz 2nd D bounded DS} (see Section \ref{Sec: Assupmtions}), the authors used a fixed-point argument and the vanishing viscosity method to prove the existence of a solution $(u,m,\varpi)$, where $u$ is Lipschitz and semiconcave in $x$, and differentiable $m$-almost everywhere, $m\in C([0,T],\Pp(\Rr))$ w.r.t. the $1$-Wasserstein distance, and $\varpi \in W^{1,1}([0,T])$. Furthermore, under Assumption \ref{hyp: V-uT convex DS}, they obtained uniqueness of solutions, further differentiability of $u$ in $x$ for every $x$, and the boundedness of $u_{xx}$ and $m$. 
\end{remark}

The connection between 
Hamilton-Jacobi equations and Aubtry-Mather theory 
is now well established; see, for example, \cite{Mane2}, \cite{FATH1, FATH2, FATH3, FATH4},
\cite{EGom1, EGom2,BGom}, or \cite{MTL17}. In particular, several generalizations
of Aubry-Mather theory were developed to address problems like diffusions and
study second-order Hamilton-Jacobi equations 
\cite{G, G10}. In particular, 
duality methods, since the pioneering papers 
in \cite{LewisVinter} and \cite{FlemingVermes2} have been explored in multiple
contexts, see for example \cite{MR2883292}. 
Of great interest are the applications to the selection problem 
in the vanishing discount case, 
\cite{MR2458239}, 
\cite{MR3581314}
\cite{2016arXiv160507532G}
and
\cite{MR4175148} and to the large time behavior of Hamilton-Jacobi equations \cite{CGMT}, \cite{gomes2020large}.  
Recently applications of Aubry-Mather theory were developed for MFGs in 
\cite{Cd1} to study long-time behavior, and in 
 \cite{cannarsa2020weak}, where the authors construct Mather measures to prove the existence of solutions for ergodic first-order MFG systems with state constraints.

The prototype MFG system corresponds to an optimal control problem for an agent who optimizes a cost function that depends on the aggregate behavior of other agents encoded in the population distribution $m$. In \cite{gomes2018mean}, the optimal control setting of the MFG system \eqref{eq:MFG system} and \eqref{eq:initial terminal} corresponds to an agent interacting with the population through the price. At the same time, the balance condition between demand and supply is satisfied. This type of interaction arises in price formation models, where the commodity price being traded is an endogenous rather than an exogenous variable. 

Price formation models were studied previously in  
 \cite{BS02} and \cite{BS10}  in the contex of revenue maximization by a producer. 
 Earlier price models  in the context of mean-field games include \cite{MR2835888,MR2817378,MR2573148, MR3210751} and \cite{lachapelle2016efficiency}. 
Applications to electricity markets were examined in 
\cite{matoussi2018extended, ATM19, FTT20} 
and \cite{TBD20}. 
Price models with a market clearing condition were introduced in 
 \cite{gomes2018mean}, \cite{SummerCamp2019}, \cite{GoGuRi2021},
\cite{JSF20} and \cite{FT20}. 
The former work addresses a model for solar renewable energy certificate markets. Finally, 
 \cite{fujii2021equilibrium} examines the effect of a major player.

%
%
%
%

The variational problem that we consider is a relaxed version of the Lagrangian formulation introduced in \cite{gomes2018mean} to derive \eqref{eq:MFG system} and \eqref{eq:initial terminal}. We prove a duality formula (Theorem \ref{thm: Representation}) between solutions of the MFG system and minimizers of a variational problem in the set of generalized Mather measures. For that, we begin by introducing the Legendre transform, $L$,  of $H$; that is,
\begin{equation}\label{eq:Legendre transform}
	L(x,v)=\sup_{p\in\Rr} \left\{-pv - H(x,p)\right\}.
\end{equation}
Our variational problem is
\begin{equation}\label{eq:Min problem m0}
	\inf_{\mu \in\mathcal{H}(m_0)}  \int_0^T \int_{\Rr^2} L(x,v) + v u'_T(x)  ~d\mu(t,x,v),
\end{equation}
where $\mathcal{H}(m_0)$ is the set of admissible measures. These measures are Radon positive measures on $[0,T]\times\Rr^2$ that satisfy the following three conditions. First, the moment condition 
\[
	\int_{[0,T]\times\Rr^2} (|x|^{\zeta_1}+|v|^{\zeta_2} )d\mu(t,x,v)< \infty,
\]
where $\zeta_1$ and $\zeta_2$ depend on the growth of the Hamiltonian 
in Assumption \ref{hyp: H growth} and 
 satisfy condition \ref{eq: zetas selection}. Second, for some probability measure $\nu$ on $\Rr$, the Radon measure verifies
\begin{equation}
\label{holc}
	\int_{[0,T]\times\Rr^2} \varphi_t (t, x) + v \varphi_x(t, x) d\mu (t,x,v)  =\int_{\Rr} \varphi (T, x) d\nu - \int_{\Rr} \varphi (0, x) dm_0
\end{equation}
for all suitable test functions $\varphi$. We refer to the previous as the holonomy condition, as it is motivated by the holonomy condition introduced in \cite{Mane2}. Lastly, the admissible measures satisfy the following balance condition 
\[
	\int_{[0,T]\times\Rr^2} \eta(t) (v-Q(t)) d\mu (t,x,v) =0
\]
for all $\eta$ continuous. If $u_T\in C^1(\Rr)$ with $u'_T$ bounded (see Assumption \ref{hyp: uT C1 Linfty}), the holonomy condition applied to $\varphi(t,x)=u_T(x)$ (see \eqref{eq:holonomy equality}) provides the identity
\[
	\int_{[0,T]\times\Rr^2} v u'_T(x) d\mu (t,x,v)  =\int_{\Rr} u_T(x) d\nu - \int_{\Rr} u_T(x) dm_0.
\]
Using the previous identity, the variational problem \eqref{eq:Min problem m0}  is equivalent to
\begin{equation}\label{eq:Min problem}
	\inf_{\substack{\nu \in \Pp (\Rr) \\
	\mu \in \mathcal{H}(m_0,\nu)}} \int_0^T \int_{\Rr^2} L(x,v) ~d\mu(t,x,v) + \int_{\Rr} u_T(x) d\nu(x),
\end{equation}
where $\mathcal{H}(m_0,\nu)$ is the set of measures that satisfy the moment condition, the holonomy condition for some probability measure $\nu$ on $\Rr$, and the balance condition. 
The difference between \eqref{eq:Min problem} and \eqref{eq:Min problem m0} is the term $-\int_{\Rr} u_T(x) dm_0$, which is independent of $\mu$.

The motivation for this relaxed problem is as follows.  
In \cite{gomes2018mean}, each agent selects a control variable $\alpha$ aiming to solve
\begin{equation}\label{eq: Min problem DS}
	\inf_{\alpha \in \mathcal{A}} \int_0^T L(x(t),\alpha(t)) +\varpi(t) \alpha(t)dt + u_T(x(T)),
\end{equation}
where $\dot{x}(t)=\alpha(t)$, and $\mathcal{A}$, the set of bounded measurable
functions, is the set of admissible controls. The price $\varpi$ is chosen so that the aggregate supply meets the demand. Here, 
following Mather's theory (see 
for example \cite{MR2458239}), 
we introduced a relaxed version of problem \eqref{eq: Min problem DS}. This relaxation is problem
\eqref{eq:Min problem}. The key idea is that each optimal trajectory $t\mapsto x^*(t)$ with optimal control $t\mapsto \alpha^*(t)$ solving \eqref{eq: Min problem DS} defines the measure $d\mu^*(t,x,v)=dt \times d_{\delta_{x^*(t)}}\times d_{\delta_{\alpha^*(t)}}$. This measure is supported on the path $t\mapsto (x^*(t),\alpha^*(t))$
and satisfies \eqref{holc}. Accordingly, the function in \eqref{eq: Min problem DS}
becomes
\begin{align*}
	\int_0^T \int_{\Rr^2} L(x,v) +\varpi v ~d\mu^*(t,x,v) + \int_{\Rr} u_T(x) d\nu^*(x)
\end{align*}
where $d\nu^*=\delta_{x^*(T)}$; that is, the variational cost for the measure equals the variational cost for the optimal trajectory. 
%

Our first result for the variational problem on measures \eqref{eq:Min problem} is a duality formula between minimizing measures and Hamilton-Jacobi equations that involves the following function.
Let $h:\mathcal{P}(\Rr) \times \mathcal{P}(\Rr) \to \Rr \cup \{+\infty\}$ be
\begin{equation}\label{def: h}
	h(m_0, \nu) =
	\begin{cases} 
		\displaystyle 	\inf_{\substack{\mu \in \mathcal{H}(m_0,\nu)}}  \int_{\Omega} L(x,v) + v u_T'(x)d\mu(t,x,v),
		& \text{if }  \mathcal{H}(m_0,\nu) \neq \emptyset,
		\\ 
		+\infty & \text{if }  \mathcal{H}(m_0,\nu) = \emptyset.
	\end{cases}
\end{equation}
The main assumptions on $L$ and $u_T$ are stated in Section \ref{Sec: Assupmtions}, 
after which, in Section \ref{Sec:Duality}, we develop a framework of Mather measures  suitable for the MFG system \eqref{eq:MFG system} and \eqref{eq:initial terminal}. Finally, 
in that section,  we prove the following  theorem. 

\begin{theorem}\label{thm:convex duality h}
Let $h$ be given by \eqref{def: h} and let $\zeta$ satisfy \eqref{eq: L growth out of H}. Suppose Assumptions \ref{hyp: H convex}-
\ref{hyp:Q C infty} hold. Assume that $\nu_T \in \mathcal{P}(\Rr)$ is such that $\mathcal{H}(m_0,\nu_T) \neq \emptyset$.
Then,
\begin{align*}
	h(m_0, \nu_T) = -  \inf_{\substack{\varphi, \eta}} \sup_{(t,x)} \bigg( & -\int_\Rr \varphi(0,x)dm_0(x) + \int_\Rr \varphi(T,x)d\nu_T(x) 
	\\
	&   + T \big( -\varphi_t + Q\eta + H(x,\varphi_x + \eta + u'_T)\big)\bigg), 
\end{align*}
where $(t,x) \in [0,T] \times \Rr$,  $\varphi \in \Lambda([0,T]\times\Rr)$ and $\eta \in C([0,T])$.
\end{theorem}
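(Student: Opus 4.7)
The formula is a Fenchel--Rockafellar style duality for a linear program over positive Radon measures. I would set up the Lagrangian
\[
\mathcal{L}(\mu,\varphi,\eta) = \int_{[0,T]\times\Rr^2}\!\Big[L(x,v) + vu_T'(x) + \varphi_t + v\varphi_x + \eta(v-Q)\Big]\,d\mu
 - \int_{\Rr}\varphi(T,x)\,d\nu_T + \int_{\Rr}\varphi(0,x)\,dm_0,
\]
where $\varphi$ is the multiplier for the holonomy condition and $\eta$ for the balance condition. The target identity is then the usual $\inf_\mu\sup_{\varphi,\eta}\mathcal L = \sup_{\varphi,\eta}\inf_\mu\mathcal L$, after one cosmetic reformulation (absorbing a pointwise constraint into a $T\sup_{(t,x)}$ penalty) explained below.

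\textbf{Weak duality.} For any admissible $\mu\in\mathcal{H}(m_0,\nu_T)$ and any pair $(\varphi,\eta)$, set $p(t,x)=\varphi_x+\eta+u_T'$. The Fenchel--Young inequality gives $L(x,v)+vp\geq -H(x,p)$ pointwise. Integrating against $\mu$, using the holonomy condition to rewrite $\int v\varphi_x\,d\mu$ and the balance condition to rewrite $\int v\eta\,d\mu$, yields
\[
\int (L+vu_T')\,d\mu \ \geq\ \int \varphi(0)\,dm_0 - \int\varphi(T)\,d\nu_T - \int\!\big[{-\varphi_t+Q\eta+H(x,\varphi_x+\eta+u_T')}\big]\,d\mu.
\]
Testing the holonomy condition with $\varphi(t,x)=t$ shows $\mu([0,T]\times\Rr^2)=T$, so the last integral is bounded by $T\sup_{(t,x)}(-\varphi_t+Q\eta+H)$. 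Taking the infimum over $\mu$ and then the supremum over $(\varphi,\eta)$ produces the $\geq$ direction of the claimed identity.

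\textbf{Strong duality.} For the reverse inequality I see two natural routes. Route (a): build a recovery sequence explicitly from the MFG solution. Letting $(u,m,\varpi)$ be the solution of \eqref{eq:MFG system}--\eqref{eq:initial terminal} produced in \cite{gomes2018mean}, take
\[
\varphi^*(t,x) = u(t,x) - u_T(x) - \int_t^T Q(s)\varpi(s)\,ds, \qquad \eta^* = \varpi.
\]
A direct computation using $\varphi^*_x+\eta^*+u_T' = u_x+\varpi$ and the Hamilton--Jacobi equation shows $-\varphi^*_t+Q\eta^*+H(x,\varphi^*_x+\eta^*+u_T') = 0$ pointwise, so the sup-term vanishes; meanwhile $\varphi^*(T,\cdot)\equiv 0$ and $\int\varphi^*(0,\cdot)\,dm_0 = \int u(0,\cdot)\,dm_0 - \int u_T\,dm_0 - \int_0^T Q\varpi\,dt$, which matches $h(m_0,\nu_T)$ as computed directly from $\mu^* = dt\otimes m_t\otimes\delta_{-H_p(x,\varpi+u_x)}$ using the balance condition. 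Route (b): apply an abstract Fenchel--Rockafellar theorem on the space of Radon measures with the moment weight $|x|^{\zeta_1}+|v|^{\zeta_2}$, paired with continuous test functions of controlled growth; here convexity of $H$ (Assumption \ref{hyp: H convex}) gives convexity of the primal, and the existence of some $\mu\in\mathcal H(m_0,\nu_T)$ (assumed in the statement) supplies the Slater-type qualification needed to close the gap.

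\textbf{Main obstacle.} The principal difficulty is the strong-duality direction. In route (a) the function $u$ from \cite{gomes2018mean} is only Lipschitz and semiconcave, so $\varphi^*$ need not belong to the test-function class $\Lambda([0,T]\times\Rr)$; one must regularize $u$ (e.g.\ through the vanishing-viscosity approximants of \cite{gomes2018mean}), produce a sequence $(\varphi^\epsilon,\eta^\epsilon)$, verify that the HJ inequality is preserved up to a vanishing error using the semiconcavity, and pass to the limit in the linear functional. In route (b) the challenge is topological: the unbounded state space $\Rr$ combined with the moment growth condition \eqref{eq: zetas selection} forces one to work in weighted spaces, and one has to verify that the constraint operator $(\varphi,\eta)\mapsto(-\varphi_t-v\varphi_x-\eta(v-Q))$ has closed range (equivalently, no hidden duality gap) under the growth assumptions on $H$ (Assumption \ref{hyp: H growth}) and the smoothness of $Q$ (Assumption \ref{hyp:Q C infty}). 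The constant-shift step $\varphi\mapsto\varphi+Mt$ that converts the pointwise constraint into the $T\sup_{(t,x)}$ penalty is the same whichever route is chosen, and is the bridge from the ``hard-constrained'' dual to the stated form.
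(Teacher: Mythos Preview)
Your weak-duality argument is fine and matches the paper. The issue is strong duality.

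\textbf{Route (a) does not prove the theorem as stated.} The hypotheses of Theorem~\ref{thm:convex duality h} are only Assumptions~\ref{hyp: H convex}--\ref{hyp:Q C infty}; the existence result of \cite{gomes2018mean} for the MFG system requires in addition the separability and regularity Assumptions~\ref{hyp: H separability DS}--\ref{hyp: V-uT convex DS}, which are not assumed here. Even if you imposed them, the MFG solution produces one specific terminal measure $\nu_T=m(T,\cdot)$, whereas the theorem is claimed for \emph{every} $\nu_T$ with $\mathcal{H}(m_0,\nu_T)\neq\emptyset$. So Route~(a) at best proves a special case under stronger hypotheses; it cannot close the gap in the generality required. (It is, essentially, the content of Theorem~\ref{thm: Representation}, proved later under those extra assumptions.)

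\textbf{Route (b) is what the paper does, but the obstacle is not the one you name.} The paper works on the weighted space $C_\zeta(\Omega)$, whose dual is exactly the signed Radon measures with finite $\zeta$-moment, and sets
\[
f(\phi)=T\sup_{\Omega}\big(\phi-L-vu_T'\big),\qquad
g(\phi)=\begin{cases}-\int_\Omega\phi\,d\overline{\mu}&\phi\in\mathcal C,\\ +\infty&\text{otherwise},\end{cases}
\]
where $\mathcal C$ is the closure of $\{A^v\varphi-(v-Q)\eta\}$ and $\overline{\mu}$ is any fixed element of $\mathcal H(m_0,\nu_T)$ (this is where the non-emptiness hypothesis is used---to \emph{define} $g$, not as a Slater point). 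The Fenchel--Rockafellar theorem then requires only that $f$ be continuous at some $\phi_0$ with $f(\phi_0),g(\phi_0)<\infty$; there is no closed-range verification. The paper shows $f$ is continuous on all of $C_\zeta(\Omega)$ directly from the growth bounds \eqref{eq: L growth out of H} and the choice of $\zeta$ in \eqref{eq: zetas selection} (this is Lemma~\ref{lem: f continuous}), and then exhibits $\phi_0=-C+vu_T'\in\mathcal C$ (take $\varphi(t,x)=Ct-u_T(x)$, $\eta=0$) with $f(\phi_0)\le 0$. Computing $f^*$ and $g^*$ (Propositions~\ref{Prop: f transform} and \ref{Prop: g transform}) gives the stated formula immediately. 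Your ``constant-shift $\varphi\mapsto\varphi+Mt$'' observation is built into this architecture: the $T\sup$ is the definition of $f$, and the fact that admissible $\mu$ have total mass $T$ emerges from computing $f^*$.
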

The previous result is proved in Section \ref{Sec:Duality} using Fenchel-Rockafellar's duality theorem.

Next,  in Section \ref{Sec:MFG Results}, we establish additional results for the MFG system \eqref{eq:MFG system} and \eqref{eq:initial terminal}. In particular, 
in Proposition \ref{prop:price Lipschitz}, 
we prove that $\varpi$ solving \eqref{eq:MFG system} and \eqref{eq:initial terminal} is Lipschitz continuous. This result was stated but not proved in \cite{gomes2018mean}. Here, 
we give the full details of the proof. 

Finally, in Section \ref{Sec:Proof Theorem 1.2}, we establish our main result,
which is summarized in the following theorem. 

\begin{theorem}\label{thm: Representation}
Let $(u,m,\varpi)$ solve \eqref{eq:MFG system} and \eqref{eq:initial terminal}. Suppose that Assumptions \ref{hyp: H convex}-
\ref{hyp: V-uT convex DS} hold. Then,
\begin{equation*}
	\int_{\Rr} \left(u(0, x) - u_T(x)\right) d m_0(x) - \int_0^T Q(t) \varpi(t) dt = 
	\inf_{\substack{\mu \in \mathcal{H}(m_0 )}}  \int_{\Omega} L(x,v) + v u_T'(x)d\mu(t,x,v).
\end{equation*}
\end{theorem}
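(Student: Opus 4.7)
The strategy is to prove the identity as two matching inequalities: ``$\geq$'' follows from Theorem \ref{thm:convex duality h} applied to an explicit test pair built from the MFG solution, and ``$\leq$'' follows by constructing an admissible Mather measure from the optimal MFG flow. Throughout I use $\mathcal{H}(m_0) = \bigcup_{\nu \in \mathcal{P}(\Rr)} \mathcal{H}(m_0,\nu)$, so the infimum on the right-hand side equals $\inf_{\nu}\, h(m_0, \nu)$.

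For the lower bound, I would apply Theorem \ref{thm:convex duality h} with
\[
\varphi(t,x) = u(t,x) - u_T(x) + \int_0^t Q(s)\varpi(s)\, ds, \qquad \eta(t) = \varpi(t).
\]
Then $\varphi_x + \eta + u_T' = u_x + \varpi$, and the Hamilton--Jacobi equation in \eqref{eq:MFG system} gives $-\varphi_t + Q\eta + H(x, \varphi_x + \eta + u_T') \equiv 0$, so the inner supremum in Theorem \ref{thm:convex duality h} vanishes. Using $u(T,\cdot) = u_T$ and $m_0, \nu_T \in \mathcal{P}(\Rr)$, the boundary contribution collapses to $-\int_\Rr (u(0,x) - u_T(x))\, dm_0(x) + \int_0^T Q\varpi\, dt$. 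Theorem \ref{thm:convex duality h} then yields $h(m_0, \nu_T) \geq \int_\Rr (u(0,x) - u_T(x))\, dm_0 - \int_0^T Q\varpi\, dt$ for every admissible $\nu_T$, and infimizing over $\nu_T$ gives the required lower bound on the variational problem.

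For the matching upper bound, I construct the candidate
\[
d\mu^*(t,x,v) = m(t,x)\, dt\, dx \otimes \delta_{v^*(t,x)}(dv), \qquad v^*(t,x) := -H_p(x, \varpi(t) + u_x(t,x)),
\]
and verify the three admissibility conditions: the holonomy condition with $\nu = m(T,\cdot)$ is exactly the continuity equation in \eqref{eq:MFG system} tested against $\varphi$; the balance condition is the third equation of \eqref{eq:MFG system}; moment bounds follow from the regularity stated in Remark \ref{Remark: Notion Sol MFG}. Evaluating the cost, Legendre duality at the optimal velocity gives $L(x, v^*) = (\varpi + u_x) H_p(x, \varpi + u_x) - H(x, \varpi + u_x) = (\varpi + u_x) H_p - u_t$, so that
\[
L(x, v^*) + v^* u_T'(x) = (\varpi + u_x - u_T')\, H_p(x, \varpi+u_x) - u_t.
\]
Integrating against $m$, using $\int_\Rr H_p m\, dx = -Q$ on the $\varpi H_p m$ piece, integrating the $u_t m$ term by parts in $t$ via the continuity equation so that the $u_x H_p m$ cross-terms cancel, and invoking the holonomy identity applied to $\varphi(t,x)=u_T(x)$ to rewrite $\int v^* u_T'(x)\, d\mu^*$ in terms of $m_0$ and $m(T,\cdot)$, the cost collapses exactly to $\int_\Rr (u(0,x) - u_T(x))\, dm_0 - \int_0^T Q\varpi\, dt$, closing the inequality.

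The main obstacle is regularity. In the lower bound, $\varphi$ inherits only the Lipschitz-in-$x$/semiconcave regularity of $u$, so admissibility in $\Lambda([0,T]\times\Rr)$ must be verified directly or recovered by a mollification argument that perturbs the HJ residual by a controlled amount while preserving the boundary terms. In the upper bound, the integration by parts in $t$ relies on the $W^{1,1}$ regularity of $\varpi$ proved in Proposition \ref{prop:price Lipschitz}, the boundedness of $m$ and $u_{xx}$ under Assumption \ref{hyp: V-uT convex DS} as noted in Remark \ref{Remark: Notion Sol MFG}, and decay of $m$ at spatial infinity (from the moment condition) to kill the boundary contributions in $x$.
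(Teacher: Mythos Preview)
Your two-inequality decomposition matches the paper's (Lemma~\ref{lem: lower bound} and Lemma~\ref{lem: upper bound}), but the execution of each half differs.

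For the lower bound, the paper does \emph{not} invoke Theorem~\ref{thm:convex duality h}. Instead it tests an arbitrary $\mu\in\mathcal{H}(m_0)$ directly: it mollifies $u$ to $u^\alpha$ (with a time rescaling so that $u^\alpha\in\Lambda$ on all of $[0,T]$), applies the holonomy identity to $u^\alpha$, uses the Fenchel inequality $-vu^\alpha_x\le L+vu_T'+v\varpi+H(x,\varpi+u^\alpha_x+u_T')$, and then invokes the commutation lemma (Lemma~\ref{lem: subsolution}) to bound the Hamilton--Jacobi residual by $C'\alpha$. Your route through Theorem~\ref{thm:convex duality h} with the specific pair $(\varphi,\eta)$ is a clean repackaging that lands on the same computation; the regularity gap you flag (your $\varphi$ is not in $\Lambda$) is resolved in exactly the same way --- mollify and use Lemma~\ref{lem: subsolution}, which in turn relies on the Lipschitz regularity of $\varpi$ established in Proposition~\ref{prop:price Lipschitz}. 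So here the difference is cosmetic.

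For the upper bound, the paper takes a genuinely different and more roundabout route. Rather than building $\mu^*$ directly from $(u,m,\varpi)$, it works with the $\epsilon$-regularized system \eqref{eq:regularized MFG}, defines $\mu^\epsilon$ from $(u^\epsilon,m^\epsilon,\varpi^\epsilon)$ via \eqref{def:mu epsilon}, proves uniform moment bounds (Proposition~\ref{prop: gamma1 moment mu}) and weak compactness (Proposition~\ref{prop:weak limi mu}), shows the limit $\mu$ lies in $\mathcal{H}(m_0)$ (Proposition~\ref{prop:mu epsilon in H}), and then computes the cost at the $\epsilon$-level where all integrations by parts are classical before passing to the limit. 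The spatial boundary terms from these integrations by parts are not simply discarded: the paper controls them through an averaging device (Lemma~\ref{int-rep}) along carefully chosen sequences $x_n,y_n$ where $x_nm^\epsilon(t,\pm 2x_n)\to 0$, together with the second-order energy estimate \eqref{second-eng-est}. Your direct construction is shorter and conceptually clear, but the ``decay of $m$ at spatial infinity'' and the integration by parts in $t$ you defer are precisely the obstacles the paper works around by regularizing. In particular, the paper never asserts that $u\in\Lambda$ or that $m(t,\cdot)$ decays fast enough to kill the $x$-boundary terms in $\int u_x v^* m$; it avoids the question entirely by doing the computation at positive viscosity.
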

In the previous theorem, the value of \eqref{eq:Min problem m0} is characterized by the solution of the MFG system \eqref{eq:MFG system} and \eqref{eq:initial terminal}. Although $m$ does not appear explicitly on the right-hand side of the previous expression, it determines the balance condition for the MFG. Notice that for this minimization problem, $u_T$ is fixed, whereas the terminal measure $\nu_T$ is varying (see Section \ref{Sec:Duality}).
%
%
%
%
%
%

\section{Assumptions}\label{Sec: Assupmtions}
Here, we present the main assumptions used in this paper. First, we consider the usual convexity assumption on the Hamiltonian, $H$, for which we require the strongest form of this property.

\begin{hyp}\label{hyp: H convex}
	For all $x\in\Rr$, the map $p\mapsto H(x,p)$ is 
 uniformly convex; that is, there exists a constant $\kappa >0$ such that $H_{pp}(x,p)\geq \kappa$ for all $(x,p)\in\Rr^2$.
\end{hyp}

The previous assumption guarantees not only convexity but also coercivity of $H$ in the $p$ variable (see \cite{bauschke2017convex}, Corollary 11.17). Hence, the Legendre transform of $H$, given by \eqref{eq:Legendre transform}, is well-defined, and it is convex and coercive in the second argument (\cite{cannarsa}, Theorem A.2.6). 

The following four assumptions are used in Section \ref{Sec:Duality} to establish duality results. The following growth conditions for $H$ and the regularity for $u_T$ and $Q$ are required
used when we apply Fenchel-Rockafellar's theorem.

\begin{hyp}\label{hyp: H growth}
There exists $\gamma_1 \geq  1$, $\gamma_2> 1$, a positive constant $C$, and non-negative constants $C_1$ and $C_2$ such that, for all $(x,p)\in \Rr^2$,
	\[
	\begin{cases}
	-C_2|x|^{\gamma_1} + \dfrac{|p|^{\gamma_2}}{C \gamma_2}-C \leq H(x,p) \leq -C_1 |x|^{\gamma_1} + \dfrac{|p|^{\gamma_2}C}{\gamma_2}+C,
	\\
	| H_x(x,p)| \leq C(|p|^{\gamma_2} +1), 
	\\
	| H_p(x,p)| \leq C(|p|^{\gamma_2-1} + 1).
	\end{cases}
	\]
\end{hyp}
\begin{remark}\label{remark:H growth}
Under Assumption \ref{hyp: H convex}, the Lagrangian, $L$, defined by \eqref{eq:Legendre transform}, satisfies (see \cite{cannarsa}, Theorem A.2.6)
\[
	v=-H_p(x,p) \mbox{ if and only if } p=-L_v(x,v).
\]
Furthermore, Assumption \ref{hyp: H growth} implies a growth condition on $L$; that is,
\begin{equation}\label{eq: L growth out of H}
C_1|x|^{\gamma_1}+\dfrac{|v|^{\gamma_2'}}{\gamma_2' C^{\gamma_2'/\gamma_2}}-C \leq L(x,v) \leq C_2|x|^{\gamma_1} + \dfrac{|v|^{\gamma_2'}C^{\gamma_2'/\gamma_2}}{\gamma_2'}+C,
\end{equation}
where $1/\gamma_2+1/\gamma_2'=1$. To see this, note that the first condition in Assumption \ref{hyp: H growth} bounds the Legendre transform of $H$ between the one of the functions
\[
	p\mapsto -C_1 |x|^{\gamma_1} + \dfrac{|p|^{\gamma_2}C}{\gamma_2}+C \quad \mbox{and} \quad p\mapsto -C_2|x|^{\gamma_1} + \dfrac{|p|^{\gamma_2}}{C \gamma_2}-C.
\]
Their transforms are the lower and upper bounds in \eqref{eq: L growth out of H}, respectively.
\end{remark}

\begin{hyp}\label{hyp: uT C1 Linfty}
		The terminal cost satisfies $u_T \in C^1(\Rr)$, and $|u'_T|\leq C$ for some $C>0$.
\end{hyp}

For the supply, we assume it is a smooth function of time.

\begin{hyp}\label{hyp:Q C infty}
	The supply function, $Q$, is $C^\infty([0,T])$. 
\end{hyp}

The existence of generalized measures minimizing our variational problem \eqref{eq:Min problem m0} relies on the moment estimates that we impose for the initial distribution (see Proposition \ref{prop: gamma1 moment mu}).

\begin{hyp}\label{hyp: m_0 moment}
	The initial density, $m_0$, is a probability measure in $\Rr$, and it has a finite absolute moment of order $\gamma>\gamma_1$; that is, 
	\begin{equation*}
\int_{\Rr}|x|^{\gamma} m_0(x)\dx < +\infty.
	\end{equation*} 
\end{hyp}

Following \cite{gomes2018mean}, we guarantee the solvability of \eqref{eq:MFG system} and \eqref{eq:initial terminal} by considering, together with Assumption \ref{hyp:Q C infty}, the following conditions.

\begin{hyp}\label{hyp: H separability DS}
	The Hamiltonian $H$ is separable; that is, 
	\begin{equation*}
		H(x,p)=\mathcal{H}(p)-V(x),
	\end{equation*} 
	where $V\in C^2(\Rr)$ is bounded from below and $|\mathcal{H}_{pp}|,|\mathcal{H}_{ppp}|\leq C$ for some constant $C>0$. 
\end{hyp}


\begin{remark}
Under the previous assumption, $L$, defined by \eqref{eq:Legendre transform}, is separable as well; that is
\[
	L(x,v)=\mathcal{L}(v) + V(x),
\]
where $\mathcal{L}$ is the Legendre transform of $\mathcal{H}$. Recalling that the Legendre transform is an involutive transformation, in case that $\mathcal{L}$ is uniformly convex, we have $\mathcal{L}_{vv}\geq \kappa'$ for some $\kappa'>0$. Hence, (\cite{cannarsa}, Corollary A. 2.7)
	\[
		\mathcal{H}_{pp}\leq \frac{1}{\kappa'}.
	\]
Furthermore, under Assumption \ref{hyp: H convex}, we obtain $\kappa<\mathcal{H}_{pp} \leq 1/\kappa'$. By abuse of notation, we set $\mathcal{H}=H$ and $\mathcal{L}=L$ when Assumption \ref{hyp: H separability DS} holds.
\end{remark}
\begin{hyp}\label{hyp: V-uT Lipschitz 2nd D bounded DS}
	The potential $V$, the terminal cost $u_T$, the initial density function $m_0$ are $C^2(\Rr)$ functions and  $V$, $u_T$ are  globally Lipschitz. Furthermore,  there exists a constant $C>0$ such that
	\[
		|V''|\leq C, \quad |u_T''|\leq C, \quad |m_0''|\leq C.
	\]
\end{hyp}

The following condition guarantees the uniqueness of solutions of \eqref{eq:MFG system} and \eqref{eq:initial terminal}.

\begin{hyp}\label{hyp: V-uT convex DS}
	The potential $V$ and the terminal cost $u_T$ are convex.
\end{hyp}

\begin{remark}
Assume the Hamiltonian, $H$, satisfies Assumption \ref{hyp: H separability DS}, with a potential, $V$, satisfying Assumption \ref{hyp: V-uT Lipschitz 2nd D bounded DS}. For Assumption \ref{hyp: H growth} to hold, $V$ has to satisfy $C \geq \mbox{Lip}(V)$ and the growth condition 
\begin{equation}\label{V}
	C_1|x|^{\gamma_1}  -K \leq V(x) \leq C_2|x|^{\gamma_1}+K
\end{equation}
for some $K>0$, whereas $\mathcal{H}$ has to satisfy $| \mathcal{H}_p(p)| \leq C(|p|^{\gamma_2-1} + 1)$ and the growth condition 
	\[
		\dfrac{|p|^{\gamma_2}}{C \gamma_2}-C \leq H(p) \leq \dfrac{|p|^{\gamma_2}C}{\gamma_2}+C.
	\]
For instance, the  Hamiltonian 
\begin{equation*}
H(x,p)=\left( 1+|p|^2\right)^{\frac{\gamma_2}{2}}-V(x)
\end{equation*} 
satisfies all the assumptions above if $V$ is a globally Lipschitz function that satisfies \eqref{V}.
\end{remark}

\section{Duality results}\label{Sec:Duality}

This section considers generalized holonomic measures for time-dependent problems in $\Rr$ that are compatible with the integral constraint imposed by the balance condition. We use this formulation to prove Theorem \ref{thm:convex duality h} and for the proof of Theorem \ref{thm: Representation} in Section \ref{Sec:Proof Theorem 1.2}. 

Fix $T>0$. For $\gamma_1\geq 1$ and $\gamma_2>1$ (see Assumption \ref{hyp: H growth}), let $\zeta=(\zeta_1,\zeta_2)$, where
\begin{equation}\label{eq: zetas selection}
0<\zeta_1 \leq \gamma_1, \quad \mbox{and}
\quad 1< \zeta_2 < \gamma'_2.
\end{equation}
Let $\Omega =[0,T] \times\Rr\times\Rr $. Let $\mathcal{R}(\Omega)$ be the set of signed Radon measures on $\Omega$, $\mathcal{R}^+(\Omega)$ be the subset of non-negative elements of $\mathcal{R}(\Omega)$ (\cite{FollandRealAnalysis}, page 212 and 222 or \cite{evansgariepy2015}, 
Definition 1.9), and $\Pp(\Rr)$ be the set of probability measures on $\Rr$. We define
\begin{align}\label{eq:holonomy converge}
\mathcal{H}_1 =\left\{ \mu \in \mathcal{R}^+(\Omega):~\int_{\Omega} (|x|^{\zeta_1}+|v|^{\zeta_2} )d\mu(t,x,v)< \infty \right\}.
\end{align}
This set is determined by the growth conditions for the Hamiltonian, as in Assumption \ref{hyp: H growth}. Next, let 
\begin{equation*}
\Lambda([0,T]\times\Rr)=\left\{ \varphi \in C^1([0,T]\times \Rr):~ \varphi_t,~ \varphi_x \in L^\infty([0,T]\times \Rr) \right\}.
\end{equation*}
Notice that elements of $\Lambda([0,T]\times\Rr)$ are globally Lipschitz continuous functions. This set corresponds to the set of test functions for the holonomy condition, which we define next. Given $m_0,~\nu \in \Pp(\Rr)$, let 

\begin{equation}\label{eq:holonomy equality}
\renewcommand{\arraystretch}{1.4}
\mathcal{H}_2(m_0,\nu)=\left\{ \mu \in \mathcal{R}^+(\Omega):~
\begin{matrix}
\int_{\Omega} \varphi_t (t, x) + v \varphi_x(t, x) d\mu (t,x,v) \\
 =\int_{\Rr} \varphi (T, x) d\nu - \int_{\Rr} \varphi (0, x) dm_0
 \end{matrix} ~ \forall \varphi \in \Lambda([0,T]\times\Rr)  \right\}.
\end{equation}
As mentioned in the Introduction, we refer to the condition defining the set $\mathcal{H}_2(m_0,\nu)$ as the holonomy condition. For a given $\nu \in \Pp(\Rr)$, the set $\mathcal{H}_2(m_0,\nu)$ may be empty. Nevertheless, as we show in Remark \ref{Remark:H(m0) not empty}, there are probability measures satisfying $\mathcal{H}_2(m_0,\nu) \neq \emptyset$. In case $m_0$ satisfies a moment hypothesis (see Assumption \ref{hyp: m_0 moment}), the identity that defines the holonomy condition is well-defined even if the terms are not finite.

Corresponding to the balance condition in \eqref{eq:MFG system}, we set 
\begin{align}\label{eq:holonomy balance}
\mathcal{H}_3 =\left\{ \mu \in \mathcal{R}^+(\Omega):~\int_{\Omega} \eta(t) (v-Q(t)) d\mu (t,x,v) =0, \quad \forall \eta \in C([0,T])\right\}.
\end{align}
Finally, we define
\[
	\mathcal{H} (m_0,\nu):=\mathcal{H}_1 \cap \mathcal{H}_2(m_0,\nu) \cap \mathcal{H}_3, \quad \mbox{and} \quad
	\mathcal{H} (m_0)=\bigcup_{\nu \in \Pp(\Rr)} \mathcal{H} (m_0,\nu).
\]
\begin{remark}
For any $\mu\in \mathcal{H} (m_0)$ there exists a unique $\nu \in \Pp(\Rr)$ such that $\mu \in \mathcal{H}(m_0,\nu)$. To see this, let $\mu \in \mathcal{H}(m_0,\nu)\cap\mathcal{H}(m_0,\tilde{\nu})$. Let $\varphi \in C^1_c(\Rr)$. Then,  $\varphi \in\Lambda([0,T]\times \Rr)$, and \eqref{eq:holonomy equality} holds for both $\nu$ and $\tilde{\nu}$, from which we obtain
\[
	\int_{\Rr} \varphi(x)d\nu(x) - \int_{\Rr} \varphi(x) dm_0(x) = \int_{\Rr} \varphi (x) d\tilde{\nu}(x) - \int_{\Rr} \varphi (x) dm_0(x);
\]
that is, $\int_{\Rr}\varphi d\nu = \int_{\Rr} \varphi d\tilde{\nu}$ for any $\varphi \in C^1_c(\Rr)$. Hence, $\nu  = \tilde{\nu}$ (\cite{FollandRealAnalysis}, Theorem 7.2). We denote the unique measure $\nu$ such that $\mu \in \mathcal{H}(m_0,\nu)$ as $\nu^\mu$.
\end{remark}

\begin{remark}\label{Remark:H(m0) not empty}
If Assumptions \ref{hyp:Q C infty} and \ref{hyp: m_0 moment} hold, 
$\mathcal{H}(m_0)$ is not empty. To see this, let $\overline{X}(t)=\int_0^t Q(s) ds$. Define $\nu \in \Pp(\Rr)$ by
\begin{equation}\label{def: nu}
\int_{\Rr} f(x)d\nu(x) = \int_{\Rr} f(x+\overline{X}(T))dm_0(x)
\end{equation}
for all $f \in C_c(\Rr)$, and define $\mu \in \mathcal{R}^+(\Omega)$ by
\begin{equation}\label{def-mu}
\int_{\Omega} \psi(t,x,v) d\mu(t,x,v) = \int_0^T \int_{\Rr} \psi(t,x+\overline{X}(t),Q(t))dm_0(x) dt
\end{equation}
for all $\psi  \in C_c(\Omega)$. Next, we use the following cut-off function 
\begin{equation*}
	\theta(x)=\begin{cases}
		1\quad &x\in (-1,1)\\
		\vartheta(x)\quad &(-2,2)\setminus(-1,1)\\	
		0\quad &x\in\Rr\setminus	
		(-2,2),
	\end{cases}
\end{equation*}
where $\vartheta$ is chosen such that $\theta\in C^1(\Rr)$, $0\leq\theta\leq 1$, and $||\theta||_{C^1(\Rr)}\leq c$.  Let 
\[
	h_n(x,v)=\theta\left( \frac{2x}{n}\right)\theta\left( \frac{2v}{n}\right) \left(|x|^{\zeta_1}+|v|^{\zeta_2}\right) \mbox{  and  } g_n=\mathds{1}_n(x)\mathds{1}_n(v) \left(|x|^{\zeta_1}+|v|^{\zeta_2}\right),
\]
where $\mathds{1}_n$ is the characteristic function of the interval $[-n,n]$.  $g_n$ is a sequence of measurable functions that satisfy $0 \leq g_n(x,v) \leq |x|^{\zeta_1}+|v|^{\zeta_2}$ and $g_{\frac{n}{2}}(x,v)\to |x|^{\zeta_1}+|v|^{\zeta_2}$ pointwise for $(x,y)\in\Rr^2$. Although the functions $g_n$ are not continuous, they are Borel-measurable, and hence their integral w.r.t. $\mu$ is well-defined. Note that $g_{\frac{n}{2}}(x,v)\leq h_n(x,v)\leq g_n(x,v)$  and $h_n\in C_c(\Omega)$. Then, 
\begin{align*}
	\int_{\Omega} g_{\frac{n}{2}}(x,v) d\mu(t,x,v)&\leq \int_{\Omega} h_n(x,v) d\mu(t,x,v) 
	\\
	&=\int_{0}^T \int_{\Rr}h_n(x+\overline{X}(t),Q(t)) dm_0(x) dt 	\\& \leq\int_{0}^T \int_{\Rr}g_n(x+\overline{X}(t),Q(t)) dm_0(x) dt 
	\\
	& \leq \int_0^T \int_{\Rr}|x+\overline{X}(t)|^{\zeta_1}+ |Q (t)|^{\zeta_2} dm_0(x) dt 
	\\
	& \leq \int_0^T \int_\Rr 2^{\zeta_1-1} \left( |x|^{\zeta_1} + |\overline{X}(t)|^{\zeta_1} \right) + |Q(t)|^{\zeta_2} dm_0(x) dt
	\\
	& = 2^{\zeta_1-1}\left( T \int_\Rr |x|^{\zeta_1} dm_0(x) + \|\overline{X}\|^{\zeta_1}_{L^{\zeta_1}([0,T])}\right)+\|Q\|^{\zeta_2}_{L^{\zeta_2}([0,T])}
	\\
	& \leq  2^{\zeta_1-1} T\left( \int_\Rr |x|^{\zeta_1} dm_0(x) +  T^{\zeta_1}\|Q\|_\infty^{\zeta_1} \right)+T \|Q\|^{\zeta_2}_{\infty}
	\\
	& = C(\zeta_1,\zeta_2, T, m_0, Q),
\end{align*}
where $C(\zeta_1,\zeta_2, T, m_0, Q)$ is finite by Assumptions \ref{hyp:Q C infty} and \ref{hyp: m_0 moment}. Using the previous inequality and 
the Monotone Convergence Theorem, we conclude that $\mu$ satisfies \eqref{eq:holonomy converge}.
Therefore,
 for any $\varphi\in\Lambda([0,T] \times \Rr)$,  we have
\begin{equation}\label{conver-int}
\begin{split}
\int_{\Omega} |\varphi_t (t,x)| + |v| |\varphi_x(t,x)| d\mu (t,x,v)<\infty,\\
\int_{\Rr} |\varphi(T,x+\overline{X}(T))| dm_0(x)+\int_{\Rr} |\varphi(0,x)|dm_0(x)<\infty.
\end{split}
\end{equation}
Denote $\overline{M}=\max\limits_{t\in[0,T]}|\overline{X}(t)|$ and let
$\phi^{n}(t,x,v)=\varphi^n(t,x)\theta\left(\frac{v}{n} \right)$, where $\varphi^n(t,x)=\varphi(t,x)\theta\left(\frac{x}{n} \right)$. Because  $\varphi\in\Lambda([0,T] \times \Rr)$, from the definitions of $\phi^{n}$, $\varphi^{n}$, and $\theta$, we have
\begin{gather}\label{esti}
	\max_{(t,x)\in[0,T]\times\Rr}|\varphi^{n}(t,x)|=\max_{(t,x)\in[0,T]\times\Rr}\left| \varphi(t,x)\theta\left( \tfrac{x}{n}\right) \right|\leq C n, 
	\\
	\max_{(t,x,v)\in\Omega}|\phi^{n}_t(t,x,v)|\leq	\max_{(t,x)\in\Omega}|\varphi^{n}_t(t,x)|=\max_{(t,x)\in[0,T]\times\Rr}\left| \varphi_t(t,x)\theta\left( \tfrac{x}{n}\right) \right| \leq C,  \nonumber
	\\
||\phi^{n}_x||_{C(\Omega)}\leq	\max_{(t,x)\in[0,T]\times\Rr}|\varphi^{n}_x(t,x)|=\max_{(t,x)\in[0,T]\times\Rr}\left| \varphi_x(t,x)\theta\left( \tfrac{x}{n}\right)+ \tfrac{1}{n}\varphi(t,x)\theta^\prime\left( \tfrac{x}{n}\right) \right| \leq C. \nonumber
\end{gather}
Relying on  these estimates from Assumption \ref{hyp: m_0 moment}, we have
\begin{align}\label{o1}
	&\left| \int_{0}^{T}\int_{n<|x|<2n}\int_{n<|v|<2n} \phi^{n}_t  + v \phi^{n}_x d\mu (t,x,v) \right| \nonumber
	\\
	&\leq \int_{0}^{T}\int_{n<|x|<2n}\int_{n<|v|<2n} C(|v| +1)d\mu (t,x,v)	\nonumber 
	\\
	&\leq \int_{\Omega} C(|v| +1)\left( \theta\left( \frac{2x}{n}-3\right)+\theta\left(  \frac{2x}{n}+3\right)\right) \left( \theta\left( \frac{2v}{n}-3\right)+\theta\left(  \frac{2v}{n}+3\right)\right) d\mu (t,x,v)	 \nonumber
	\\
	&\leq \int_{0}^{T} \int_{\Rr}2C(|Q(t)| +1) \left(\theta\left( \frac{2(x+\overline{X}(t))}{n}-3\right)+\theta\left(  \frac{2(x+\overline{X}(t))}{n}+3\right)\right)dm_0(x) \nonumber
	\\
	&\leq TC\int_{\frac{n}{2}-\overline{M}<|x|<\frac{5}{2}n+\overline{M}}dm_0(x)=o(1).
\end{align}
Furthermore, Assumption \ref{hyp: m_0 moment} with \eqref{esti} implies that 
\begin{align}
	\label{id}
	& \left|\left(  \int_{n}^{2n-\overline{X}(T)}+ \int_{-2n-\overline{X}(T)}^{-n}\right) \varphi^n(T,x+\overline{X}(T)) dm_0(x)\right|	 \nonumber	\\	 \nonumber	
	 &\leq  \int_{n-\overline{M}<|x|<2n+\overline{M}} \left| \varphi^n(T,x+\overline{X}(T)) \right| dm_0(x)\leq C\int_{n-\overline{M}<|x|<2n+\overline{M}} |x| dm_0(x)=o(1),
	 \nonumber	
	\\
	&\left| \int_{n<|x|<2n} \varphi^n(0,x) dm_0(x)\right|\leq C\int_{n<|x|<2n} |x| dm_0(x)=o(1),
\end{align}
where $o(1)\to0$ when $n\to\infty$.
Note that  $\phi^{n}\in C_c(\Omega)$ and $\supp(\phi^{n})=[0,T]\times[-2n,2n]^2$.
 Consequently, for all $n\geq n_0$, where $n_0$ satisfies $\frac{\|Q\|_{\infty}}{n_0}\leq 1$,   by \eqref{def-mu}, we have
\begin{align}
\label{iden-1}
		& \int_{0}^{T}\int_{-2n}^{2n}\int_{-2n}^{2n}\phi^{n}_t (t,x,v) + v \phi^{n}_x(t,x,v) d\mu (t,x,v) \nonumber	
		\\
		&= \int_0^T\int_{\Rr}  \phi^n_t (t,x+\overline{X}(t),Q(t)) + Q(t) \phi^{n}_x(t,x+\overline{X}(t),Q(t))dm_0(x)\dt \nonumber
		\\
		&= \int_0^T\int_{\Rr}\theta\left(\frac{Q(t)}{n} \right)\left(  \varphi^n_t (t,x+\overline{X}(t))+ Q(t) \varphi^n(t,x+\overline{X}(t)) \right)dm_0(x)dt \nonumber
		\\
		&=\int_{\Rr} \int_0^T \frac{d}{dt}\varphi^n(t,x+\overline{X}(t)) dm_0(x)dt \nonumber
		\\
		&= \int_{-2n-\overline{X}(T)}^{2n-\overline{X}(T)} \varphi^n(T,x+\overline{X}(T)) dm_0(x)-\int_{-2n}^{2n} \varphi^n(0,x)dm_0(x).
\end{align}
On the other hand, \eqref{o1} and \eqref{id}, yield
 \begin{align*}
 	&\int_{0}^{T}\int_{n<|x|<2n}\int_{n<|x|<2n} \phi^{n}_t (t,x) + v \phi^{n}_x(t,x) d\mu (t,x,v) 
 	\\
 	&- \left(  \int_{n}^{2n-\overline{X}(T)}+ \int_{-2n-\overline{X}(T)}^{-n}\right)\varphi^{n}(T,x+\overline{X}(T)) dm_0(x)+\int_{n<|x|<2n} \varphi^{n}(0,x)dm_0(x) =o(1).
\end{align*}
Therefore, \eqref{iden-1} with  the definitions of $\phi^{n}$, $\varphi^{n}$, $\theta$  implies 
\begin{equation}\label{iden-2}
	\begin{split}
		\int_{0}^{T}\int_{-n}^{n}\int_{-n}^{n} \varphi_t (t,x) &+ v \varphi_x(t,x) d\mu (t,x,v) \\
		&= \int_{-n}^{n} \varphi(T,x+\overline{X}(T)) dm_0(x)-\int_{-n}^{n} \varphi(0,x)dm_0(x)+o(1).
	\end{split}
\end{equation}
With similar arguments, by using \eqref{def: nu}, we  prove that 
\begin{equation}\label{iden-3}
	\begin{split}
	\int_{-n}^{n} \varphi(T,x+\overline{X}(T)) dm_0(x)&-\int_{-n}^{n} \varphi(0,x)dm_0(x) \\
		&=   \int_{-n}^{n} \varphi(T,x) d\nu(x)- \int_{-n}^{n}\varphi(0,x) dm_0(x)+o(1).
	\end{split}
\end{equation}
Combining  \eqref{iden-2} and \eqref{iden-3}, we obtain 
  \begin{equation*}
 	\begin{split}
 		\int_{0}^{T}\int_{-n}^{n}\int_{-n}^{n} \varphi_t (t,x) &+ v \varphi_x(t,x) d\mu (t,x,v) \\
 		&=   \int_{-n}^{n} \varphi(T,x) d\nu(x)- \int_{-n}^{n}\varphi(0,x) dm_0(x) +o(1).
 	\end{split}
 \end{equation*}
Letting $n\to \infty$ in the preceding identity and using \eqref{conver-int}, we conclude that $\mu$ satisfies \eqref{eq:holonomy equality}. 

Lastly, proceeding as before, we prove that $\mu$ verifies \eqref{eq:holonomy balance}. Hence, $\mu \in \mathcal{H}(m_0,\nu)$ and, therefore, $\mathcal{H}(m_0) \neq \emptyset$.
\end{remark}

The minimization in \eqref{eq:Min problem m0} is an infinite-dimensional optimization problem. To study the connection between solutions of \eqref{eq:MFG system} and the dual problem of \eqref{eq:Min problem m0}, we compute the dual problem using Fenchel-Rockafellar's theorem (\cite{Villanithebook},  Theorem 1.9.):
\begin{theorem}\label{Thm: Fenchel-Rockafellar}
Let $E$ be a normed vector space and let $E^*$ be its topological dual space. Let $f$ and $g$ be convex functions on $E$ with values in $\Rr \cup \{+\infty\}$. Denote by $f^*$ and $g^*$ the Legendre-Fenchel transforms of $f$ and $g$, respectively, defined by
\[
	f^* (x^*)=\sup_{ x \in E} \left( \left\langle x^*,x\right\rangle - f(x) \right), \quad g^* (x^*)=\sup_{ x \in E} \left( \left\langle x^*,x\right\rangle - g(x) \right).
\]
Assume there exists $x_0 \in E$ such that $f(x_0), ~g(x_0) < +\infty$, and $f$ is continuous at $x_0$. Then
\begin{equation}\label{eq: Aux FenchelRock statement}
	\inf_{x \in E} f(x) + g(x) = \max_{y \in E^*} - f^*(-y) - g^*(y).
\end{equation}
\end{theorem}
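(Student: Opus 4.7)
The plan is to prove the two inequalities separately: an elementary \emph{weak duality} bound coming from the Fenchel--Young inequality, and a \emph{strong duality} bound together with attainment of the supremum on the right-hand side, the latter via a geometric Hahn--Banach separation argument in $E \times \Rr$ that exploits the continuity hypothesis on $f$ at $x_0$.

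\textbf{Step 1 (weak duality).} Directly from the definitions of $f^*$ and $g^*$, for every $x \in E$ and $y \in E^*$,
$$f(x) \geq \langle -y, x\rangle - f^*(-y), \qquad g(x) \geq \langle y, x\rangle - g^*(y).$$
Adding these and then taking $\inf_x$ on the left and $\sup_y$ on the right yields
$$\inf_{x\in E}\bigl(f(x)+g(x)\bigr) \;\geq\; \sup_{y\in E^*}\bigl(-f^*(-y)-g^*(y)\bigr),$$
with both sides bounded above by $f(x_0)+g(x_0)<+\infty$.

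\textbf{Step 2 (geometric setup for strong duality).} Set $\alpha := \inf_{x\in E}(f(x)+g(x))$. If $\alpha=-\infty$, weak duality forces the right-hand side to be $-\infty$ as well, and the identity is trivial. Assuming $\alpha\in \Rr$, introduce in $E \times \Rr$ the convex sets
$$A := \{(x,r)\in E\times\Rr : f(x)<r\}, \qquad B := \{(x,r)\in E\times\Rr : r \leq \alpha - g(x)\}.$$
Convexity of $A$ and $B$ follows from convexity of $f$ and $g$, and the very definition of $\alpha$ gives $A\cap B = \emptyset$. The continuity of $f$ at $x_0$ provides a neighbourhood $U$ of $x_0$ and $M<\infty$ with $\sup_U f \leq M$; hence $U\times(M,\infty) \subset A$ and $\operatorname{int}(A)\neq\emptyset$.

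\textbf{Step 3 (separation and extraction of the dual maximizer).} Geometric Hahn--Banach furnishes a non-zero continuous linear functional $\Phi(x,r) = \langle y, x\rangle + \lambda r$ on $E\times\Rr$ and a scalar $c\in\Rr$ with $\Phi(a)\geq c\geq \Phi(b)$ for all $a\in A$, $b\in B$, strictly on $\operatorname{int}(A)$. Letting $r\to+\infty$ inside $A$ forces $\lambda \geq 0$. Non-degeneracy $\lambda>0$ is obtained by testing the two inequalities simultaneously at $(x_0,M+1)\in\operatorname{int}(A)$ and at $(x_0,\alpha-g(x_0))\in B$: if $\lambda=0$ one would get $\langle y,x_0\rangle>c$ and $\langle y,x_0\rangle\leq c$, contradiction. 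After rescaling so that $\lambda=1$, letting $r\downarrow f(x)$ in the first inequality and setting $r=\alpha-g(x)$ in the second gives
$$f(x) + \langle y, x\rangle \geq c \geq \alpha - g(x) + \langle y, x\rangle$$
on the effective domains of $f$ and $g$, respectively. Taking $\inf_x$ on each side yields $-f^*(-y)\geq c$ and $c\geq \alpha - g^*(y)$, hence $-f^*(-y)-g^*(y) \geq \alpha$. Combined with weak duality, this produces the claimed equality and shows the supremum is attained at $y$, justifying the $\max$.

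\textbf{Main obstacle.} The crux is ruling out a ``vertical'' separating hyperplane, i.e.\ ensuring $\lambda > 0$; otherwise $\Phi$ cannot be descended to an element of $E^*$ usable in the Fenchel--Young identification of Step~3. The continuity hypothesis on $f$ at $x_0$ is sharp exactly here: it delivers both the open set $\operatorname{int}(A)\neq\emptyset$ required for the strict form of separation, and a specific point $x_0$ at which $f$ and $g$ are simultaneously finite, producing the contradiction that excludes $\lambda=0$. Without this hypothesis, standard examples exhibit a non-trivial duality gap.
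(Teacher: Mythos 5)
The paper does not actually prove this statement: Theorem \ref{Thm: Fenchel-Rockafellar} is quoted as a known tool from the cited reference (Villani, \emph{Topics in Optimal Transportation}, Theorem 1.9), so there is no internal proof to compare against. Your argument is the standard Hahn--Banach separation proof of Fenchel--Rockafellar duality, essentially the same as in that reference, and it is correct: weak duality by Fenchel--Young, then separation of the strict epigraph of $f$ from the hypograph of $\alpha-g$, with the continuity of $f$ at $x_0$ used exactly where you say, namely to guarantee $\operatorname{int}(A)\neq\emptyset$ and to rule out a vertical hyperplane ($\lambda=0$), which also yields attainment of the dual maximum. One small slip to fix: from $c\geq \alpha-g(x)+\langle y,x\rangle$ on $\operatorname{dom}(g)$ you should take the supremum over $x$ of the right-hand side and obtain $c\geq \alpha+g^*(y)$, not $c\geq \alpha-g^*(y)$; combined with $-f^*(-y)\geq c$ this gives the intended conclusion $-f^*(-y)-g^*(y)\geq\alpha$, so the final statement stands.
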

In the previous result, it is part of the theorem that the supremum in the right-hand side of \eqref{eq: Aux FenchelRock statement} is a maximum. 

Now, we introduce the definitions we need to apply Theorem \ref{Thm: Fenchel-Rockafellar}. Recall that $\Omega =[0,T] \times\Rr\times\Rr $, and let $\zeta=(\zeta_1,\zeta_2)$ according to \eqref{eq: zetas selection}. Consider the normed vector space
\begin{align}\label{def: Czeta}
	C_\zeta(\Omega) :=& \left\{ \phi \in C(\Omega):~ \|\phi\|_{\zeta}:= \sup_{\Omega} \left| \frac{ \phi(t,x,v)}{1+|x|^{\zeta_1} + |v|^{\zeta_2}} \right|< \infty, \right.
	\\
	& \left. ~ \lim_{|x|,|v| \to \infty} \frac{\phi(t,x,v)}{1+|x|^{\zeta_1} + |v|^{\zeta_2}} = 0 ~ \mbox{uniformly for } t \in [0,T]\right\}.\nonumber
\end{align}

\begin{remark}\label{remark:Czeta dual}
Let $\zeta$ satisfy \eqref{eq: zetas selection}. The dual of $\left(C_\zeta(\Omega),\|\cdot\|_\zeta\right)$ is 
\begin{align*}
\mathcal{U}^\zeta=\left\{ \mu \in \mathcal{R}(\Omega):~ \int_{\Omega} (1+|x|^{\zeta_1} + |v|^{\zeta_2})d|\mu|(t,x,v) < \infty \right\}.
\end{align*}
To see this, let 
\[
C_0(\Omega):=\left\{ \psi \in C(\Omega):~ \lim_{|x|,|v|\to \infty} \psi(t,x,v) = 0, ~ \mbox{uniformly for }t \in [0,T] \right\}.
\]
From the Riesz Representation Theorem (\cite{FollandRealAnalysis}, Theorem 7.17), we have that 
\begin{equation}\label{eq: RieszC0}
C_0(\Omega)^*\mbox{ and }\mathcal{R}(\Omega) \mbox{ are isomorphic}.
\end{equation}
Define $\Phi: C_0(\Omega) \to  C_\zeta(\Omega)$ by $\Phi(\psi)=\phi:=(1+|x|^{\zeta_1} + |v|^{\zeta_2}) \psi$. Then $\Phi$ is a linear isometry since $\|\Phi(\psi)\|_\zeta=\|\psi\|_{\infty}$. Now, given $f \in C_\zeta(\Omega)^*$, define $F\in C_0(\Omega)^*$ by $F=f\circ \Phi$. Using \eqref{eq: RieszC0}, there exists $\tilde{\mu}\in \mathcal{R}(\Omega)$ such that
\[
\langle F,\psi\rangle  = \int_\Omega \psi(t,x,v) ~d \tilde{\mu}(t,x,v)
\]
for all $\psi \in C_0(\Omega)$. 
Given $\phi \in C_\zeta(\Omega)$, let $\psi=\Phi^{-1} (\phi)=\frac{\phi}{1+|x|^{\zeta_1}+|v|^{\zeta_2}} \in C_0(\Omega)$. Then
\[
\langle f, \phi \rangle = \langle F,\psi\rangle=\int_\Omega \phi(t,x,v) ~\frac{d \tilde{\mu}(t,x,v)}{1+|x|^{\zeta_1} + |v|^{\zeta_2}}.
\]	
Hence, because $(x,v)\mapsto \frac{1}{1+|x|^{\zeta_1} + |v|^{\zeta_2}}$ is continuous and bounded, the measure $d\mu(t,x,v):= \frac{1}{1+|x|^{\zeta_1} + |v|^{\zeta_2}}d \tilde{\mu}(t,x,v)$ is a Borel measure finite on compact sets. Therefore (\cite{FollandRealAnalysis}, Theorem 7.8), $\mu$ is a Radon measure on $\Omega$. Notice that any Hahn, and therefore, Jordan decomposition of $\tilde{\mu}$ (\cite{FollandRealAnalysis}, Theorem 3.4) provides a corresponding decomposition for $\mu$, from which we obtain that $d|\mu|= \frac{1}{1+|x|^{\zeta_1} + |v|^{\zeta_2}} d|\tilde{\mu}|$. Therefore,
\[
\int_\Omega (1+|x|^{\zeta_1} + |v|^{\zeta_2} )d|\mu|(t,x,v) =\int_\Omega d|\tilde{\mu}|(t,x,v) < \infty.
\]
On the other hand, any $\mu \in \mathcal{U}^\zeta$ defines a linear map on $C_\zeta(\Omega)$ by
\[
	\phi \mapsto \int_{\Omega} \phi(t,x,v) d|\mu|.
\]
From the following inequality 
\[
	\left| \int_{\Omega}  (1+|x|^{\zeta_1} + |v|^{\zeta_2}) \dfrac{\phi(t,x,v)}{1+|x|^{\zeta_1} + |v|^{\zeta_2}} d|\mu| \right| \leq \|\phi\|_{\zeta} \int_{\Omega}  (1+|x|^{\zeta_1} + |v|^{\zeta_2})  d|\mu|,
\]	
we see that this linear map is also bounded.
Hence, we conclude that $C_\zeta(\Omega)^*$ and $\mathcal{U}^\zeta(\Omega)$ are isomorphic. It can be proved that they are isometrically isomorphic (see \cite{FollandRealAnalysis}, Theorem 7.17).
\end{remark}

Define (see Remark \ref{remark:Czeta dual})
\begin{align}\label{def: U1}
	\mathcal{U}_1=\left\{ \mu \in \mathcal{U}^\zeta:~ \mu \geq 0,~ \int_{\Omega} d \mu = T \right\}.
\end{align}
Notice that $\mathcal{U}_1$ is the set of non-negative Radon measures that satisfy \eqref{eq:holonomy converge} and for which \eqref{eq:holonomy equality} holds for $\varphi(t,x)=t$.  Now, we define an operator related to the left-hand side of \eqref{eq:holonomy equality}. Take $v \in \Rr$. Define, $	A^v:  ~ C^1([0,T] \times \Rr)  \to C_\zeta(\Omega)$  by
\begin{equation*}
	\varphi \mapsto A^v \varphi= -\varphi_t - v \varphi_x.
\end{equation*}
Indeed, because $\varphi_t,~ \varphi_x \in C([0,T]\times \Rr)$, and 
\[
	\frac{|A^v \varphi|}{1+|x|^{\zeta_1}+|v|^{\zeta_2}} \leq \|\varphi\|_{C^1} \frac{1+|v|}{1+|x|^{\zeta_1}+|v|^{\zeta_2}}\leq \|\varphi\|_{C^1} \left( 1+ \sup_{v\in \Rr} \frac{|v|}{1+|v|^{\zeta_2}}\right) \leq C \|\varphi\|_{C^1},
\]
we have $A^v \varphi \in C_\zeta(\Omega)$ and $A^v$ is bounded. Therefore, $A^v$ is a linear and bounded map. We use this map to define the following sets. Let $\mathcal{C}\subset C_\zeta(\Omega)$ be the closed subspace
\begin{align}\label{def: C set}
	\mathcal{C}=\mbox{cl}_{\| \cdot \|_{\zeta}} \left\{ \phi \in C_\zeta(\Omega):~ \phi(t,x,v)=A^v \varphi(t,x)-(v-Q(t))\eta(t)~  \mbox{ for some } \right.
	\\
	\left. \varphi \in \Lambda([0,T]\times \Rr),~ \eta \in C([0,T])\right\}, \nonumber
\end{align}
where $Q$ satisfies Assumption \ref{hyp:Q C infty}, and $\mbox{cl}_{\| \cdot \|_{\zeta}}$ denotes the closure with respect to $\| \cdot \|_{\zeta}$. Notice that $\mathcal{C}$ is convex because $A^v$ is linear. 

Given a linear and bounded operator $B:C([0,T]\times \Rr) \to \Rr$, let 
\begin{align}\label{def: U2}
	\mathcal{U}_2(B)=\mbox{cl}_{\mbox{{\tiny weak}}} \left\{ \mu \in \mathcal{U}^\zeta:~ \int_{\Omega} A^v \varphi d\mu(t,x,v) = B \varphi,~ \forall \varphi \in \Lambda([0,T]\times \Rr)\right\},
\end{align}
where $\mbox{cl}_{\mbox{{\tiny weak}}}$ denotes the closure with respect to weak convergence of measures (\cite{evansgariepy2015}, 
Definition 1.31.). The choice of the operator $B$ determines whether $\mathcal{U}_2(B)\neq\emptyset$. For instance, given $\nu_T \in \mathcal{P}(\Rr)$, for the operators
\begin{equation}\label{def: B operator}
B \varphi = \int_\Rr \varphi(0,x)dm_0(x) - \int_\Rr \varphi(T,x)d\nu_T(x),
\end{equation}
and $A^v$ as before, \eqref{def: U2} corresponds to \eqref{eq:holonomy equality}, and Remark \ref{Remark:H(m0) not empty} shows that $\mathcal{U}_2(B)\neq\emptyset$. Analogously, (see \eqref{eq:holonomy balance}) we define
\begin{align}\label{def: U3}
	\mathcal{U}_3 = \mbox{cl}_{\mbox{{\tiny weak}}}\left\{ \mu \in \mathcal{U}^\zeta:~ \int_{\Omega} \eta(t) (v-Q(t)) ~ d\mu(t,x,v) =0, ~ \forall \eta \in C([0,T]) \right\}.
\end{align}

\begin{remark}\label{remark: H equals Us intersection}
Let $B$ as in \eqref{def: B operator}. If $m_0,\nu\in\Pp(\Rr)$ are such that $\mathcal{H}(m_0,\nu)\neq \emptyset$, then 
\[
	\mathcal{H}(m_0,\nu) = \mathcal{U}_1\cap\mathcal{U}_2(B)\cap\mathcal{U}_3.
\]
To see this, notice that \eqref{def: U1}, \eqref{def: U2}, and \eqref{def: U3} imply $\mathcal{U}_1\cap\mathcal{U}_2(B)\cap\mathcal{U}_3 \subset \mathcal{H}(m_0,\nu)$. For the opposite inclusion, let $\mu\in \mathcal{H}(m_0,\nu)$ and let $A=\left\{ (t,x,v) \in \Omega:~ 1 < |x|^{\zeta_1} + |v|^{\zeta_2}\right\}$. We have that $|\mu|=\mu$ because $\mu \geq 0$. Writing $\int_{\Omega}  d\mu =\int_{A}  d\mu + \int_{A^c}  d\mu$, where $A^c$ denotes the complement of the set $A$, we see that
\[
		\int_{A}  d\mu \leq \int_{A} |x|^{\zeta_1}+|v|^{\zeta_2} d\mu \leq \int_{\Omega} |x|^{\zeta_1}+|v|^{\zeta_2} d\mu<\infty,
\]
and $\int_{A^c}  d\mu$ is finite because $A^c$ is compact and $\mu$ is a Radon measure. Hence, $\mu \in \mathcal{U}^\zeta$. Moreover, since $\mu$ satisfies \eqref{eq:holonomy equality} and \eqref{eq:holonomy balance},  we have that $\mu \in \mathcal{U}_2(B)\cap\mathcal{U}_3$. Using $\varphi(t,x)=t$ in  \eqref{eq:holonomy equality}, we obtain that $\mu \in \mathcal{U}_1$.  Therefore $\mu \in \mathcal{U}_1\cap\mathcal{U}_2(B)\cap\mathcal{U}_3$. 
\end{remark}

Now, we introduce the functionals we will use in the context of the Fenchel-Rockafellar theorem. Define $f: C_\zeta(\Omega) \to \Rr\cup\{+\infty\}$ by
\begin{align}\label{def: f map}
	f(\phi)=T \sup_{(t,x,v)\in\Omega} \left(\phi(t,x,v) - L(x,v)-vu'_T(x)\right).
\end{align}
Since $f$ is the supremum of affine functions, $f$ is convex. The following result proves continuity for this map.
\begin{lemma}\label{lem: f continuous}
Let $\zeta$ satisfy \eqref{eq: zetas selection}. Under Assumptions \ref{hyp: H convex}-
\ref{hyp: uT C1 Linfty}, the map $f$ is continuous on $C_\zeta(\Omega)$.
\end{lemma}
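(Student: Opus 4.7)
My plan is to establish the continuity of $f$ in two stages: first verify that $f$ takes finite values on $C_\zeta(\Omega)$, and then prove local Lipschitz continuity through a coercivity argument that localizes the supremum in \eqref{def: f map} to a compact subset of $\Omega$.

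For the finiteness step, I would use that any $\phi\in C_\zeta(\Omega)$ satisfies both the pointwise bound $|\phi(t,x,v)|\leq\|\phi\|_\zeta(1+|x|^{\zeta_1}+|v|^{\zeta_2})$ and the sharper asymptotic that $\phi(t,x,v)/(1+|x|^{\zeta_1}+|v|^{\zeta_2})\to 0$ uniformly in $t$ as $|(x,v)|\to\infty$. Combining this with the coercive lower bound on $L$ from \eqref{eq: L growth out of H}, the linear bound $|vu'_T(x)|\leq C|v|$ from Assumption \ref{hyp: uT C1 Linfty}, and the growth restrictions $\zeta_1\leq\gamma_1$ and $\zeta_2<\gamma_2'$ imposed by \eqref{eq: zetas selection}, the continuous function
\[
G_\phi(t,x,v):=\phi(t,x,v)-L(x,v)-vu'_T(x)
\]
tends to $-\infty$ as $|(x,v)|\to\infty$. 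Consequently its supremum over $\Omega$ is finite and attained on a compact subset of $\Omega$, so $f(\phi)<+\infty$.

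For local Lipschitz continuity at a fixed $\phi_0\in C_\zeta(\Omega)$, I would combine the previous decay bound with a stability argument. Writing $\phi=\phi_0+h$ with $\|h\|_\zeta$ small, the estimate $|h(t,x,v)|\leq\|h\|_\zeta(1+|x|^{\zeta_1}+|v|^{\zeta_2})$ shows that the coercive behaviour of $G_{\phi_0}$ is inherited by $G_{\phi_0+h}$ up to an absorbable perturbation, provided $\|h\|_\zeta$ is small relative to the decay margin of $G_{\phi_0}$. I would thus produce a compact set $K_0\subset\Omega$, depending on $\phi_0$, and $\delta=\delta(\phi_0)>0$ such that for every $h$ with $\|h\|_\zeta\leq\delta$ the supremum of $G_{\phi_0+h}$ is attained inside $K_0$. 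Denoting by $(t_i^*,x_i^*,v_i^*)\in K_0$ a maximizer of $G_{\phi_i}$, the defining inequalities $f(\phi_1)/T\geq G_{\phi_1}(t_2^*,x_2^*,v_2^*)$ and $f(\phi_2)/T=G_{\phi_2}(t_2^*,x_2^*,v_2^*)$ yield
\[
f(\phi_2)-f(\phi_1)\leq T(\phi_2-\phi_1)(t_2^*,x_2^*,v_2^*)\leq TC_0\|\phi_1-\phi_2\|_\zeta,
\]
with $C_0:=\sup_{K_0}(1+|x|^{\zeta_1}+|v|^{\zeta_2})<+\infty$; the symmetric estimate closes the argument.

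The main obstacle I foresee is the uniform localization step, especially in the borderline case $\zeta_1=\gamma_1$, where the pointwise bound on $h$ competes directly with the $C_1|x|^{\gamma_1}$ coercivity of $L$, preventing an arbitrary radius $\|h\|_\zeta$. To overcome this I would exploit the uniform limit clause in the definition \eqref{def: Czeta} of $C_\zeta(\Omega)$ at $\phi_0$, which makes the effective weight of $\phi_0$ arbitrarily small outside a large compact set, and then choose $\delta$ small enough that the combined coefficient of $|x|^{\zeta_1}$ (respectively, $|v|^{\zeta_2}$) is dominated by the coercive term of $L$. Because the result asks only for pointwise continuity, allowing both $K_0$ and $\delta$ to depend on $\phi_0$ is enough, and global uniform-in-$M$ statements are not needed.
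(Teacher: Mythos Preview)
Your proposal is correct and follows essentially the same strategy as the paper: both localize the supremum in \eqref{def: f map} to a compact subset of $\Omega$ by playing the coercive lower bound \eqref{eq: L growth out of H} on $L$ against the sub-$(\gamma_1,\gamma_2')$ growth of elements of $C_\zeta(\Omega)$, and then exploit that $\|\cdot\|_\zeta$-convergence implies uniform convergence on compacts. The paper argues sequential continuity directly (for a convergent sequence $\phi_n\to\phi$ it bounds $\|\phi_n\|_\zeta$ uniformly and finds a single compact box $[0,T]\times[-r,r]^2$ on which all the suprema are attained), while you aim for the slightly stronger local Lipschitz property and are more explicit about invoking the decay clause in \eqref{def: Czeta} to handle the borderline case $\zeta_1=\gamma_1$; this extra care is a genuine refinement, since the paper's argument relies only on the norm bound and the claim that $(L+vu'_T)/(1+|x|^{\zeta_1}+|v|^{\zeta_2})\to+\infty$, which is delicate precisely in that borderline regime.
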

\begin{proof}
Let $(\phi_n)_{n\in\Nn},\phi \in C_\zeta(\Omega)$ be such that $\lim_{n} \|\phi_n-\phi\|_\zeta=0$. The first condition in \eqref{def: Czeta} and the convergence of $\phi_n$ guarantees the existence of $C>0$ such that $\|\phi_n\|_\zeta,\|\phi\|_\zeta \leq C$ for all $n$; that is,
\[
	|\phi_n(t,x,v)|,|\phi(t,x,v)| \leq C (1+|x|^{\zeta_1}+|v|^{\zeta_2}) \quad \mbox{for all } (t,x,v)\in \Omega,~ n\in \Nn.
\]
Let $\alpha> C$. By Assumption \ref{hyp: H growth}, using \eqref{eq: L growth out of H} (see Remark \ref{remark:H growth}), we have
\[
 C_1|x|^{\gamma_1} + \dfrac{|v|^{\gamma_2'}}{\gamma_2' C^{\gamma_2' / \gamma_2}}-C \leq L(x,v), \quad \text{for all} \quad(x,v)\in \Rr^2.
\]
Adding the term $v u'_T(x)$ to both sides of the previous inequality, we get
\[
	\dfrac{1}{\gamma_2' C^{\gamma_2' / \gamma_2}}\left( \dfrac{\gamma_2' C^{\gamma_2' / \gamma_2}C_1 |x|^{\gamma_1} + |v|^{\gamma_2'} +v u_T'(x)-\gamma_2' C^{\gamma_2' / \gamma_2} C }{1+|x|^{\zeta_1}+|v|^{\zeta_2}}\right)  \leq \frac{L(x,v)+v u'_T(x)}{1+|x|^{\zeta_1}+|v|^{\zeta_2}}
\]
for all $(x,v)\in \Rr^2$. By Assumption \ref{hyp: uT C1 Linfty}, $u'_T$ is bounded. Hence, according to \eqref{eq: zetas selection}, the left-hand side of the previous expression goes to $+\infty$  when $|x|,|v| \to +\infty$. Hence, we can find $r>0$ such that $|x|,|v| \geq r$ implies 
\[
	-\frac{L(x,v)+v u'_T(x)}{1+|x|^{\zeta_1}+|v|^{\zeta_2}} \leq -\alpha.
\]
Let $(x,v) \in ([-r,r]^2)^c$, where $A^c$ denotes the complement of the set $A$, and let $t \in [0,T]$. Using the previous bound, we have
\begin{align*}
	\phi_n(t,x,v) - L(x,v) -v u'_T(x) & \leq \phi_n(t,x,v) - \alpha (1+|x|^{\zeta_1}+|v|^{\zeta_2})  
	\\
	& \leq (C-\alpha)(1+|x|^{\zeta_1}+|v|^{\zeta_2})
	\\
	& < 0,
\end{align*}
for $n\in \Nn$. Hence,
\[
	f(\phi_n) = T \sup_{(t,x,v)\in [0,T] \times [-r,r]^2} \left(\phi_n(t,x,v) - L(x,v)-vu'_T(x)\right), 
\]	
and the same holds for $\phi$. Because the convergence on $C_\zeta(\Omega)$ implies uniform convergence on $[0,T]\times [-r,r]^2$, we obtain
\[
	f(\phi_n) \to f(\phi). \qedhere
\]	
\end{proof}
\begin{proposition}\label{Lemma5.1. alike}
Let $\zeta$ satisfy \eqref{eq: zetas selection}. Suppose that Assumptions \ref{hyp: H convex}-
\ref{hyp: uT C1 Linfty} hold. Let $\mu \in \mathcal{U}^\zeta$. If $\mu \ngeq 0$ then $f^*(\mu) = +\infty$.
\end{proposition}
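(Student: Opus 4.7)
My plan is to argue directly from
\begin{equation*}
  f^*(\mu) = \sup_{\phi \in C_\zeta(\Omega)} \Big( \int_\Omega \phi \, d\mu - f(\phi) \Big)
\end{equation*}
by exhibiting a one-parameter family of non-positive test functions $\phi_\lambda \in C_\zeta(\Omega)$ along which the pairing diverges to $+\infty$. The starting point will be to produce a non-negative continuous function on which $\mu$ evaluates negatively. Since $\mu \in \mathcal{U}^\zeta$, the total variation $|\mu|$ is a finite Radon measure, so the Jordan decomposition $\mu = \mu^+ - \mu^-$ consists of two finite positive Radon measures, and the hypothesis $\mu \ngeq 0$ forces $\mu^- \not\equiv 0$. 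Using the Hahn decomposition together with inner regularity, I would extract a compact set $K$ with $\mu(K) < 0$; by outer regularity I would then choose an open $K' \supset K$ with $|\mu|(K' \setminus K) < |\mu(K)|/2$; and Urysohn's lemma would provide $\psi \in C_c(\Omega)$ with $0 \leq \psi \leq 1$, $\psi \equiv 1$ on $K$, and $\supp \psi \subset K'$. Such a $\psi$ lies in $C_\zeta(\Omega)$ and satisfies $c := -\int_\Omega \psi \, d\mu \geq |\mu(K)| - |\mu|(K' \setminus K) > 0$.

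Next I would set $\phi_\lambda := -\lambda \psi$ for $\lambda > 0$; this is a function in $C_\zeta(\Omega)$ with $\int_\Omega \phi_\lambda \, d\mu = \lambda c$. Since $\phi_\lambda \leq 0$ pointwise, the supremum defining $f(\phi_\lambda)$ is controlled by
\begin{equation*}
  f(\phi_\lambda) \leq T \sup_{(t,x,v) \in \Omega} \bigl( -L(x,v) - v u'_T(x) \bigr).
\end{equation*}
Combining the lower growth bound \eqref{eq: L growth out of H} on $L$ (see Remark \ref{remark:H growth}) with the boundedness of $u'_T$ from Assumption \ref{hyp: uT C1 Linfty} and the fact that $\gamma_2' > 1$, the map $(x,v) \mapsto L(x,v) + v u'_T(x)$ is bounded below by a constant on $\Rr^2$. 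Hence $f(\phi_\lambda) \leq C_0 T$ uniformly in $\lambda$, so
\begin{equation*}
  f^*(\mu) \geq \lambda c - C_0 T,
\end{equation*}
and sending $\lambda \to +\infty$ yields $f^*(\mu) = +\infty$.

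The only mildly delicate point will be the cutoff construction: one must ensure that enlarging $K$ to a neighbourhood on which a continuous cutoff can be built does not erase the negativity $\mu(K) < 0$. This is taken care of by combining the Hahn decomposition with inner regularity of $\mu^-$ and outer regularity of $|\mu|$, both standard for finite Radon measures; after that, the rest is a direct application of the coercivity of $L$.
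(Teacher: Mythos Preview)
Your proof is correct and follows essentially the same strategy as the paper: produce a nonnegative $\psi\in C_\zeta(\Omega)$ with $\int_\Omega \psi\,d\mu<0$, scale by $-\lambda$, and use the coercivity of $L$ to bound $f$ from above uniformly in $\lambda$. The only tactical differences are that the paper obtains $\psi$ in one line by invoking the duality $C_\zeta(\Omega)^*\cong\mathcal{U}^\zeta$ from Remark~\ref{remark:Czeta dual} (whereas you build it explicitly via Hahn decomposition, regularity, and Urysohn), and the paper shifts its test functions by $vu'_T$ so that only the lower bound $L\geq -C$ is needed, while you absorb the $vu'_T$ term into the lower bound of $L+vu'_T$ using $\gamma_2'>1$; both are equally valid and lead to the same conclusion.
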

\begin{proof}
Let $\mu \in \mathcal{U}^\zeta$ be such that $\mu \ngeq 0$. Regarding $\mu$ as a linear map, by Remark \ref{remark:Czeta dual}, there exists $\phi \in C_\zeta(\Omega)$ such that $0\leq \phi$ and $\int_{\Omega} \phi(t,x,v)d\mu <0$. Let $\phi_n=-n \phi$, for $n\in\Nn$. Thus, the sequence $(\phi_n)_{n\in\Nn}$ in $ C_\zeta(\Omega)$ satisfies
\begin{equation}\label{eq: Aux transform f 1}
	\phi_n \leq 0 \quad \mbox{and} \quad \int_{\Omega} \phi_n d\mu \to +\infty.
\end{equation}
Let $\tilde{\phi}_n = \phi_n + vu'_T$, for $n\in\Nn$. By Assumption \ref{hyp: uT C1 Linfty}, we have $v u'_T \in C_\zeta(\Omega)$. Therefore, $\tilde{\phi}_n \in C_\zeta(\Omega)$ for $n\in\Nn$. Moreover, $\int_{\Omega} \tilde{\phi}_n d\mu \to +\infty$ as well.  From \eqref{def: f map} we get
\[
	f(\tilde{\phi}_n) = T \sup_{(t,x,v)\in\Omega} \left( \phi_n - L \right).
\]
By Assumption \ref{hyp: H growth}, using \eqref{eq: L growth out of H} (see Remark \ref{remark:H growth}) and the first condition in \eqref{eq: Aux transform f 1}, we get
\[
	\phi_n(t,x,v) - L(x,v) \leq -C_1|x|^{\gamma_1}-\dfrac{|v|^{\gamma_2'}}{\gamma_2' C^{\gamma_2'/\gamma_2}}+C \leq C.
\]
Thus, $f(\tilde{\phi}_n) \leq  T C$. Hence, we conclude that
\[
	+\infty = \lim_{n} \int_{\Omega} \tilde{\phi}_n d\mu - TC \leq \lim_{n} \int_{\Omega} \tilde{\phi}_n d\mu - f(\tilde{\phi}_n) \leq f^*(\mu). \qedhere
\]
\end{proof}
\begin{proposition}\label{Lemma 5.2 alike}
Let $\zeta$ satisfy \eqref{eq: zetas selection}. Suppose that Assumptions \ref{hyp: H convex}-
\ref{hyp: uT C1 Linfty} hold. Let $\mu \in \mathcal{U}^\zeta$. If $\mu \geq 0$, then 
\[
	f^*(\mu) \geq \int_{\Omega} L + v u'_T ~ d\mu + \sup_{\psi \in C_\zeta(\Omega)} \left( \int_{\Omega} \psi d\mu - T \sup_{\Omega} \psi \right).
\]
\end{proposition}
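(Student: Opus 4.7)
The plan is to unwind the definition $f^*(\mu) = \sup_{\phi \in C_\zeta(\Omega)}\bigl(\int_\Omega \phi\,d\mu - T\sup_\Omega(\phi - L - v u_T')\bigr)$ and exhibit, for each $\psi \in C_\zeta(\Omega)$, an admissible test function $\phi$ yielding the desired lower bound. Heuristically, the natural choice is $\phi = \psi + L(x,v) + v u_T'(x)$, because then $\phi - L - v u_T' = \psi$, so $f(\phi) = T\sup_\Omega \psi$ and $\int_\Omega \phi\,d\mu - f(\phi)$ equals precisely the $\psi$-dependent part of the claimed bound plus $\int_\Omega (L + v u_T')\,d\mu$. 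The obstruction is that, under Assumption \ref{hyp: H growth} and \eqref{eq: L growth out of H}, $L$ grows like $|x|^{\gamma_1} + |v|^{\gamma_2'}$, whereas $C_\zeta(\Omega)$ only accommodates growth of order $|x|^{\zeta_1} + |v|^{\zeta_2}$ with $\zeta_1 \leq \gamma_1$ and $\zeta_2 < \gamma_2'$. So $L + v u_T' \notin C_\zeta(\Omega)$ and this naive $\phi$ is not admissible.

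To repair this, I would truncate from above. Combining the lower bound in \eqref{eq: L growth out of H} with $|u_T'| \leq C$ from Assumption \ref{hyp: uT C1 Linfty} and Young's inequality (valid since $\gamma_2' > 1$), there exists a constant $C' \geq 0$ such that $L(x,v) + v u_T'(x) \geq -C'$ on $\Omega$. Define
\[
L_n(x,v) := \min\bigl(L(x,v) + v u_T'(x),\, n\bigr).
\]
Since $L$ is continuous (as the Legendre transform of the uniformly convex $H$) and $v u_T'(x)$ is continuous, $L_n$ is continuous, and the bounds $-C' \leq L_n \leq n$ imply $L_n \in C_\zeta(\Omega)$. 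Therefore, for any fixed $\psi \in C_\zeta(\Omega)$, the function $\phi_n := \psi + L_n$ belongs to $C_\zeta(\Omega)$ and is a legitimate test function for $f^*(\mu)$.

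With this choice, the bound $L_n \leq L + v u_T'$ gives $f(\phi_n) = T\sup_\Omega(\psi + L_n - L - v u_T') \leq T\sup_\Omega \psi$, so
\[
f^*(\mu) \geq \int_\Omega \phi_n\,d\mu - f(\phi_n) \geq \int_\Omega \psi\,d\mu + \int_\Omega L_n\,d\mu - T\sup_\Omega \psi.
\]
The final step is to pass to the limit $n \to \infty$. Since $L_n + C' \geq 0$ is an increasing sequence converging pointwise to $L + v u_T' + C'$, and $\mu \geq 0$ has total mass $T$ (cf.\ \eqref{def: U1}), the Monotone Convergence Theorem yields $\int_\Omega L_n\,d\mu \to \int_\Omega (L + v u_T')\,d\mu$, with the limit possibly $+\infty$; in the latter case the statement holds trivially because $f^*(\mu) = +\infty$. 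Taking the supremum over $\psi \in C_\zeta(\Omega)$ on both sides then produces the claim. The main obstacle is precisely this truncation-and-limit step, which requires both the lower bound on $L + v u_T'$ to apply MCT and the boundedness of $L_n$ to place the truncations inside $C_\zeta(\Omega)$.
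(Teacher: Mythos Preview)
Your proposal is correct and follows essentially the same route as the paper: truncate the Lagrangian term to land in $C_\zeta(\Omega)$, use $\phi = \psi + (\text{truncation})$ as a test function so that $f(\phi) \leq T\sup_\Omega\psi$, and pass to the limit via monotone convergence. The paper truncates $L$ alone (after shifting it to be nonnegative) and keeps $vu_T'$ separate, whereas you truncate $L+vu_T'$ together; both variants work for the same reason. One small inaccuracy: you write that $\mu$ ``has total mass $T$ (cf.\ \eqref{def: U1})'', but the proposition only assumes $\mu\in\mathcal{U}^\zeta$ with $\mu\geq 0$, not $\mu\in\mathcal{U}_1$. This does not affect your argument, since finiteness of $\mu(\Omega)$ (which is all you need to subtract the constant $C'$ after applying MCT) already follows from $\mu\in\mathcal{U}^\zeta$.
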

\begin{proof}
From \eqref{eq: L growth out of H}, we can add a constant $C$ to $L$ and assume that $0\leq L$. Using Remark \ref{remark:H growth}, let $L_n$ be a sequence in $C_\zeta(\Omega)$ such that $0\leq L_n \leq L_{n+1}\leq L$ and $L_n \to L$ pointwise. Fix $n\in \Nn$, $\phi \in C_\zeta(\Omega)$ and let $\psi = \phi - vu'_T -L_n \in C_\zeta(\Omega)$. Then
\begin{align*}
	\int_{\Omega} \phi d\mu - f(\phi) & = \int_{\Omega} \left( \psi + L_n + vu'_T \right) d\mu - T\sup_{\Omega} \left( \psi + L_n - L \right) 
\\
& \geq \int_{\Omega} \left( \psi + L_n + vu'_T \right) d\mu  - T\sup_{\Omega} \psi.
\end{align*}
By the Monotone Convergence Theorem, we have $\int_{\Omega} L_n d\mu \to \int_{\Omega} L d\mu$. Therefore,
\begin{align*}
	f^*(\mu) = \sup_{\phi \in C_\zeta (\Omega)} \int_{\Omega} \phi d\mu - f(\phi) & \geq \int_{\Omega} L + vu'_T ~ d\mu + \sup_{\psi \in C_\zeta (\Omega)} \left( \int_{\Omega} \psi ~ d\mu - T \sup_{\Omega} \psi\right). \qedhere
\end{align*}
\end{proof}
\begin{proposition}\label{Prop: f transform}
Let $\zeta$ satisfy \eqref{eq: zetas selection}. Suppose that Assumptions \ref{hyp: H convex}-
\ref{hyp: uT C1 Linfty} hold. Let $f$ be as in \eqref{def: f map} and let $f^*$ be its Legendre transform; that is,
\[
	f^*(\mu) = \sup_{\phi \in C_\zeta (\Omega)} \left( \int_\Omega \phi ~ d\mu - f(\phi) \right).
\]
Then,
\[
	f^*(\mu)=
	\begin{cases}
	 \int_\Omega L+vu'_T ~ d\mu & \mu \in \mathcal{U}_1
	\\ 
	+\infty & \mbox{otherwise}
	\end{cases}.
\]	
\end{proposition}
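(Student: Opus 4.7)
The plan is to split the analysis into three cases according to the sign of $\mu$ and its total mass, using the two preceding propositions (Lemma~5.1~alike and Lemma~5.2~alike) as the main inputs.

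First, if $\mu \ngeq 0$, then Proposition \ref{Lemma5.1. alike} yields $f^*(\mu) = +\infty$ directly, which matches the ``otherwise'' branch. So I may restrict attention to $\mu \geq 0$.

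Second, suppose $\mu \geq 0$ but $\int_\Omega d\mu \neq T$. The idea is to exploit the supremum appearing in Proposition \ref{Lemma 5.2 alike} by plugging in the family of constant functions $\psi \equiv \alpha$ for $\alpha \in \Rr$; these lie in $C_\zeta(\Omega)$ since their decay against the weight $1+|x|^{\zeta_1}+|v|^{\zeta_2}$ is automatic. For such $\psi$ one computes $\int_\Omega \psi \, d\mu - T\sup_\Omega \psi = \alpha\bigl(\int_\Omega d\mu - T\bigr)$. Choosing the sign of $\alpha$ opposite to that of $T - \int_\Omega d\mu$ and letting $|\alpha|\to \infty$ makes this quantity tend to $+\infty$. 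Since $L+vu'_T$ is bounded below on $\Omega$ (by the lower bound in \eqref{eq: L growth out of H} and Assumption \ref{hyp: uT C1 Linfty}, noting $|vu'_T|\leq C(1+|v|^{\zeta_2})$ is $\mu$-integrable), the term $\int_\Omega L + vu'_T\, d\mu$ lies in $(-\infty, +\infty]$, and hence Proposition \ref{Lemma 5.2 alike} forces $f^*(\mu) = +\infty$.

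Third, assume $\mu \in \mathcal{U}_1$, i.e., $\mu \geq 0$ and $\int_\Omega d\mu = T$. For the lower bound, take $\psi = 0$ in Proposition \ref{Lemma 5.2 alike} to get $f^*(\mu) \geq \int_\Omega L + vu'_T \, d\mu$. For the upper bound, fix any $\phi \in C_\zeta(\Omega)$ and set $c := \sup_\Omega(\phi - L - vu'_T) \in \Rr\cup\{+\infty\}$. If $c = +\infty$, then $f(\phi) = +\infty$, so $\int_\Omega \phi\, d\mu - f(\phi) = -\infty$ and the inequality is vacuous. Otherwise $\phi \leq L + vu'_T + c$ pointwise on $\Omega$; integrating against $\mu \geq 0$ and using $\int_\Omega d\mu = T$ gives
\[
\int_\Omega \phi \, d\mu \leq \int_\Omega (L + vu'_T) \, d\mu + cT,
\]
so $\int_\Omega \phi \, d\mu - f(\phi) = \int_\Omega \phi\, d\mu - Tc \leq \int_\Omega L + vu'_T \, d\mu$. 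Taking the supremum over $\phi \in C_\zeta(\Omega)$ yields $f^*(\mu) \leq \int_\Omega L + vu'_T\, d\mu$, completing the equality in the $\mu \in \mathcal{U}_1$ case (which correctly degenerates to $+\infty = +\infty$ when $\int_\Omega L + vu'_T\, d\mu$ is infinite).

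The only delicate point is ensuring the manipulations with $\int_\Omega L + vu'_T\, d\mu$ are legitimate in extended arithmetic: this is handled by the lower bound in \eqref{eq: L growth out of H} together with the bound $|vu'_T|\leq C(1+|v|^{\zeta_2})$, which place the integrand's negative part in $L^1(\mu)$ for every $\mu \in \mathcal{U}^\zeta$. With that in hand, the three cases above assemble into the stated piecewise formula for $f^*$.
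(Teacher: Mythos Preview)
Your proof is correct and follows essentially the same three-case structure as the paper: invoking Proposition~\ref{Lemma5.1. alike} for $\mu\ngeq 0$, using constant test functions to force $f^*(\mu)=+\infty$ when $\int_\Omega d\mu\neq T$, and combining the pointwise bound $\phi\leq L+vu_T'+c$ with $\psi\equiv 0$ in Proposition~\ref{Lemma 5.2 alike} for $\mu\in\mathcal{U}_1$. The only cosmetic difference is that in the second case the paper plugs the test function $\phi_\alpha=\alpha+vu_T'-C$ directly into the definition of $f^*$, whereas you route through the inequality of Proposition~\ref{Lemma 5.2 alike} with $\psi\equiv\alpha$; both arguments amount to the same computation.
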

\begin{proof}
By Proposition \ref{Lemma5.1. alike}, if $\mu\ngeq 0 $ then $f^*(\mu)=+\infty$.  Let $\mu \geq 0$. If $\mu \not\in \mathcal{U}_1$, by definition, $\int_{\Omega} d\mu \neq T$ (see \eqref{def: U1}). Define $\phi_\alpha = \alpha + vu'_T - C \in C_\zeta(\Omega)$, where $\alpha \in \Rr$ and $C$ is given by Assumption \ref{hyp: H growth}. Then, by \eqref{eq: L growth out of H}, we obtain
\[	
	f(\phi_\alpha) = T\sup_{\Omega} \left( \alpha - C-L\right) \leq T\alpha.
\]
Adding $\alpha \int_{\Omega} d\mu$ and rearranging the previous expression, we get
\[	
	\alpha \int_{\Omega} d\mu- T\alpha \leq \alpha \int_{\Omega} d\mu-f(\phi_\alpha),
\]
which implies that
\[
	 \left( \int_{\Omega} d\mu -T\right) \sup_{\alpha \in \Rr} \alpha \leq \sup_{\alpha \in \Rr} \int_{\Omega} \alpha d\mu - f(\phi_\alpha) \leq f^*(\mu).
\]
From the preceding inequality, we conclude that $f^*(\mu) = +\infty$. On the other hand, if $\mu \in \mathcal{U}_1$, by definition,  $\int_{\Omega} d\mu = T$. For any $\phi \in C_\zeta(\Omega)$, we have
\[
	\int_{\Omega} \phi - L - vu'_T ~ d\mu \leq 	\int_{\Omega} \sup_{\Omega} \left( \phi - L - vu'_T\right) ~ d\mu  = T \sup_{\Omega} \left( \phi - L - vu'_T\right) = f(\phi).
\]
Rearranging the previous inequality, we obtain
\[
	\int_{\Omega} \phi d\mu - f(\phi) \leq \int_{\Omega} L + vu'_T~ d\mu,
\]
and we conclude that $f^*(\mu) \leq \int_{\Omega} L + vu'_T~ d\mu$. 
Finally, we take $\psi \equiv 0$ in Proposition \ref{Lemma 5.2 alike} to obtain $f^*(\mu) \geq \int_{\Omega} L + vu'_T~ d\mu$. The result follows.
\end{proof}
Now, we define the second functional we use in the Fenchel-Rockafellar theorem. Recall the definition of $\mathcal{C}$ in \eqref{def: C set}. Fix (see \eqref{def: U2} and \eqref{def: U3})
\begin{equation}\label{eq: mu bar selection}
\overline{\mu} \in \mathcal{U}_2(B) \cap \mathcal{U}_3.
\end{equation}
Define $g: C_\zeta(\Omega) \to \Rr\cup \{+\infty\}$ as 
\begin{equation}\label{def: g map}
g(\phi)=
\begin{cases} 
\displaystyle -\int_{\Omega} \phi d\overline{\mu}, & \phi \in \mathcal{C} \\ 
+\infty, & \mbox{otherwise}.
 \end{cases}
\end{equation}
\begin{proposition}\label{Prop: g transform}
Let $\zeta$ satisfy \eqref{eq: zetas selection}. Suppose that Assumption \ref{hyp:Q C infty} holds. Assume that $B:C([0,T]\times \Rr) \to \Rr$ is a linear and bounded operator such that $\mathcal{U}_2(B) \cap \mathcal{U}_3 \neq \emptyset$. Then
\[
	g^*(\mu) = 
	\begin{cases} 0, & -\mu \in \mathcal{U}_2(B) \cap \mathcal{U}_3 \\ 
	+\infty, & \mbox{otherwise}.
	\end{cases}
\]
\end{proposition}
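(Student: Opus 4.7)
The plan is to compute $g^*(\mu)$ directly from the definition. Since $g(\phi) = -\int_\Omega \phi\, d\overline{\mu}$ on $\mathcal{C}$ and equals $+\infty$ elsewhere, one has
\begin{equation*}
g^*(\mu) = \sup_{\phi \in C_\zeta(\Omega)} \Big( \int_\Omega \phi\, d\mu - g(\phi) \Big) = \sup_{\phi \in \mathcal{C}} \int_\Omega \phi\, d(\mu + \overline{\mu}).
\end{equation*}
The key structural observation is that $\mathcal{C}$ is a linear subspace of $C_\zeta(\Omega)$: the generating set $\{A^v\varphi - (v-Q)\eta : \varphi \in \Lambda,\ \eta \in C([0,T])\}$ is closed under addition and scalar multiplication because $A^v$ is linear and $\varphi,\eta$ range over vector spaces, and the $\|\cdot\|_\zeta$-closure preserves this. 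Since $\mu + \overline{\mu}$ defines a continuous linear functional on $C_\zeta(\Omega)$ by Remark \ref{remark:Czeta dual}, the supremum above takes only the values $0$, when $\mu + \overline{\mu}$ annihilates $\mathcal{C}$, or $+\infty$, by rescaling a test function on which the functional is nonzero.

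Next, I would translate the annihilation of $\mathcal{C}$ by $\mu + \overline{\mu}$ into the two conditions defining $\mathcal{U}_2(B)$ and $\mathcal{U}_3$. By continuity, annihilation on $\mathcal{C}$ is equivalent to annihilation on the dense generating set, and testing with $\eta \equiv 0$ and $\varphi \equiv 0$ in turn decouples the conditions into
\begin{equation*}
\int_\Omega A^v\varphi\, d(\mu+\overline{\mu}) = 0 \quad \forall \varphi \in \Lambda, \qquad \int_\Omega \eta(t)(v-Q(t))\, d(\mu+\overline{\mu}) = 0 \quad \forall \eta \in C([0,T]).
\end{equation*}
Using that $\overline{\mu}$ satisfies the defining equations of $\mathcal{U}_2(B)$ and $\mathcal{U}_3$, these rewrite as $\int_\Omega A^v\varphi\, d(-\mu) = B\varphi$ and $\int_\Omega \eta(v-Q)\, d(-\mu) = 0$, which characterize $-\mu \in \mathcal{U}_2(B) \cap \mathcal{U}_3$; conversely, given such membership one recovers annihilation on the generating set, hence on $\mathcal{C}$.

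The main obstacle is handling the weak-$\ast$ closures in the definitions of $\mathcal{U}_2(B)$ and $\mathcal{U}_3$, which enter twice: one needs $\overline{\mu}$ itself to satisfy the defining equations (not merely to lie in the closure), and one needs the defining equations for $-\mu$ to imply membership in the closure. Both are resolved by observing that the defining sets are already weak-$\ast$ closed in $\mathcal{U}^\zeta = C_\zeta(\Omega)^*$: for each $\varphi \in \Lambda$ the function $A^v\varphi = -\varphi_t - v\varphi_x$ belongs to $C_\zeta(\Omega)$ (as shown just before the definition of $\mathcal{C}$), and for each $\eta \in C([0,T])$ the function $(t,x,v)\mapsto \eta(t)(v-Q(t))$ lies in $C_\zeta(\Omega)$ thanks to $\zeta_2 > 1$ and Assumption \ref{hyp:Q C infty}, so the corresponding linear conditions are preserved under weak-$\ast$ limits in $\mathcal{U}^\zeta$. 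Once this point is settled, the remainder is the formal dichotomy for the supremum of a continuous linear functional over a linear subspace.
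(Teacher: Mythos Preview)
Your proposal is correct and follows essentially the same approach as the paper: both reduce $g^*(\mu)$ to $\sup_{\phi\in\mathcal{C}}\int_\Omega \phi\,d(\mu+\overline{\mu})$ and exploit that $\mathcal{C}$ is a linear subspace, so the supremum is $0$ or $+\infty$ according to whether $\mu+\overline{\mu}$ annihilates $\mathcal{C}$; the paper carries this out by cases with explicit rescalings, while you phrase it as the general dichotomy for a continuous linear functional on a subspace. Your explicit discussion of why the weak closures in the definitions of $\mathcal{U}_2(B)$ and $\mathcal{U}_3$ add nothing (because $A^v\varphi$ and $\eta(t)(v-Q(t))$ lie in $C_\zeta(\Omega)$, so the defining equalities are preserved under weak-$*$ limits in $\mathcal{U}^\zeta$) is a point the paper uses implicitly---both for $\overline{\mu}$ and for the implication ``$-\mu\in\mathcal{U}_2(B)\cap\mathcal{U}_3$ $\Rightarrow$ $-\mu$ satisfies the defining equations''---but does not spell out.
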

\begin{proof}
Let $\mu \in \mathcal{U}^\zeta$ and define $\hat{\mu}=\mu + \overline{\mu} ~ \in \mathcal{R}(\Omega)$. 

Assume that $-\mu \in \mathcal{U}_2(B) \cap \mathcal{U}_3$. Then, $\hat{\mu}$ satisfies 
\[
	\int_{\Omega} A^v\varphi - (v-Q(t))\eta(t) ~ d\hat{\mu}=0
\]
for all $\varphi \in \Lambda ([0,T]\times\Rr)$ and $\eta \in C([0,T])$. Because $\hat{\mu}$ defines a linear and bounded functional on $C_\zeta(\Omega)$ (see Section \ref{remark:Czeta dual}), the continuity under $\|\cdot\|_{\zeta}$ guarantees that $\int_{\Omega} \phi ~d\hat{\mu}=0$ for all $\phi \in \mathcal{C}$; that is, $\int_{\Omega} \phi ~d\mu = -\int_{\Omega} \phi ~ d\overline{\mu}=g(\phi)$ for all $\phi \in \mathcal{C}$. Hence,
\[
	g^*(\mu)=\sup_{\phi \in C_\zeta(\Omega)} \left( \int_{\Omega} \phi~ d\mu - g(\phi) \right) =\sup_{\phi \in \mathcal{C}} \left( \int_{\Omega} \phi~ d\mu - g(\phi) \right) =0.
\]
Now, assume that $-\mu \not\in \mathcal{U}_2(B) \cap \mathcal{U}_3$. Then, either $-\mu \not\in \mathcal{U}_2(B)$ or $-\mu \not\in \mathcal{U}_3$. In the first alternative, there exists $\varphi \in \Lambda([0,T]\times \Rr)$ such that 
\[
	-\int_{\Omega} A^v \varphi d\mu(x,q,s) \neq B \varphi,
\]
we have
\[
	\int_{\Omega} A^v \varphi d\hat{\mu} =\int_{\Omega} A^v \varphi d\mu+\int_{\Omega} A^v \varphi d\overline{\mu} \neq 0.
\]	
Define $\hat{\phi} = A^v \varphi$. Then $\hat{\phi} \in \mathcal{C}$ and 
satisfies $\int_{\Omega} \hat{\phi}~ d\hat{\mu} \neq 0$, and using \eqref{def: g map}, we obtain
\[
	\sup_{\phi \in C_\zeta(\Omega)} \left( \int_{\Omega} \phi ~d\mu - g(\phi) \right)= \sup_{\phi \in \mathcal{C}} \left( \int_{\Omega} \phi ~d\mu - g(\phi) \right) \geq \int_{\Omega} \hat{\phi} ~d\mu - g(\hat{\phi})= \int_{\Omega} \hat{\phi}~ d\hat{\mu}.
\]
Let $\alpha_n = n~\mbox{sgn}\left(\int_{\Omega} \hat{\phi}~ d\hat{\mu}\right)$, where $\mbox{sgn}(\cdot)$ denotes the sign function, and $\hat{\phi}_n = \alpha_n A^v  \varphi$, for $n\in \Nn$. Because $\alpha_n \varphi$ is a sequence in $\Lambda([0,T]\times\Rr)$, $\hat{\phi}_n$ is a sequence in $\mathcal{C}$. Furthermore, the previous inequality implies
\[
	g^*(\mu) \geq n ~\mbox{sgn}\left(\int_{\Omega} \hat{\phi}~ d\hat{\mu}\right) \int_{\Omega} \hat{\phi}~ d\hat{\mu}
\]
for all $n \in \Nn$. Hence $g^*(\mu)=+\infty$.

In the second alternative, there exists $\eta  \in C([0,T])$ such that 
\[
	\int_{\Omega} \eta (t) (v-Q(t)) ~d\mu \neq 0,
\]
we have 
\[
	\int_{\Omega} \eta (v-Q) \varphi d\hat{\mu}=\int_{\Omega} \eta (v-Q) \varphi d\mu \neq 0.
\]
 Define $\hat{\phi} = -(v-Q)\eta$. Then $\hat{\phi}  \in \mathcal{C}$ and
satisfies $\int_{\Omega} \hat{\phi}~ d\hat{\mu} \neq 0$. Proceeding as before, we obtain $g^*(\mu)=+\infty$.
\end{proof}
\begin{theorem}\label{thm: 5.5}
Let $\zeta$ satisfy \eqref{eq: zetas selection}. Suppose that Assumptions \ref{hyp: H convex}-
\ref{hyp:Q C infty} hold. Assume that $B:C([0,T]\times \Rr) \to \Rr$ is a linear and bounded operator such that $\mathcal{U}_2(B) \cap \mathcal{U}_3 \neq \emptyset$. Then
\begin{align*}
	& \inf_{\substack{\varphi , \eta }} \left(  T \sup_{(t,x)} \bigg( -\varphi_t + Q\eta + H(x,\varphi_x + \eta + u'_T)\bigg) -B \varphi \right) 
= \max_{\mu} \left( -\int_{\Omega} L  + vu'_T~ d\mu \right),
\end{align*}
where the supremum is taken over $(t,x) \in [0,T] \times \Rr$, the infimum is taken over $\varphi \in \Lambda([0,T]\times\Rr)$, $\eta \in C([0,T])$, and the maximum is taken on $ \mu \in \mathcal{U}_1\cap \mathcal{U}_2(B) \cap \mathcal{U}_3$.
\end{theorem}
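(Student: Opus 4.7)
The plan is to apply the Fenchel--Rockafellar theorem (Theorem \ref{Thm: Fenchel-Rockafellar}) on the normed vector space $E = C_\zeta(\Omega)$ with $E^* = \mathcal{U}^\zeta$ (see Remark \ref{remark:Czeta dual}), taking the convex functionals $f$ from \eqref{def: f map} and $g$ from \eqref{def: g map}. Convexity of $f$ is immediate since it is a supremum of affine functions, and $g$ is convex because $\mathcal{C}$ is convex (as the $\|\cdot\|_\zeta$-closure of a linear subspace) and $\phi\mapsto-\int_\Omega \phi\,d\overline{\mu}$ is linear. Continuity of $f$ is Lemma \ref{lem: f continuous}. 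To meet the compatibility hypothesis, I would take $\phi_0 \equiv 0 \in \mathcal{C}$: clearly $g(0)=0$, and $f(0) = T\sup_\Omega(-L-vu'_T) < +\infty$ thanks to Assumption \ref{hyp: uT C1 Linfty} and the lower bound in \eqref{eq: L growth out of H} together with $\zeta_2 < \gamma'_2$, which imply $L(x,v)+vu'_T(x)\to +\infty$ as $|x|,|v|\to\infty$.

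Once Fenchel--Rockafellar applies, I would identify both sides of
\[
\inf_{\phi\in C_\zeta(\Omega)} \bigl( f(\phi)+g(\phi) \bigr) = \max_{\mu\in\mathcal{U}^\zeta} \bigl( -f^*(-\mu)-g^*(\mu)\bigr)
\]
with the two sides of the theorem. For the right-hand side, Proposition \ref{Prop: f transform} gives $-f^*(-\mu) = -\int_\Omega (L+vu'_T)\,d(-\mu) = \int_\Omega(L+vu'_T)\,d\mu$ whenever $-\mu\in\mathcal{U}_1$ and $+\infty$ otherwise, while Proposition \ref{Prop: g transform} gives $g^*(\mu) = 0$ whenever $-\mu\in\mathcal{U}_2(B)\cap\mathcal{U}_3$ and $+\infty$ otherwise. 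The substitution $\sigma=-\mu$ then rewrites the right-hand side as $\max_{\sigma} \bigl(-\int_\Omega(L+vu'_T)\,d\sigma\bigr)$ with $\sigma\in\mathcal{U}_1\cap\mathcal{U}_2(B)\cap\mathcal{U}_3$, matching the target.

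For the left-hand side, since $g\equiv +\infty$ off $\mathcal{C}$, the infimum reduces to $\inf_{\phi\in\mathcal{C}}\bigl(f(\phi)-\int_\Omega\phi\,d\overline{\mu}\bigr)$. Both terms are $\|\cdot\|_\zeta$-continuous (the linear term by $\overline{\mu}\in\mathcal{U}^\zeta=C_\zeta(\Omega)^*$, $f$ by Lemma \ref{lem: f continuous}), so the infimum over the closure equals the infimum over the dense subspace of $\phi$ of the form $\phi(t,x,v)=A^v\varphi(t,x)-(v-Q(t))\eta(t)$ with $\varphi\in\Lambda([0,T]\times\Rr)$, $\eta\in C([0,T])$. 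For such $\phi$, I would perform the sup in $v$ first at fixed $(t,x)$:
\[
\sup_{v\in\Rr}\bigl( -\varphi_t - v\varphi_x - (v-Q)\eta - L(x,v) - vu'_T \bigr) = -\varphi_t + Q\eta + H\bigl(x,\varphi_x+\eta+u'_T\bigr),
\]
which is the Legendre transform identity \eqref{eq:Legendre transform} applied at the point $\varphi_x+\eta+u'_T$. Multiplying by $T$ produces the $T\sup_{(t,x)}(\cdots)$ term. For the linear part, $\overline{\mu}\in\mathcal{U}_2(B)\cap\mathcal{U}_3$ yields
\[
-\!\int_\Omega \bigl(A^v\varphi - (v-Q)\eta\bigr)\,d\overline{\mu} = -B\varphi + 0 = -B\varphi,
\]
so the infimum becomes $\inf_{\varphi,\eta}\bigl(T\sup_{(t,x)}(-\varphi_t+Q\eta+H(x,\varphi_x+\eta+u'_T))-B\varphi\bigr)$, exactly the left-hand side of the statement.

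The main obstacle is ensuring that the sup over $v\in\Rr$ is actually attained (so that the Legendre duality produces $H$ cleanly rather than just an inequality), and that passing from the closure $\mathcal{C}$ back to the dense subspace of differences $A^v\varphi-(v-Q)\eta$ is legitimate; both are handled by the continuity of $f+g$ on $\mathcal{C}$ and the coercivity provided by \eqref{eq: L growth out of H} combined with $\zeta_2<\gamma'_2$, which forces the optimizer in $v$ to be achieved in a compact set depending on $(\varphi_x+\eta+u'_T)(t,x)$.
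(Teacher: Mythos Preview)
Your proposal is correct and follows essentially the same route as the paper: apply Fenchel--Rockafellar on $C_\zeta(\Omega)$ with the functionals $f,g$, invoke Propositions \ref{Prop: f transform} and \ref{Prop: g transform} to identify the dual side, and unfold the primal side via the Legendre identity $H(x,p)=\sup_v(-pv-L(x,v))$. The only minor difference is the choice of compatibility point: you take $\phi_0\equiv 0$ (which works, since $0\in\mathcal{C}$ and the lower bound in \eqref{eq: L growth out of H} plus Assumption \ref{hyp: uT C1 Linfty} force $-L-vu'_T$ bounded above), whereas the paper takes $\phi_0=A^v(Ct-u_T)=-C+vu'_T$; both are valid. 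Your worry at the end about the sup in $v$ being attained is unnecessary---by Assumption \ref{hyp: H convex} and the involutivity of the Legendre transform, $\sup_v(-pv-L(x,v))=H(x,p)$ holds exactly, attained or not.
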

\begin{proof}
Recall that $f$ is convex, and by Lemma \ref{lem: f continuous}, $f$ is continuous on $C_\zeta(\Omega)$. By definition, $g$ is convex. Therefore, to use Theorem \ref{Thm: Fenchel-Rockafellar}, we need to find $\phi \in C_\zeta(\Omega)$ such that $f(\phi),g(\phi)<+\infty$. Take $\varphi(t,x)=Ct - u_T(x)$, where $C$ is given by Assumption \ref{hyp: H growth}. Then $\phi=A^v \varphi = -C + v u'_T \in \mathcal{C}$. By Assumption \ref{hyp: H growth} and \eqref{eq: L growth out of H}, we have
\[
	f(\phi) \leq 0.
\]
From the definition of $g$ (see \eqref{def: g map}),
\[
	g(\phi) = -B\varphi,
\]
and by Assumption \ref{hyp: uT C1 Linfty}, $B\varphi$ is finite. Hence, relying on the duality relation between $C_\zeta(\Omega)$ and $\mathcal{U}^\zeta$ (see Remark \ref{remark:Czeta dual}), we apply Theorem \ref{Thm: Fenchel-Rockafellar} to get
\[
	\inf_{\phi \in C_\zeta(\Omega)} \left( f(\phi)+g(\phi) \right) = \max_{\mu \in \mathcal{U}^\zeta} \left( -f^*(-\mu) - g^*(\mu) \right) = \max_{\mu \in \mathcal{U}^\zeta} \left( -f^*(\mu) - g^*(-\mu) \right).
\]
From Proposition \ref{Prop: f transform} and Proposition \ref{Prop: g transform}, it follows that
\[
	\max_{\mu \in \mathcal{U}^\zeta} \left( -f^*(\mu) - g^*(-\mu) \right) = \max_{\mu \in \mathcal{U}_1 \cap\mathcal{U}_2(B) \cap\mathcal{U}_3 } \left(-\int_{\Omega} L + vu'_T~ d\mu\right).
\]
By \eqref{def: g map},
\[
	\inf_{\phi \in C_\zeta(\Omega)} \left(f(\phi)+g(\phi)\right) 
= \inf_{\phi \in \mathcal{C}} \left( T \sup_{\Omega} \left( \phi - L - vu'_T \right) - \int_{\Omega} \phi d\overline{\mu}\right),
\]
and using the definition of $\mathcal{C}$ in \eqref{def: C set}, the selection of $\overline{\mu}$ in \eqref{eq: mu bar selection}, and the definition of the Legendre transform \eqref{eq:Legendre transform}, we obtain
\begin{align*}
&\inf_{\phi \in \mathcal{C}} \left( T \sup_{\Omega} \left( \phi - L - vu'_T \right) - \int_{\Omega} \phi d\overline{\mu}\right)
	\\
	= & \inf_{\substack{\varphi \in \Lambda([0,T]\times\Rr)\\ \eta \in C([0,T])}} \left( T \sup_{\Omega} \left( A^v \varphi - (v-Q) \eta - L - vu'_T \right) - \int_{\Omega} A^v \varphi - (v-Q) \eta ~d\overline{\mu}\right)
	\\
	= &	\inf_{\substack{\varphi \in \Lambda([0,T]\times\Rr)\\ \eta \in C([0,T])}} \left( T \sup_{\Omega} \left( -\varphi_t + Q \eta - v(\varphi_x + \eta+u'_T)-L \right) - B \varphi \right)
	\\
	= & \inf_{\substack{\varphi \in \Lambda([0,T]\times\Rr)\\ \eta \in C([0,T])}} \left( T \sup_{(t,x) \in [0,T] \times \Rr} \left( -\varphi_t + Q \eta +\sup_{v\in\Rr} \left(- v(\varphi_x + \eta+u'_T)-L\right) \right) - B \varphi \right)
	\\
	= &	\inf_{\substack{\varphi \in \Lambda([0,T]\times\Rr)\\ \eta \in C([0,T])}}  \left( T \sup_{(t,x)\in [0,T] \times \Rr} \bigg( -\varphi_t + Q \eta +H(x,\varphi_x + \eta+u'_T) \bigg) - B \varphi \right).
\end{align*}
The result follows.
\end{proof}
Now, we use the duality result from Theorem \ref{thm: 5.5} to prove Theorem \ref{thm:convex duality h}.
\begin{proof}[\bf{Proof of Theorem \ref{thm:convex duality h}}]
Let $B$ be given by \eqref{def: B operator} and consider the set $\mathcal{U}_2(B)$ according to \eqref{def: U2}. By assumption, $\mathcal{H}(m_0,\nu_T) \neq \emptyset$, from which \eqref{def: h} and Remark \ref{remark: H equals Us intersection} imply 
\[
h(m_0, \nu_T) = \inf_{\substack{\mu \in \mathcal{H}(m_0,\nu_T)}}  \int_{\Omega} L + v u_T'd\mu =\inf_{\mu \in \mathcal{U}_1 \cap\mathcal{U}_2(B) \cap\mathcal{U}_3 } \int_{\Omega} L + vu'_T~ d\mu.
\]
In particular, $\mathcal{U}_2(B) \cap \mathcal{U}_3 \neq \emptyset$. The conclusion follows by invoking Theorem \ref{thm: 5.5} and the previous equality.
\end{proof}


\section{Preliminary results on MFG}\label{Sec:MFG Results}
Here, we consider approximations of Lipschitz continuous solutions of the Hamilton-Jacobi equation in \eqref{eq:MFG system}. We provide a commutation lemma, which states that the approximated solutions are sub-solutions of an approximate Hamilton-Jacobi equation. Then, we improve the result in \cite{gomes2018mean}, where the authors proved that $\varpi$ solving \eqref{eq:MFG system} and \eqref{eq:initial terminal} satisfies $\varpi \in W^{1,1}([0,T])$. A better result can be established as $\varpi$ is Lipschitz continuous, as we prove here. This result, in turn, enables the use of the commutation lemma.

\subsection{A commutation lemma}
The commutation lemmas presented in \cite{gomes2020large} and \cite{MTL17} are applied to a Hamilton-Jacobi equation where the state variable is constrained to the $d$-dimensional torus; that is, periodic boundary conditions. Here, we present a version of this lemma that is valid for the non-periodic case and takes into account the dependence of the Hamilton-Jacobi equation on the price variable. 

We start by introducing smooth approximations to the solutions of \eqref{eq:MFG system}. Let $\rho, \theta \in C^\infty_c(\Rr; [0, \infty))$ be symmetric standard mollifiers, i.e. 
\[
\supp\rho, \supp\theta \subset [-1,1], ~\rho(t) = \rho(-t), ~ \theta(x) = \theta(-x), ~ \mbox{and}~ \| \rho \|_{L^1(\Rr)} = \| \theta \|_{L^1(\Rr)}= 1.
\]  
For $ 0 < \alpha <  T$,  set  $\rho^\alpha(t) := \alpha^{-1} \rho(\alpha^{-1} t),\, t \in \Rr$ and $\theta^\alpha(x) := \alpha^{-1} \theta(\alpha^{-1} x),\, x \in \Rr$.
Then, we have that $\| \rho ^\alpha\|_{L^1(\Rr)} = \| \theta^\alpha \|_{L^1(\Rr)}= 1$, and
\begin{equation}\label{rho-theta-bounds}
\int_0^\infty \rho^\alpha(s) \int_{\Rr} \theta^\alpha(y) |y|~dy ds ,~ \int_0^\infty \rho^\alpha(s)  \int_{\Rr} \theta^\alpha(y) s~dy ds \leq \alpha.
\end{equation}
For $w \in C([\alpha, T] \times \Rr)$, define $w^\alpha \in C^\infty([\alpha, T] \times \Rr)$ as
\begin{equation}\label{eq:convolution}
	w^\alpha(t, x) = \int_0^\infty \rho^\alpha(s) \int_{\Rr} \theta^\alpha(y) w(t-s,x-y) dy ds, \quad (t, x) \in [\alpha, T] \times \Rr.
	\end{equation}
	
\begin{lemma}\label{lem: subsolution}
Suppose that Assumptions \ref{hyp: H convex} and \ref{hyp: H growth} hold. Let $(w,m,\varpi)$ solve \eqref{eq:MFG system} (see Remark \ref{Remark: Notion Sol MFG}). Assume further that $w$ is Lipschitz in $x$ and $\varpi$ is Lipschitz. Let $w^\alpha$ be defined as in \eqref{eq:convolution}.	
	Then, there exists $C'>0$ depending on $\varpi$, $H$ and the Lipschitz constants of  $w$ and $\varpi$ such that 
	\begin{equation}\label{eq:w subsolution}
	-w^\alpha_t + H(x, \varpi + w^\alpha_x) \leq C' \alpha, \quad \mbox{for all}\quad  (t,x)\in [\alpha, T] \times \Rr.
	\end{equation}
\end{lemma}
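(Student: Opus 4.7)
The plan is to apply the standard commutation approach for convex Hamiltonians: use Jensen's inequality on the mollified equation to absorb the nonlinearity, then estimate the error terms from moving the base point $(t,x)$ through the convolution using the Lipschitz regularity of $w$ in $x$, of $\varpi$ in $t$, and local Lipschitz regularity of $H$ on bounded sets in $p$.

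First I would upgrade the regularity of $w$ so that the HJ equation holds pointwise almost everywhere. Since $w_x\in L^\infty$ (by the Lipschitz assumption on $w$) and $\varpi$ is bounded, the continuity of $H$ gives $H(x,\varpi+w_x)\in L^\infty_{\mathrm{loc}}$; together with the viscosity equation this forces $w_t\in L^\infty$, so $w$ is Lipschitz in $(t,x)$. Rademacher's theorem then yields that $w$ is differentiable a.e.\ and
\[
-w_t(\tau,y)+H(y,\varpi(\tau)+w_x(\tau,y))=0
\]
for a.e.\ $(\tau,y)$. Differentiating under the integral in \eqref{eq:convolution} is legitimate because $\rho,\theta$ are smooth and compactly supported, so $w^\alpha_t$ and $w^\alpha_x$ are convolutions of the a.e.-defined $w_t$ and $w_x$.

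Next I would substitute the a.e.\ equation into $w^\alpha_t$ and invoke convexity of $p\mapsto H(x,p)$ (Assumption \ref{hyp: H convex}). Since $\rho^\alpha\theta^\alpha$ is a probability density and
\[
\varpi(t)+w^\alpha_x(t,x)=\int_0^\infty\!\!\int_{\Rr}\rho^\alpha(s)\theta^\alpha(y)\bigl(\varpi(t)+w_x(t-s,x-y)\bigr)\,dy\,ds,
\]
Jensen's inequality gives $H(x,\varpi(t)+w^\alpha_x(t,x))$ as an upper bound by the corresponding average of $H(x,\varpi(t)+w_x(t-s,x-y))$. Combining these two facts yields
\[
-w^\alpha_t(t,x)+H(x,\varpi(t)+w^\alpha_x(t,x))\leq \int_0^\infty\!\!\int_{\Rr}\rho^\alpha(s)\theta^\alpha(y)\,\Delta(t,x,s,y)\,dy\,ds,
\]
where $\Delta:=H(x,\varpi(t)+w_x(t-s,x-y))-H(x-y,\varpi(t-s)+w_x(t-s,x-y))$ measures the discrepancy between ``evaluating $H$ at the base point'' and ``evaluating $H$ inside the averaging''.

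Finally I would estimate $\Delta$. Since $w_x$ and $\varpi$ are in $L^\infty$, the $p$-arguments stay in a ball $\{|p|\leq R\}$; by Assumption \ref{hyp: H growth}, $|H_x|$ and $|H_p|$ are bounded by some $K=K(R)$ on $\Rr\times\{|p|\leq R\}$. Splitting $\Delta$ via the intermediate point $H(x-y,\varpi(t)+w_x(t-s,x-y))$ and applying the mean value theorem gives
\[
|\Delta|\leq K|y|+K|\varpi(t)-\varpi(t-s)|\leq K\bigl(|y|+\mathrm{Lip}(\varpi)\,s\bigr).
\]
Integrating against $\rho^\alpha(s)\theta^\alpha(y)$ and using the first-moment bounds \eqref{rho-theta-bounds} on the mollifiers, I obtain the estimate $C'\alpha$ with $C'$ depending only on $\|\varpi\|_\infty$, $\mathrm{Lip}(w)$, $\mathrm{Lip}(\varpi)$, and $H$.

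The main obstacle is the initial regularity upgrade: without knowing that the HJ equation is satisfied almost everywhere one cannot substitute $w_t$ in terms of $H$, and this is precisely where the assumption that $\varpi$ is Lipschitz (rather than merely $W^{1,1}$, as in \cite{gomes2018mean}) enters critically — it is also what allows the error term $K\,\mathrm{Lip}(\varpi)\,s$ to be integrated against $\rho^\alpha$ and converted into a factor of $\alpha$. Apart from this, the estimate is a direct Jensen-plus-modulus-of-continuity argument.
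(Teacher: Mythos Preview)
Your proposal is correct and follows essentially the same route as the paper: Jensen's inequality to pass $H$ inside the convolution, then a telescoping estimate on the discrepancy term (your $\Delta$, the paper's $q$) using the $H_x$, $H_p$ bounds from Assumption~\ref{hyp: H growth} together with the Lipschitz constants of $w$ and $\varpi$, followed by the first-moment bounds \eqref{rho-theta-bounds} on the mollifiers. Your intermediate point in the splitting of $\Delta$ differs from the paper's (you shift $x$ first, the paper shifts $\varpi$ first), but this is immaterial; if anything your treatment is slightly more careful, since you explicitly justify the a.e.\ validity of the HJ equation and the legitimacy of writing $w^\alpha_t$ as the convolution of $w_t$, which the paper takes for granted.
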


\begin{proof}
	To obtain the desired inequality, we write the left-hand side of \eqref{eq:w subsolution} as a convolution between $\rho^\alpha \theta^\alpha$ and the left-hand side of the first equation in \eqref{eq:MFG system}. Thus, for the first term, we have
	\begin{equation}\label{eq:wt-alpha}
		-w^\alpha_t(t, x) = \int_0^\infty \rho^\alpha(s) \int_{\Rr} \theta^\alpha(y) (-w_t(t-s,x-y)) dy ds.
	\end{equation}

	For the second term, by Jensen's inequality (\cite{HardyInequalities}, Theorem 204), we have 
	\begin{align}\label{eq: Aux Subsol lemma 0}
	H(x, \varpi(t) + w^\alpha_x(t, x)) &= H\left(x, \int_0^\infty \rho^\alpha(s) \int_{\Rr} \theta^\alpha(y) (\varpi(t) + w_x(t-s,x-y)) dy ds \right) \nonumber \\
	&\leq  \int_0^\infty \rho^\alpha(s) \int_{\Rr} \theta^\alpha(y) H(x, \varpi(t) + w_x(t-s, x-y)) dy ds.
	\end{align}
	Let $t \in [\alpha, T]$, $s \in [0, \alpha]$, $x,y\in\Rr$, and 
\[
	q(t,x;s,y):=H\big( x, \varpi(t) + w_x(t-s, x-y)\big)-H\big(x-y, \varpi(t-s) + w_x(t-s, x-y)\big).
\]
	Using Assumption \ref{hyp: H growth} and the Lipschitz continuity of $w$ and $\varpi$, we get	
	\begin{align*}
	|q(t,x;s,y)| &\leq \left|H\big(x, \varpi(t) + w_x(t-s, x-y)) - H(x, \varpi(t-s) + w_x(t-s, x-y)\big)\right|
	\\&+\left|H\big(x, \varpi(t-s) + w_x(t-s, x-y)) - H(x-y, \varpi(t-s) + w_x(t-s, x-y)\big)\right|
	\\ & \leq C|\varpi(t)-\varpi(t-s)|\left(\left| \varpi(t-s) + w_x(t-s, x-y)\right|^{\gamma_2-1}+1\right) 
	\\ &+ C|y|\big( \left|\varpi(t-s)+w_x(t-s,x-y)\right|^{\gamma_2}+1\big)
	\\ & \leq C' s \left(\left| \varpi(t-s) + w_x(t-s, x-y)\right|^{\gamma_2-1}+1\right) 
	\\ &+ C|y|\big( \left|\varpi(t-s)+w_x(t-s,x-y)\right|^{\gamma_2}+1\big)
	\\ &\leq C'( s+|y|),	
	\end{align*}
	where $C'$ depends on $\varpi$, $\gamma_2$, and the Lipschitz constants of $w$ and $\varpi$. From the previous inequality and \eqref{rho-theta-bounds}, we obtain
	\begin{align}\label{eq: Aux Subsol lemma 1}
	\int_0^\infty \rho^\alpha(s) \int_{\Rr} \theta^\alpha(y) q(t,x;s,y) dy ds & \leq \int_0^\infty \rho^\alpha(s) \int_{\Rr} \theta^\alpha(y) C'( s+|y|) dy ds \leq  C'\alpha.
	\end{align}
Then, from \eqref{eq: Aux Subsol lemma 0} and \eqref{eq: Aux Subsol lemma 1}, we have
\begin{align*}
	&H(x, \varpi(t) + w^\alpha_x(t, x))  
	\\
	& \leq  \int_0^\infty \rho^\alpha(s) \int_{\Rr} \theta^\alpha(y) H(x-y, \varpi(t-s) + w_x(t-s, x-y)) dy ds + C' \alpha.
\end{align*}
Using the preceding inequality and \eqref{eq:wt-alpha}, we get
\begin{align*}
&	-w^\alpha_t + H(x, \varpi + w^\alpha_x) 
\\
&\leq \int_0^\infty \rho^\alpha(s) \int_{\Rr} \theta^\alpha(y) \big( -w_t(t-s,x-y) + H(x-y, \varpi(t-s) + w_x(t-s, x-y)) \big) dyds 
\\
& \quad + C' \alpha,
\end{align*}
which implies \eqref{eq:w subsolution}.
\end{proof}

\subsection{Lipschitz continuity of the price}
We begin by recalling the following techniques and results from \cite{gomes2018mean} if Assumptions \ref{hyp:Q C infty}, \ref{hyp: H separability DS}, and \ref{hyp: V-uT Lipschitz 2nd D bounded DS} hold. Firstly, to prove the existence of a solution $(u,m,\varpi)$ of \eqref{eq:MFG system} and \eqref{eq:initial terminal}, the authors used the vanishing viscosity method, which relies on the following regularized version of \eqref{eq:MFG system}
\begin{equation}\label{eq:regularized MFG}
	\begin{cases}
	-u_t(t,x) + H(x,\varpi(t) + u_x(t,x) ) = \epsilon u_{xx}
	\\
	m_t(t,x) - \big(H_p(x,\varpi(t) + u_x(t,x))m(t,x)\big)_x = \epsilon m_{xx}(t,x)
	\\
	-\int_{\Rr} H_p(x,\varpi(t) + u_x(t,x))m(t,x) dx = Q(t)
	\end{cases} \quad (t,x)\in [0,T]\times \Rr,
\end{equation}
subject to \eqref{eq:initial terminal}, where $\epsilon > 0$. Secondly, the proof of existence of a solution  $(u^\epsilon,m^\epsilon,\varpi^\epsilon)$ of \eqref{eq:regularized MFG} and \eqref{eq:initial terminal} uses a fixed-point argument. This argument shows that $(u^\epsilon,m^\epsilon,\varpi^\epsilon)$ satisfies
\begin{equation}\label{eq:FixPoint argument}
	\dot{\varpi^\epsilon}=\frac{-\dot{Q}-\int_{\Rr}H_{pp}(x,\varpi^\epsilon+u^\epsilon_x)H_x(x,\varpi^\epsilon+u^\epsilon_x)m^\epsilon + \epsilon H_{ppp}(x,\varpi^\epsilon+u^\epsilon_x)(u^\epsilon_{xx})^2m^\epsilon~ dx}{\int_{\Rr} H_{pp}(x,\varpi^\epsilon+u^\epsilon_x) m^\epsilon ~ dx},
\end{equation}
and $\varpi^\epsilon(0)$ is determined by
\[
	\int_{\Rr}H_p(x,\varpi^\epsilon(0)+u^\epsilon(0,x))m_0(x)~dx = -Q(0).
\]
Using \eqref{eq:FixPoint argument}, we can deduce the Lipschitz continuity of $\varpi$, where $(u,m,\varpi)$ solves \eqref{eq:MFG system} and \eqref{eq:initial terminal}, as we show next.
\begin{proposition}\label{prop:price Lipschitz}
Suppose that Assumptions \ref{hyp: H convex}, \ref{hyp:Q C infty}, \ref{hyp: H separability DS} and \ref{hyp: V-uT Lipschitz 2nd D bounded DS} hold. Then, there exists a solution $(u,m,\varpi)$ of \eqref{eq:MFG system} and \eqref{eq:initial terminal} such that $\varpi$ is Lipschitz continuous.
\end{proposition}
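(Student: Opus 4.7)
The plan is to extract a uniform-in-$\epsilon$ Lipschitz bound on the price $\varpi^\epsilon$ arising from the regularized MFG system \eqref{eq:regularized MFG}--\eqref{eq:initial terminal}, and then pass to the limit $\epsilon \downarrow 0$ along a subsequence. The existence of $(u^\epsilon,m^\epsilon,\varpi^\epsilon)$ together with the explicit expression \eqref{eq:FixPoint argument} for $\dot\varpi^\epsilon$ is given by \cite{gomes2018mean}, so the task reduces to bounding the numerator of \eqref{eq:FixPoint argument} in $L^\infty([0,T])$ and the denominator away from zero, both uniformly in $\epsilon$.

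For the denominator, Assumption \ref{hyp: H convex} yields $H_{pp}(x,\varpi^\epsilon+u^\epsilon_x)\geq \kappa$, and integrating against the probability measure $m^\epsilon(t,\cdot)$ gives $\int_{\Rr} H_{pp}\, m^\epsilon\, dx \geq \kappa > 0$ uniformly in $(t,\epsilon)$. For the numerator, Assumption \ref{hyp:Q C infty} bounds $|\dot Q|$; under separability (Assumption \ref{hyp: H separability DS}) we have $H_x = -V'$, and Assumption \ref{hyp: V-uT Lipschitz 2nd D bounded DS} gives $|V'|\leq C$, whereas $|H_{pp}|=|\mathcal{H}_{pp}|\leq 1/\kappa'$ is uniformly bounded, so $|\int_\Rr H_{pp}H_x m^\epsilon\, dx|\leq C$. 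The delicate term is the viscosity correction $\epsilon\int_\Rr H_{ppp}(u^\epsilon_{xx})^2 m^\epsilon\, dx$.

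To control this term, I would differentiate the first equation of \eqref{eq:regularized MFG} twice in $x$; invoking separability ($H_{xp}=0$, $H_{xx}=-V''$), the function $w^\epsilon := u^\epsilon_{xx}$ satisfies a Riccati-type equation. Pairing it with $m^\epsilon$ through the Fokker--Planck equation and integrating by parts in $x$ (the $H_p w^\epsilon_x$ and viscous terms recombine) yields the identity
\[
\frac{d}{dt}\int_\Rr w^\epsilon m^\epsilon\, dx + \int_\Rr V'' m^\epsilon\, dx = \int_\Rr H_{pp}(w^\epsilon)^2 m^\epsilon\, dx.
\]
Integrating in time and using the terminal bound $|u_T''|\leq C$ together with the uniform semiconcavity of $u^\epsilon$ (so $\int w^\epsilon m^\epsilon$ is bounded in $L^\infty_t$) gives uniform control of $\int_0^T\!\int (w^\epsilon)^2 m^\epsilon\, dx\, dt$; an energy argument differentiating $\int (w^\epsilon)^2 m^\epsilon\, dx$ along the flow should then upgrade this to a pointwise-in-$t$ bound for $\epsilon\int (w^\epsilon)^2 m^\epsilon\, dx$. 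Combined with $|H_{ppp}|\leq C$, this controls the viscosity term in \eqref{eq:FixPoint argument}.

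With $\|\dot\varpi^\epsilon\|_{L^\infty([0,T])}$ bounded uniformly, Arzelà--Ascoli produces a subsequence $\varpi^{\epsilon_n}$ converging uniformly to a Lipschitz $\varpi$, and combining with the convergences of $u^\epsilon,m^\epsilon$ from \cite{gomes2018mean} shows $(u,m,\varpi)$ solves \eqref{eq:MFG system}--\eqref{eq:initial terminal}. The principal obstacle is the last step of the energy argument: obtaining a pointwise-in-$t$ bound on $\epsilon \int (u^\epsilon_{xx})^2 m^\epsilon\, dx$ while using only the one-sided semiconcavity of $u^\epsilon$, since Assumption \ref{hyp: V-uT convex DS}—which would yield two-sided $L^\infty$ bounds on $u^\epsilon_{xx}$—is not available here.
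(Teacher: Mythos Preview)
Your framework is right (use \eqref{eq:FixPoint argument}, bound denominator via uniform convexity, handle the $\dot Q$ and $H_{pp}H_x$ terms by the assumptions), but you are aiming for something stronger than what is needed and, as you yourself flag, the step you need is not actually established. Specifically, you try to prove $\sup_\epsilon \|\dot\varpi^\epsilon\|_{L^\infty}<\infty$, which forces you to bound $\epsilon\int_\Rr (u^\epsilon_{xx})^2 m^\epsilon\,dx$ \emph{pointwise in $t$} uniformly in $\epsilon$. The energy identity you derive only gives the time-integrated bound $\int_0^T\!\int_\Rr H_{pp}(u^\epsilon_{xx})^2 m^\epsilon\,dx\,dt \le C$ (this is precisely the second-order energy estimate quoted as \eqref{second-eng-est}); without Assumption~\ref{hyp: V-uT convex DS} there is no apparent mechanism to upgrade this to a pointwise-in-$t$ bound, and your sketch of ``differentiating $\int (w^\epsilon)^2 m^\epsilon$ along the flow'' does not close because the resulting identity produces third-derivative terms that are not controlled.

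The paper sidesteps this entirely. It does \emph{not} prove that $\varpi^\epsilon$ is uniformly Lipschitz; it only proves that the limit $\varpi$ is. Write $\dot\varpi^\epsilon = \vartheta^\epsilon_\infty + \epsilon\,\vartheta^\epsilon_1$, where $\vartheta^\epsilon_\infty$ collects the $\dot Q$ and $H_{pp}H_x$ terms (bounded in $L^\infty([0,T])$ uniformly in $\epsilon$, exactly as you argued) and $\vartheta^\epsilon_1$ is the viscosity term, bounded in $L^1([0,T])$ uniformly in $\epsilon$ by the second-order energy estimate and $|H_{ppp}|\le C$. Passing to weak-$\star$ limits, $\vartheta^\epsilon_\infty \rightharpoonup \vartheta_\infty$ in $L^\infty$ and $\vartheta^\epsilon_1 \rightharpoonup \mu$ as Radon measures; since the latter is multiplied by $\epsilon\to 0$, one gets $\dot\varpi = \vartheta_\infty \in L^\infty([0,T])$ in the sense of distributions, hence $\varpi\in W^{1,\infty}$. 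The key idea you are missing is that the $L^1_t$ control already suffices once you observe the bad term carries a prefactor $\epsilon$.
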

\begin{proof}
The existence of a solution $(u,m,\varpi)$ of \eqref{eq:MFG system} and \eqref{eq:initial terminal} is guaranteed by Theorem 1 in \cite{gomes2018mean}. We aim to prove that $\varpi$, obtained in \cite{gomes2018mean}, is Lipschitz. To obtain this solution, the authors considered, for $\epsilon>0$, solutions $(u^\epsilon,m^\epsilon,\varpi^\epsilon)$ of \eqref{eq:regularized MFG} and \eqref{eq:initial terminal}  that satisfy \eqref{eq:FixPoint argument}. Extracting a sub-sequence if necessary, it is guaranteed that $\varpi^\epsilon \to \varpi$ uniformly. To prove that $\varpi$ is Lipschitz, we consider the right-hand side of \eqref{eq:FixPoint argument}. By Assumption \ref{hyp: H convex}, we have
\begin{align}\label{eq:Aux price Lip 1}
	t\mapsto \frac{1}{\int_{\Rr} H_{pp}(x,\varpi^\epsilon+u^\epsilon_x) m^\epsilon ~ dx} \leq \frac{1}{\kappa} \quad \mbox{for all} \quad t\in[0,T].
\end{align}
By Assumptions \ref{hyp: H separability DS} and \ref{hyp: V-uT Lipschitz 2nd D bounded DS}, $|H_x|=|V'|\leq \mbox{Lip}(V)$, where $\mbox{Lip}(V)$ denotes the Lipschitz constant of $V$. Hence, Assumption \ref{hyp: H separability DS} 
implies that 
\begin{align}\label{eq:Aux price Lip 2}
	t\mapsto \int_{\Rr}H_{pp}(x,\varpi^\epsilon+u^\epsilon_x)H_x(s,\varpi^\epsilon+u^\epsilon_x)m^\epsilon~ dx \leq \tfrac{\mbox{Lip}(V)}{\kappa'} \quad \mbox{for all} \quad t\in[0,T].
\end{align}
By Assumption \ref{hyp: H separability DS} and Assumption \ref{hyp: H convex}, we have
\begin{align}\label{eq:Aux price Lip 3}
	\int_0^T \int_{\Rr} |H_{ppp}(x,\varpi^\epsilon+u_x^\epsilon)|(u_{xx}^\epsilon)^2m^\epsilon dx dt & \leq C \int_0^T \int_{\Rr} (u_{xx}^\epsilon)^2 m^\epsilon dx dt \nonumber
	\\
	& \leq \frac{C}{\kappa} \int_0^T \int_{\Rr} H_{pp}(x,\varpi^\epsilon+u_x^\epsilon)(u_{xx}^\epsilon)^2 m^\epsilon dx dt.
\end{align}
Assumptions \ref{hyp: H separability DS}, \ref{hyp: V-uT Lipschitz 2nd D bounded DS} and Proposition 5 in \cite{gomes2018mean} guarantee that the term 
\begin{equation}\label{second-eng-est}
	\int_0^T \int_{\Rr} H_{pp}(x,\varpi^\epsilon+u_x^\epsilon)(u_{xx}^\epsilon)^2 m^\epsilon \dx \dt
\end{equation}
has an upper bound that is independent of $\epsilon$. Hence, using Assumption \ref{hyp:Q C infty}, \eqref{eq:Aux price Lip 1}, \eqref{eq:Aux price Lip 2} and \eqref{eq:Aux price Lip 3}, we can write \eqref{eq:FixPoint argument} as
\begin{equation*}
	\dot{\varpi^\epsilon}=\vartheta^\epsilon_\infty +\epsilon \vartheta_1^\epsilon,
\end{equation*}
where 
\begin{align*}
	\vartheta_{\infty}^\epsilon = \frac{-\dot{Q}-\int_{\Rr}H_{pp}(x,\varpi^\epsilon+u^\epsilon_x)H_x(x,\varpi^\epsilon+u^\epsilon_x)m^\epsilon dx}{\int_{\Rr} H_{pp}(x,\varpi^\epsilon+u^\epsilon_x) m^\epsilon ~ dx} \in L^\infty([0,T]),
\end{align*}
\begin{align*}
	\vartheta_{1}^\epsilon = \frac{-\int_{\Rr} H_{ppp}(x,\varpi^\epsilon+u^\epsilon_x)(u^\epsilon_{xx})^2m^\epsilon~ dx}{\int_{\Rr} H_{pp}(x,\varpi^\epsilon+u^\epsilon_x) m^\epsilon ~ dx} \in L^1([0,T]),
\end{align*}
and they satisfy
\[
	\|\vartheta_1^\epsilon\|_{L^1([0,T])} \leq C' \quad \mbox{and} \quad \|\vartheta_\infty^\epsilon\|_{L^\infty([0,T])} \leq C'
\]
for $\epsilon \to 0$, where $C'$ is independent of $\epsilon$. Hence, (\cite{FoLe07}, Proposition 1.202) passing to a sub-sequence, there exists $\mu \in \mathcal{R}([0,T])$ such that $\vartheta^\epsilon_1$ converges in the weak-$\star$ topology to $\mu$; that is,
\begin{equation}\label{eq:Aux weak lim1}
	\int_0^T \vartheta^\epsilon_1 \eta ~dt \to \int_0^T \eta ~d\mu \quad \mbox{for all}\quad \eta \in C([0,T]).
\end{equation}
Passing to a further sub-sequence if necessary, (\cite{FoLe07}, Proposition 2.46) there exists $\vartheta_{\infty} \in L^{\infty}([0,T])$ such that $\vartheta_{\infty}^\epsilon$ converges in the weak-$\star$ topology to $\vartheta_\infty$; that is,
\begin{equation}\label{eq:Aux weak lim2}
	\int_0^T \vartheta^\epsilon_\infty \eta ~dt \to \int_0^T \vartheta_\infty \eta ~dt \quad \mbox{for all}\quad \eta \in L^1([0,T]).
\end{equation}
 Let $\eta \in C^1_c((0,T))$. By uniform convergence, we have that 
 \[
 	\int_0^T \left(\vartheta^\epsilon_\infty +\epsilon \vartheta_1^\epsilon\right) \eta ~ dt=\int_0^T \dot{\varpi^\epsilon} \eta ~ dt=-\int_0^T \varpi^\epsilon \dot{\eta} ~ dt \to -\int_0^T \varpi \dot{\eta} ~dt ,
 \]
and by \eqref{eq:Aux weak lim1} and \eqref{eq:Aux weak lim2}, we have that
  \[
 	\int_0^T \left(\vartheta^\epsilon_\infty +\epsilon \vartheta_1^\epsilon\right) \eta ~ dt \to \int_0^T \vartheta_{\infty} \eta~ dt.
 \]
 Hence, $\dot{\varpi} = \vartheta_{\infty}$ in the sense of distributions. Thus, $\varpi \in W^{1,\infty}([0,T])$, which is equivalent to (\cite{evansgariepy2015}, Theorem 4.5) 
$\varpi$ being Lipschitz continuous in $[0,T]$.
\end{proof}

\section{Proof of Theorem \ref{thm: Representation}}
\label{Sec:Proof Theorem 1.2}
Here, we use the results from Sections \ref{Sec:Duality} and \ref{Sec:MFG Results} to prove Theorem \ref{thm: Representation}. We divide the proof into two lemmas, Lemma \ref{lem: lower bound} and Lemma \ref{lem: upper bound}.
\begin{lemma}
\label{lem: lower bound}
Let $m_0 \in \Pp(\Rr)$. Suppose that Assumptions \ref{hyp: H convex}-
\ref{hyp: V-uT convex DS} hold. Let $(u,m,\varpi)$ solve \eqref{eq:MFG system} and \eqref{eq:initial terminal}. Then, 
\begin{equation*}
	\int_{\Rr} \left( u(0, x) - u_T(x) \right)dm_0(x) - \int_0^T \varpi(t) Q(t) dt \leq 
	\inf_{\substack{\mu \in \mathcal{H}(m_0)}} \int_{\Omega} L(x,v) + v u_T'(x)d\mu(t,x,v).
\end{equation*}
\end{lemma}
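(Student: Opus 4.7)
The plan is to combine the Legendre--Fenchel duality between $L$ and $H$ with the approximate subsolution property established in the commutation Lemma \ref{lem: subsolution}. Fix an arbitrary $\mu \in \mathcal{H}(m_0)$, so that $\mu \in \mathcal{H}(m_0, \nu)$ for a unique $\nu \in \Pp(\Rr)$. As a preliminary reduction, the holonomy condition applied to the test function $\varphi(t,x) = u_T(x)$ (which belongs to $\Lambda([0,T]\times\Rr)$ by Assumption \ref{hyp: uT C1 Linfty}) yields
\[
\int_\Omega v\, u_T'(x)\, d\mu = \int_\Rr u_T(x)\, d\nu(x) - \int_\Rr u_T(x)\, dm_0(x).
\]
Subtracting this identity, the inequality to prove reduces to
\[
\int_\Omega L(x,v)\, d\mu \;\geq\; \int_\Rr u(0,x)\, dm_0(x) - \int_\Rr u_T(x)\, d\nu(x) - \int_0^T Q(t)\varpi(t)\, dt. \qquad (\star)
\]

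For each $\alpha \in (0,T)$, Proposition \ref{prop:price Lipschitz} ensures that $\varpi$ is Lipschitz, so Lemma \ref{lem: subsolution} applies and gives $H(x, u^\alpha_x + \varpi) \leq u^\alpha_t + C'\alpha$ on $[\alpha,T]\times\Rr$. Combined with the pointwise Legendre inequality $L(x,v) \geq -v(u^\alpha_x + \varpi) - H(x, u^\alpha_x + \varpi)$, this yields
\[
L(x,v) \;\geq\; -u^\alpha_t - v\, u^\alpha_x - v\,\varpi - C'\alpha \quad \text{on}\quad [\alpha,T]\times\Rr.
\]
I then integrate this bound against $d\mu$. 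Three facts drive the computation: (i) the holonomy condition applied to test functions of the form $\varphi(t,x) = \psi(t)$ shows that the time-marginal of $\mu$ equals the Lebesgue measure $dt$ on $[0,T]$, so the constant term contributes $-C'\alpha T$, which vanishes as $\alpha \to 0^+$; (ii) the balance condition with $\eta = \varpi \in C([0,T])$ gives $\int_\Omega v\,\varpi\, d\mu = \int_\Omega Q\,\varpi\, d\mu = \int_0^T Q(t)\varpi(t)\, dt$; (iii) the holonomy condition is applied to a $C^1$ extension $\tilde u^\alpha \in \Lambda([0,T]\times\Rr)$ of $u^\alpha$ to produce
\[
\int_\Omega (\tilde u^\alpha_t + v\, \tilde u^\alpha_x)\, d\mu = \int_\Rr \tilde u^\alpha(T,x)\, d\nu(x) - \int_\Rr \tilde u^\alpha(0,x)\, dm_0(x).
\]

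I finally pass to the limit $\alpha \to 0^+$. Uniform convergence $u^\alpha \to u$ on compact subsets of $(0,T]\times\Rr$, together with the moment bound $\int_\Omega (|x|^{\zeta_1}+|v|^{\zeta_2})\, d\mu < \infty$ and the Lipschitz regularity of $u$, make the right-hand side above converge to $\int_\Rr u_T\, d\nu - \int_\Rr u(0,\cdot)\, dm_0$, yielding $(\star)$. The main obstacle is step (iii): because $u^\alpha$ is defined only on $[\alpha,T]\times\Rr$, one must extend it to an admissible test function in $\Lambda$ without introducing uncontrolled boundary error. A natural construction is to extend $u^\alpha$ by its boundary slice $u^\alpha(\alpha,\cdot)$ on $[0,\alpha)$ and then re-mollify in $t$ at a finer scale $o(\alpha)$. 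Although $\tilde u^\alpha_t$ may then be of order $1/\alpha$ inside the transition layer $[0, O(\alpha)]\times\Rr^2$, the $\mu$-mass of that layer is $O(\alpha)$; combined with the weak continuity of the spatial disintegrations $t\mapsto \sigma_t$ of $\mu$ (which also yields $\sigma_0 = m_0$) and the moment control on $v$, this should force the transition-layer contribution to vanish in the limit.
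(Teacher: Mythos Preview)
Your overall strategy---Legendre duality combined with the commutation Lemma~\ref{lem: subsolution}, then integrating against $\mu$ and invoking the holonomy and balance conditions---is exactly the one the paper uses. The difference lies entirely in how you handle the fact that $w^\alpha$ (and hence the subsolution inequality) lives only on $[\alpha,T]\times\Rr$.

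The paper avoids the extension problem altogether by a time rescaling: it sets
\[
u^\alpha(t,x)=w^\alpha\Bigl(\tfrac{T-\alpha}{T}\,t+\alpha,\,x\Bigr)-u_T(x),\qquad (t,x)\in[0,T]\times\Rr,
\]
so that $u^\alpha\in\Lambda([0,T]\times\Rr)$ directly, the commutation estimate transfers to all of $[0,T]$ with the price evaluated at the shifted time, and the holonomy condition can be applied to $u^\alpha$ without any cut-and-paste. The limit $\alpha\to0$ is then completely straightforward. This is cleaner than your proposed ``extend by the boundary slice and re-mollify'' step (iii), and in particular it never requires disintegrating $\mu$ or any regularity of $t\mapsto\sigma_t$.

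Two comments on your step (iii). First, your worry that $\tilde u^\alpha_t$ could be of order $1/\alpha$ is misplaced: if you extend by the constant slice $u^\alpha(\alpha,\cdot)$, then the pre-mollified function has time derivative zero on $[0,\alpha)$ and bounded time derivative on $[\alpha,T]$ (since $u_t$ is locally bounded a.e.\ by the Hamilton--Jacobi equation), so after re-mollifying at any scale the time derivative stays uniformly bounded. Second, the actual error you must control on the strip $[0,\alpha)\times\Rr^2$ is the Hamiltonian term $H(x,\tilde u^\alpha_x+\varpi)$, which under Assumption~\ref{hyp: H growth} grows like $|x|^{\gamma_1}$; you would need to argue (e.g.\ via the lower bound on $L$ and the finiteness of $\int_\Omega L\,d\mu$) that $\int_\Omega |x|^{\gamma_1}\,d\mu<\infty$, so that this contribution is $O(\alpha)$. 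This can be done, but the rescaling trick bypasses it entirely.
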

\begin{proof}
By Assumptions \ref{hyp:Q C infty}, \ref{hyp: H separability DS}, \ref{hyp: V-uT Lipschitz 2nd D bounded DS}, and \ref{hyp: V-uT convex DS}, Theorem 1 in \cite{gomes2018mean} guarantees the existence of a unique $(u,m,\varpi)$ solving \eqref{eq:MFG system} and \eqref{eq:initial terminal}. Because $u$ is continuous (see Remark \ref{Remark: Notion Sol MFG}), let $w^\alpha$ be the function given by \eqref{eq:convolution}; that is,
\begin{equation}\label{eq:w subsolution u}
w^\alpha(t, x) = \int_0^\infty \rho^\alpha(s) \int_{\Rr} \theta^\alpha(y) u(t-s,x-y) dy ds, \quad (t, x) \in [\alpha, T] \times \Rr.
\end{equation}
For $(t, x) \in [0, T] \times \Rr $, set
\begin{equation*}
u^\alpha(t, x) = 
w^\alpha \left( \tfrac{T-\alpha} {T} t + \alpha, x \right) - u_T(x),
\end{equation*}
which is $C^{1}([0,T]\times \Rr)$ due to Assumption \ref{hyp: uT C1 Linfty}  and \eqref{eq:w subsolution u}. By Assumptions \ref{hyp: H separability DS} and \ref{hyp: V-uT Lipschitz 2nd D bounded DS}, the map $x\mapsto u(t,x)$ is Lipschitz for $0\leq t \leq T$ (\cite{gomes2018mean}, Proposition 1), and the Lipschitz constant depends on $T$ and the estimates for $V$ and $u_T$. Hence, $u_x$ is bounded independently of $t$. Therefore, $u_x^\alpha \in L^\infty([0,T]\times \Rr)$ because
\begin{align*}
	u_x^\alpha(t,x)&=w^\alpha_x\left(\tfrac{T-\alpha}{T}t+\alpha,x\right)-u_T'(x) \nonumber
	\\
	&=\int_0^\infty\rho^\alpha(s)\int_\Rr \theta^\alpha(y)u_x\left(\tfrac{T-\alpha}{T}t+\alpha-s,x-y\right)dyds-u_T'(x).
\end{align*}
Furthermore, recalling that $u$ is a viscosity solution to the first equation in \eqref{eq:regularized MFG} with $\varepsilon=0$, we have that the first equation in \eqref{eq:regularized MFG} with $\varepsilon=0$ holds a.e. $(t,x)\in (0,T)\times\Rr$. Using this and the facts that $u_x \in L^\infty((0,T)\times \Rr)$ and $\varpi \in W^{1,\infty}([0,T])$, we deduce that $u_t\in L_{loc}^\infty((0,T)\times \Rr)$. Thus,  $u_t^\alpha\in L^\infty((0,T)\times \Rr)$ because
\begin{align*}
	u_t^\alpha(t,x)&=\tfrac{T-\alpha}{T}w^\alpha_t\left(\tfrac{T-\alpha}{T}t+\alpha,x\right) \nonumber
	\\
	&=\tfrac{T-\alpha}{T}\int_0^\infty\rho^\alpha(s)\int_\Rr \theta^\alpha(y)u_t\left(\tfrac{T-\alpha}{T}t+\alpha-s,x-y\right)dyds. \nonumber
\end{align*}
Hence, $u^\alpha \in \Lambda([0,T]\times\Rr)$. 
Now, take $\mu \in \mathcal{H}(m_0)$ (see Remark \ref{Remark:H(m0) not empty}). By \eqref{eq:holonomy equality}, we have
\begin{equation}\label{eq:Aux 0 Lower bound Thm 1.2}
\int_{\Omega} u^\alpha_t (t, x) + v u^\alpha_x(t, x) d\mu (t,x,v) = \int_{\Rr} u^\alpha (T, x) d\nu^{\mu}(x) - \int_{\Rr} u^\alpha (0, x) dm_0(x).
\end{equation}
By Assumption \ref{hyp: H convex} and \eqref{eq:Legendre transform}, using $p=\varpi\left(\tfrac{T-\alpha}{T}t+\alpha\right) + u^\alpha_x(t, x) + u_T'(x)$, it follows that 
\begin{align}\label{eq:Aux 1 Lower bound Thm 1.2}
-v u^\alpha_x(t, x) & \leq  L(x,v) + v u_T'(x) + v  \varpi\left(\tfrac{T-\alpha}{T}t+\alpha\right) \nonumber
\\
&\quad + H\big(x,\varpi\left(\tfrac{T-\alpha}{T}t+\alpha\right) + u^\alpha_x(t, x) + u_T'(x)\big).
\end{align}
Moreover, by the Lipschitz continuity of $u$ in $x$ and Proposition \ref{prop:price Lipschitz}, we apply Lemma \ref{lem: subsolution} to $w$ defined by \eqref{eq:w subsolution u} to get  
\[
	-w^\alpha_t\left(\tfrac{T-\alpha}{T}t+\alpha,x\right) + H\left(x, \varpi\left(\tfrac{T-\alpha}{T}t+\alpha\right) + w^\alpha_x\left(\tfrac{T-\alpha}{T}t+\alpha,x\right)\right) \leq C' \alpha, 
\]
for $(t,x)\in [0, T] \times \Rr$; that is,
\begin{equation}\label{eq:Aux 2 Lower bound Thm 1.2}
- \tfrac{T}{T-\alpha}u^\alpha_t +H\left(x, \varpi\left(\tfrac{T-\alpha}{T}t+\alpha\right) + u^\alpha_x\left(t,x\right)+u_T'(x)\right) \leq C' \alpha, \quad \text{ for } (t,x)\in [0, T] \times \Rr.
\end{equation}
Therefore, by \eqref{eq:Aux 0 Lower bound Thm 1.2}, \eqref{eq:Aux 1 Lower bound Thm 1.2}, \eqref{eq:Aux 2 Lower bound Thm 1.2}, and using $\varphi=t$ in \eqref{eq:holonomy converge}, we have
\begin{align*}
	& \int_{\Rr} u^\alpha (0, x) dm_0(x) - \int_{\Rr} u^\alpha (T, x) d\nu^\mu(x) 
	\\
	& = \int_{\Omega} - u^\alpha_t(t, x) - v u^\alpha_x(t, x) d\mu(t,x,v) 
	\\
	& \leq  \int_{\Omega}  - u^\alpha_t d\mu (t,x,v)   +\int_{\Omega} L(x,v) + v u_T' d\mu(t,x,v) 
	\\
	&\quad + \int_{\Omega}  v \varpi\left(\tfrac{T-\alpha}{T}t+\alpha\right) + H\left(x, \varpi\left(\tfrac{T-\alpha}{T}t+\alpha\right) + u^\alpha_x+u_T'\right) d\mu (t,x,v) 
	\\
	&\leq  \int_{\Omega} L(x,v) + v u_T' d\mu(t,x,v) 	+ \int_{\Omega}  v \varpi\left(\tfrac{T-\alpha}{T}t+\alpha\right) d\mu (t,x,v) +(T-\alpha)C'\alpha 
	\\
	&\quad +\int_{\Omega} H\left(x, \varpi\left(\tfrac{T-\alpha}{T}t+\alpha\right) + u^\alpha_x+u_T'\right)  -\tfrac{T-\alpha}{T}H\left(x, \varpi\left(\tfrac{T-\alpha}{T}t+\alpha\right) + u^\alpha_x+u_T'\right)	d\mu (t,x,v).
\end{align*}
Taking $\alpha \to 0$ in the previous inequality and using \eqref{eq:holonomy balance} with $\eta = \varpi$, we obtain
\begin{equation}\label{eq:Aux 3 Lower bound Thm 1.2}
	\int_{\Rr} \left(u(0, x)-u_T(x)\right) dm_0(x) \leq \int_{\Omega} L(x,v) + v u_T' d\mu(t,x,v) 	+\int_{\Omega}  Q(t) \varpi(t) d\mu (t,x,v).
\end{equation}
Finally, taking $\varphi(t,x)=\int_0^t Q(s)\varpi(s)ds$ in \eqref{eq:holonomy equality}, we have
\[
	\int_{\Omega} Q(t)\varpi(t) d\mu(t,x,v) = \int_0^T Q(t) \varpi(t) dt.
\]
Hence, \eqref{eq:Aux 3 Lower bound Thm 1.2} becomes
\begin{equation*}
	\int_{\Rr} \left(u(0, x)-u_T(x)\right) dm_0(x) -\int_0^T Q(t) \varpi(t) dt \leq \int_{\Omega} L(x,v) + v u_T' d\mu(t,x,v).
\end{equation*}
Since  $\mu \in \mathcal{H}(m_0)$ is arbitrary, the preceding inequality completes the proof.
\end{proof}

For the second part of the proof of Theorem \ref{thm: Representation}, we rely on \eqref{eq:regularized MFG}, the regularized version of \eqref{eq:MFG system},
subject to \eqref{eq:initial terminal}. We recall that, if Assumptions \ref{hyp:Q C infty}, \ref{hyp: H separability DS} and \ref{hyp: V-uT Lipschitz 2nd D bounded DS}  hold, (\cite{gomes2018mean}, Theorem 1) there exists a solution $(u^\epsilon, m^\epsilon, \varpi^\epsilon)$ of \eqref{eq:regularized MFG} and \eqref{eq:initial terminal}, where $u^\epsilon$ is a viscosity solution of the first equation, Lipschitz and semiconcave in $x$, and differentiable $m^\epsilon$-almost everywhere, $m^\epsilon \in C([0, T ], \Pp(\Rr) )$ w.r.t. the $1$-Wasserstein distance, and $\varpi^\epsilon \in  W^{1,1}([0,T])$ is continuous.
Moreover, if $\epsilon > 0$ or $\epsilon=0$ and Assumption \ref{hyp: V-uT convex DS} holds, this solution is unique. Using the previous results for the solution of \eqref{eq:regularized MFG} when $\epsilon>0$, we take $\epsilon \to 0$ to exhibit a measure $\mu \in \mathcal{H}(m_0)$ for which the inequality 
\begin{equation*}
	\int_{\Rr} u(0, x) - u_T(x) dm_0(x) - \int_0^T \varpi(t) Q(t) dt \geq 
	 \int_0^T \int_{\Rr^2} L(x,v) + v u_T'(x)d\mu(t,x,v)
\end{equation*}
holds. We begin by establishing the following moment estimate for the probability measures $m^\epsilon$ when $\epsilon>0$.
\begin{proposition} \label{prop: gamma1 moment mu}
	Suppose Assumptions \ref{hyp: H convex}, \ref{hyp: H growth}, \ref{hyp:Q C infty}, \ref{hyp: H separability DS}-
	\ref{hyp: V-uT convex DS} hold. Assume further that $m_0$ satisfies Assumption \ref{hyp: m_0 moment}. Then, for $0<\epsilon<1$, the solution $(u^\epsilon,m^\epsilon,\varpi^\epsilon)$ of \eqref{eq:regularized MFG}-\eqref{eq:initial terminal} satisfies
	\begin{equation*}
		\int_{\Rr}|x|^{\gamma} m^\epsilon(t,x)dx < C \quad \mbox{for almost every}~  t \in [0,T],
	\end{equation*} 
	where the constant $C$ is independent of $\epsilon$.
\end{proposition}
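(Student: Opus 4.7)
The plan is to derive a Gronwall-type differential inequality for the $\gamma$-th absolute moment
\[
M^\epsilon_\gamma(t) := \int_{\Rr} |x|^\gamma m^\epsilon(t,x) dx
\]
by testing the Fokker-Planck equation, i.e., the second equation in \eqref{eq:regularized MFG}, against $|x|^\gamma$. Since $|x|^\gamma$ is neither smooth at the origin nor compactly supported, I would first replace it by an even $C^2$ function $\Psi$ coinciding with $|x|^\gamma$ for $|x| \geq 1$, multiplied by a smooth cutoff $\theta_R$ equal to $1$ on $[-R, R]$ and supported in $[-2R, 2R]$. Since $(u^\epsilon, m^\epsilon, \varpi^\epsilon)$ is a classical solution with $m^\epsilon$ a smooth probability density decaying at infinity (by the regularity results in \cite{gomes2018mean}), two integrations by parts and sending $R \to \infty$ give
\begin{align*}
\frac{d}{dt} \int_{\Rr} \Psi(x) m^\epsilon dx = -\int_{\Rr} \Psi'(x) H_p(x, \varpi^\epsilon + u^\epsilon_x) m^\epsilon dx + \epsilon \int_{\Rr} \Psi''(x) m^\epsilon dx.
\end{align*}

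For the transport term, the growth condition $|H_p(x, p)| \leq C(|p|^{\gamma_2-1} + 1)$ from Assumption \ref{hyp: H growth}, combined with the uniform-in-$\epsilon$ Lipschitz bound on $x \mapsto u^\epsilon$ and the $L^\infty$ bound on $\varpi^\epsilon$ established in \cite{gomes2018mean} under Assumptions \ref{hyp: H separability DS} and \ref{hyp: V-uT Lipschitz 2nd D bounded DS}, shows that $\|H_p(\cdot, \varpi^\epsilon + u^\epsilon_x)\|_{L^\infty([0,T]\times \Rr)} \leq K$ with $K$ independent of $\epsilon$. Using $|\Psi'(x)| \leq \gamma|x|^{\gamma-1} + C_\Psi$ and Young's inequality $|x|^{\gamma-1} \leq \tfrac{\gamma-1}{\gamma}|x|^\gamma + \tfrac{1}{\gamma}$ bounds the transport integral by $C_1 M^\epsilon_\gamma(t) + C_2$. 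The viscosity term is controlled through $|\Psi''(x)| \leq \gamma(\gamma-1)|x|^{\gamma-2} + C_\Psi'$, which is integrable against $m^\epsilon$ since $\gamma > 1$ and $m^\epsilon$ is bounded (again from the regularity results in \cite{gomes2018mean}), contributing an $O(\epsilon)$ term. Collecting estimates, for $\epsilon \in (0,1)$,
\[
\frac{d}{dt} \int_{\Rr} \Psi m^\epsilon dx \leq C_1 \int_{\Rr} \Psi m^\epsilon dx + C_3,
\]
with constants independent of $\epsilon$. Applying Gronwall's inequality, using Assumption \ref{hyp: m_0 moment} to bound the initial moment $\int \Psi dm_0 \leq \int |x|^\gamma dm_0 + C_\Psi < \infty$, and finally observing $|x|^\gamma \leq \Psi(x) + C$ yields the uniform bound on $M^\epsilon_\gamma$.

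The principal obstacle is securing the uniform $L^\infty$ control of $H_p(x, \varpi^\epsilon + u^\epsilon_x)$; this requires invoking the Lipschitz-in-$x$ estimates for $u^\epsilon$ and the uniform bound on $\varpi^\epsilon$ from \cite{gomes2018mean}, both of which depend crucially on Assumptions \ref{hyp: H separability DS} and \ref{hyp: V-uT Lipschitz 2nd D bounded DS}. A secondary issue is making the truncation and limit argument rigorous when passing from the test function $\Psi\theta_R$ to $\Psi$; this is standard given the uniform parabolicity of \eqref{eq:regularized MFG} and the smoothness of $m_0$ from Assumption \ref{hyp: V-uT Lipschitz 2nd D bounded DS}, both of which provide sufficient decay of $m^\epsilon$ at infinity to annihilate boundary contributions.
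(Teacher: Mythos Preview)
Your approach is correct but differs from the paper's. Both proofs hinge on the same key ingredient: the uniform-in-$\epsilon$ bound $\|H_p(\cdot,\varpi^\epsilon+u^\epsilon_x)\|_{L^\infty}\leq \tilde C$, obtained from the growth condition in Assumption~\ref{hyp: H growth} together with the $\epsilon$-independent Lipschitz bound on $u^\epsilon$ and the $L^\infty$ bound on $\varpi^\epsilon$ from \cite{gomes2018mean}. From there the two arguments diverge. The paper exploits the stochastic representation: since $m^\epsilon(t,\cdot)$ is the law of the diffusion $\mathrm{d}\mathrm{x}_t=-H_p(\mathrm{x}_t,\varpi^\epsilon+u^\epsilon_x)\,\mathrm{d}t+\sqrt{2\epsilon}\,\mathrm{d}W_t$, one writes $\mathrm{x}_t$ in integral form, bounds $|\mathrm{x}_t|^\gamma$ pointwise via the drift bound and $|W_t|^\gamma$, and then takes expectation using the explicit Gaussian moment $\mathbb{E}[|W_t|^\gamma]=\tfrac{(2t)^{\gamma/2}}{\pi}\Gamma(\tfrac{\gamma+1}{2})$. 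This avoids Gronwall and any cutoff argument. Your route is the purely PDE one: test the Fokker--Planck equation against a smoothed $|x|^\gamma$, integrate by parts, and close via Gronwall. It is self-contained at the level of the equation and does not require invoking the SDE link or uniqueness of weak solutions of the Fokker--Planck equation, at the cost of the truncation bookkeeping. One small imprecision: your viscosity term is not a bare $O(\epsilon)$ constant when $\gamma>2$, since $|\Psi''(x)|\sim |x|^{\gamma-2}$ then feeds back into $M^\epsilon_\gamma$; but this is harmless, as it is absorbed into the linear term of the Gronwall inequality.
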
 
\begin{proof}
	By Assumptions \ref{hyp: H separability DS}, \ref{hyp: V-uT Lipschitz 2nd D bounded DS} and \ref{hyp: V-uT convex DS}, there exist a unique solution $(u^\epsilon,m^\epsilon,\varpi^\epsilon)$ of \eqref{eq:regularized MFG} and \eqref{eq:initial terminal} (\cite{gomes2018mean}, Theorem 1). Then, by Assumptions \ref{hyp: H convex}, \ref{hyp: H growth}, \ref{hyp:Q C infty}, \ref{hyp: H separability DS} and \ref{hyp: V-uT Lipschitz 2nd D bounded DS}, and the bounds on $\varpi^\epsilon$ and $u_x^\epsilon$ (see Proposition \ref{prop:price Lipschitz}  and \cite{gomes2018mean}, Propositions 1 and 6), we have, for $0<\epsilon<1$,
	\begin{align}\label{eq:aux Hp bound}
		|H_p(\varpi^\epsilon(t)+u^\epsilon_x(t,x))| & \leq C\left(|\varpi^\epsilon(t)+u^\epsilon_x(t,x)|^{\gamma_2-1}+1\right)\nonumber
		\\
		& \leq C\left(C'(\gamma_2)\left(\|\varpi^\epsilon\|_{\infty}^{\gamma_2-1} + \mbox{Lip}(u^\epsilon)^{\gamma_2-1} \right) +1 \right) \nonumber
		\\
		& \leq C\left(C'(\gamma_2)\left(\epsilon^{\gamma_2-1} C'+ \mbox{Lip}(u^\epsilon)^{\gamma_2-1} \right) +1 \right) \nonumber
		\\
		& =C_1 \epsilon^{\gamma_2-1} + C_2 \nonumber
		\\
		& \leq \tilde{C},
	\end{align}
	where $C'(\gamma_2)=\max\{2^{\gamma_2-2},1\}$ and $\mbox{Lip}(u^\epsilon)$, and therefore $C_1$, $C_2$, and $\tilde{C}$ are independent of $\varpi^\epsilon$ and $\epsilon$. Furthermore, $u^\epsilon$ defines the optimal feedback in a stochastic optimal control problem, for which the optimal trajectory satisfies 
	\begin{equation}\label{eq:aux stochastic dynamics}
		\mathrm{d}\mathrm{x}_t = -H_p(\mathrm{x}_t,\varpi^\epsilon(t) + u_x^\epsilon(t,\mathrm{x}_t))\mathrm{d}t + \sqrt{2 \epsilon}\mathrm{d}W_t,
	\end{equation}
	where $W_t$ is a one-dimensional Brownian motion (see \cite{gomes2018mean}). Using Assumptions \ref{hyp: H growth} and \ref{hyp: H separability DS}, the vector field
	\[
		(t,x) \mapsto H_p(x,\varpi^\epsilon(t) + u_x^\epsilon(t,x))=H_p(\varpi^\epsilon(t) + u_x^\epsilon(t,x))
	\]
is bounded and  uniformly Lipschitz. Hence, $m(t,\cdot)=\mathcal{L}(\mathrm{x}_t)$, where $\mathcal{L}(\mathrm{x})$ denotes the law of the random variable $\mathrm{x}$, is a weak solution of the second equation in \eqref{eq:regularized MFG} (\cite{cardaliaguet2018short}, Lemma 4.2.3), and by Assumption \ref{hyp: V-uT convex DS}, this weak solution is unique.  Hence $m^\epsilon(t,\cdot)=\mathcal{L}(\mathrm{x}_t)$. Writing \eqref{eq:aux stochastic dynamics} as
	\[
	\mathrm{x}_t = x + \int_0^t -H_p(\mathrm{x}_t,\varpi^\epsilon(t) + u_x^\epsilon(t,\mathrm{x}_t))\mathrm{d}t +  \int_0^t  \sqrt{2 \epsilon}\mathrm{d}W_t,
	\]
	where $x \in \Rr$, and using \eqref{eq:aux Hp bound}, we have, for $0<\epsilon<1$,
	\begin{align}\label{eq:aux trajectory zeta1 moment}
		|\mathrm{x}_t|^{\gamma} & \leq  2^{^{\gamma}-1} \left( |x|^{\gamma} + 2^{\gamma-1}\left( T^{\gamma}\tilde{C}^{\gamma} +  \sqrt{2 \epsilon}^{\gamma} |W_t|^{\gamma}\right)\right) \nonumber
		\\
		& \leq 2^{^{\gamma}-1} |x|^{\gamma} + C_1' + C_2' |W_t|^{\gamma}, 
	\end{align}
	where $C_1'$ and $C_2'$ are independent of $\varpi^\epsilon$ and $\epsilon$. Because $W_t$ is normally distributed w.r.t. the measure $m^\epsilon(t,x)dx$ in $\Rr$, we have 
	\begin{equation*}
		\Ee[|W_t|^{\gamma}]=\int_{\Rr} |W_t|^{\gamma} m^\epsilon(t,x)dx = \frac{(2t)^{\gamma/2}}{\pi}\Gamma(\tfrac{\gamma+1}{2}),
	\end{equation*}
	where $\Gamma(\cdot)$ denotes the Gamma function. Integrating \eqref{eq:aux trajectory zeta1 moment} w.r.t. $m^\epsilon(t,x)dx$, using the previous formula, and recalling the initial condition for $m^\epsilon$ in \eqref{eq:initial terminal}, we obtain that $m^\epsilon$ satisfies
	\begin{align*}
		\int_{\Rr} |x|^{\gamma} m^\epsilon(t,x) dx & \leq 2^{\gamma-1}  \int_{\Rr}|x|^{\gamma} m_0(x)dx + C_1' + C_2' \frac{(2T)^{\gamma/2}}{\pi}\Gamma(\tfrac{\zeta_1+1}{2}).
	\end{align*}
	By Assumption \ref{hyp: m_0 moment}, the right-hand side of the previous inequality is bounded independently of $\epsilon$, for $0<\epsilon<1$, as stated.
\end{proof}

Let $t \in [0,T]$. Define $\beta^\epsilon_t \in \Pp(\Rr^2)$ by
\begin{equation*}
  	\int_{\Rr^2} \psi(x,p)d\beta^\epsilon_t(x,p) = \int_{\Rr} \psi(x,\varpi^\epsilon(t) +u_x^\epsilon (t,x) ) m^\epsilon(t,x) \dx \quad \mbox{for all} ~ \psi \in C_{\bar{\zeta}}(\Rr^2),
 \end{equation*}
where $C_\zeta(\Rr^2)=\{\phi\in C(\Rr^2):  \lim_{|x|,|v| \to \infty} \frac{\phi(x,v)}{1+|x|^{\zeta_1} + |v|^{\zeta_2}} = 0 \}$ and $\bar{\zeta}=(\gamma_1,\gamma_2^\prime)$. Note that the well definiteness of the measure $\beta^\epsilon_t$ is ensured by Proposition \ref{prop: gamma1 moment mu}. Relying on the definition of $\beta^\epsilon_t$, we define $\mu^\epsilon_t \in \Pp(\Rr^2)$ by
 \begin{equation*}
 	\int_{\Rr^2} \psi(x,-L_v(x,v)) d \mu^\epsilon_t(x,v) = \int_{\Rr^2} \psi(x,p) d\beta^\epsilon_t(x,p) \quad \mbox{for all} ~ \psi \in C_{\bar{\zeta}}(\Rr^2).
 \end{equation*}
If Assumption \ref{hyp: H convex} holds, the relation $v=-H_p(x,p)$ if and only if $p=-L_v(x,v)$ (see Remark \ref{remark:H growth}), implies
 \begin{equation*}
  	\int_{\Rr^2} \psi(x,-H_p(x,p)) d\beta^\epsilon_t(x,p) 	=\int_{\Rr^2} \psi(x,v) d \mu^\epsilon_t(x,v).
 \end{equation*}
Finally, we define $\beta^\epsilon,\mu^\epsilon \in \mathcal{U}^{\bar{\zeta}} \cap \mathcal{R}^+(\Omega)$ by 
 \[
 	\int_{\Omega} f(t,x,v) d\beta^\epsilon(t,x,v) = \int_0^T \int_{\Rr^2} f(t,x,v) d\beta^\epsilon_t(x,v) \dt,
 \]  
 and
  \begin{equation}\label{def:mu epsilon}
   	\int_{\Omega} f(t,x,v) d\mu^\epsilon(t,x,v) = \int_0^T \int_{\Rr^2} f(t,x,v) d\mu^\epsilon_t(x,v)\dt,
  \end{equation}
 for all $f \in C_{\bar{\zeta}}(\Omega)$ (see Remark \ref{remark:Czeta dual}). Under Assumptions \ref{hyp: H convex}, \ref{hyp: H growth}, \ref{hyp:Q C infty}, \ref{hyp: H separability DS}, \ref{hyp: V-uT Lipschitz 2nd D bounded DS} and \ref{hyp: V-uT convex DS}, the non-negative and finite Radon measures $\mu^\epsilon$ defined by \eqref{def:mu epsilon} have a weak limit in $\mathcal{U}^{\bar{\zeta}}$ as $\epsilon \to 0$.

We show the existence of a weak limit of the Radon measures $\mu^\epsilon$ defined by \eqref{def:mu epsilon}.

\begin{proposition}\label{prop:weak limi mu}
Suppose Assumptions \ref{hyp: H convex}, \ref{hyp: H growth}, \ref{hyp:Q C infty}-
\ref{hyp: V-uT convex DS} hold. Then, there exists $\mu \in \mathcal{U}^{\bar{\zeta}} \cap \mathcal{R}^+(\Omega)$, where $\bar{\zeta}=(\gamma_1,\gamma_2^\prime)$, such that, up to a sub-sequence, the sequence of Radon measures $\mu^\epsilon$ defined by \eqref{def:mu epsilon} weakly converge to $\mu$; that is, for all $f \in C_{\bar{\zeta}}(\Omega)$
\begin{equation}\label{prop:weak limi mu-eq}
\begin{split}
\int_{\Omega} f(t,x,v) d\mu^\epsilon(t,x,v) \to 
\int_{\Omega} f(t,x,v) d\mu(t,x,v).
\end{split}
\end{equation}
\end{proposition}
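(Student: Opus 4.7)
The plan is to obtain $\mu$ via a Banach–Alaoglu argument in the dual pairing $(C_{\bar\zeta}(\Omega), \mathcal{U}^{\bar\zeta})$ described in Remark \ref{remark:Czeta dual}, applied to the family $\{\mu^\epsilon\}_{0<\epsilon<1}$. The central task is therefore to establish a uniform bound
\[
\sup_{0<\epsilon<1}\int_{\Omega}\bigl(1+|x|^{\gamma_1}+|v|^{\gamma_2'}\bigr)\,d\mu^\epsilon(t,x,v)\leq C,
\]
i.e.\ uniform boundedness of $\mu^\epsilon$ in the norm of $\mathcal{U}^{\bar\zeta}$, after which compactness and preservation of non-negativity will close the argument.

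First I would verify the three moments separately. Since each $\mu^\epsilon_t$ is a probability measure on $\Rr^2$, the constant weight yields $\int_\Omega d\mu^\epsilon=T$. For the $x$-moment, I would use the construction of $\mu^\epsilon$ from $m^\epsilon$ to rewrite
\[
\int_\Omega|x|^{\gamma_1}\,d\mu^\epsilon=\int_0^T\!\!\int_{\Rr}|x|^{\gamma_1}m^\epsilon(t,x)\,dx\,dt,
\]
which is uniformly bounded in $\epsilon$ by Proposition \ref{prop: gamma1 moment mu} (using $\gamma>\gamma_1$ in Assumption \ref{hyp: m_0 moment}). For the $v$-moment, I would invoke the uniform $L^\infty$-bound \eqref{eq:aux Hp bound} derived in the proof of Proposition \ref{prop: gamma1 moment mu}, namely $|H_p(\varpi^\epsilon+u^\epsilon_x)|\leq\tilde C$ independently of $\epsilon$ (this relies on Proposition \ref{prop:price Lipschitz} and the $L^\infty$ bound for $u^\epsilon_x$ from \cite{gomes2018mean}, Proposition 1), so that
\[
\int_\Omega|v|^{\gamma_2'}d\mu^\epsilon=\int_0^T\!\!\int_\Rr|H_p(\varpi^\epsilon+u^\epsilon_x)|^{\gamma_2'}m^\epsilon\,dx\,dt\leq T\tilde C^{\gamma_2'}.
\]

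Next I would carry out the compactness step by transferring to the $C_0$--Radon pairing. For each $\epsilon$, define the finite positive Radon measure $\tilde\mu^\epsilon$ on $\Omega$ by $d\tilde\mu^\epsilon=(1+|x|^{\gamma_1}+|v|^{\gamma_2'})\,d\mu^\epsilon$; the uniform bound above states $\sup_\epsilon\tilde\mu^\epsilon(\Omega)<\infty$. Since $\Omega$ is a separable locally compact metric space, $C_0(\Omega)$ is separable, so by Banach–Alaoglu the sequence $\{\tilde\mu^\epsilon\}$ admits a weak-$*$ convergent subsequence with limit $\tilde\mu\in C_0(\Omega)^*\simeq\mathcal{R}(\Omega)$. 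Setting
\[
d\mu(t,x,v):=\frac{1}{1+|x|^{\gamma_1}+|v|^{\gamma_2'}}\,d\tilde\mu(t,x,v)
\]
yields $\mu\in\mathcal{U}^{\bar\zeta}$, and by the isomorphism of Remark \ref{remark:Czeta dual} any $f\in C_{\bar\zeta}(\Omega)$ corresponds to $\psi=f/(1+|x|^{\gamma_1}+|v|^{\gamma_2'})\in C_0(\Omega)$, so
\[
\int_\Omega f\,d\mu^\epsilon=\int_\Omega\psi\,d\tilde\mu^\epsilon\longrightarrow\int_\Omega\psi\,d\tilde\mu=\int_\Omega f\,d\mu,
\]
establishing \eqref{prop:weak limi mu-eq}. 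Non-negativity of $\mu$ is immediate: testing against non-negative $\psi\in C_0(\Omega)$ gives $\int\psi\,d\tilde\mu\geq0$, hence $\tilde\mu\geq0$ and therefore $\mu\geq0$.

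The main obstacle is the $v$-moment estimate, because it requires the uniform-in-$\epsilon$ $L^\infty$ control of $H_p(\varpi^\epsilon+u^\epsilon_x)$; this in turn depends critically on the uniform Lipschitz bound for $u^\epsilon$ in $x$ (from Assumption \ref{hyp: V-uT Lipschitz 2nd D bounded DS} and \cite{gomes2018mean}) together with the uniform bound on $\varpi^\epsilon$ provided by Proposition \ref{prop:price Lipschitz}. Once this is in hand, the Banach–Alaoglu extraction and the passage through the $C_0$--isomorphism are routine.
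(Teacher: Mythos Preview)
Your argument is correct and follows the same overall scheme as the paper --- weight $\mu^\epsilon$ by $(1+|x|^{\gamma_1}+|v|^{\gamma_2'})$, extract a subsequential limit of the weighted measures, and divide back --- but you use a different compactness tool. The paper proves the \emph{stronger} moment bound
\[
\int_\Omega\bigl(1+|x|^{\gamma}+|v|^{\gamma_2'+1}\bigr)\,d\mu^\epsilon\leq C,
\]
with $\gamma>\gamma_1$ from Assumption~\ref{hyp: m_0 moment}, precisely so that the weighted measures $\nu^\epsilon=(1+|x|^{\gamma_1}+|v|^{\gamma_2'})\mu^\epsilon$ possess a uniform $\alpha_0$-moment and are therefore \emph{tight}; Prohorov's theorem then yields weak convergence against all of $C_b(\Omega)$. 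You bypass this: uniform boundedness of $\tilde\mu^\epsilon(\Omega)$ alone, together with separability of $C_0(\Omega)$, already gives sequential weak-$*$ compactness in $\mathcal{R}(\Omega)$ by Banach--Alaoglu, and since the statement only asks for convergence against $f\in C_{\bar\zeta}(\Omega)$ --- equivalently $\psi=f/(1+|x|^{\gamma_1}+|v|^{\gamma_2'})\in C_0(\Omega)$ --- this is all that is needed. Your route is therefore slightly more economical for the stated proposition; the paper's tightness argument yields the additional information that no mass escapes to infinity, which is not required here.
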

\begin{proof}
By \eqref{eq:aux Hp bound} and Proposition \ref{prop: gamma1 moment mu}, we have
\begin{align*}
&\int_{0}^{T}	\int_{\Rr}\left(1+ |x|^{\gamma} + |H_p(x,\varpi^\epsilon(t)+u_x^\epsilon(t,x))|^{\gamma_2^\prime+1} \right) m^\epsilon(t,x) \dx \dt
\\
& \leq \int_{0}^{T}\int_{\Rr}\left(1+ |x|^{\gamma} +\tilde{C}^{\gamma_2^\prime+1}\right) m^\epsilon(t,x) \dx\dt \nonumber
	\\
	& \leq C (1+ \tilde{C}^{\gamma_2^\prime+1}).
\end{align*}
Using the previous inequality, an argument similar to that in Remark \ref{Remark:H(m0) not empty} shows that the probability measures $\mu^\epsilon$ defined by \eqref{def:mu epsilon} satisfy
\begin{align}\label{eq:mu epsilon zeta1 abs moment}
&\int_{\Omega} \left(1+|x|^{\gamma}+|v|^{\gamma_2^\prime+1}\right) d\mu^\epsilon(t,x,v) \nonumber
\\
&=\int_{0}^{T} \int_{\Rr^2}\left(1+|x|^{\gamma}+|v|^{\gamma_2^\prime+1}\right) d\mu^\epsilon_t(x,v)\dt \nonumber
\\
&\leq C (1+ \tilde{C}^{\gamma_2^\prime+1}),
\end{align}
where $C$ and $\tilde{C}$ are independent of $\epsilon$. Hence, $\mu^\epsilon\in \mathcal{U}^{\bar{\zeta}} \cap \mathcal{R}^+(\Omega)$, with $\bar{\zeta}=(\gamma_1,\gamma_2^\prime)$. Furthermore, \eqref{eq:mu epsilon zeta1 abs moment} implies that the measure $\nu^\epsilon=\left(1+|x|^{\gamma_1}+|v|^{\gamma_2^\prime}\right) \mu^\epsilon(t,x,v)$ belongs to $\mathcal{R}^+(\Omega)$ and 
\begin{equation*}
\int_{\Omega} \left(t^{\alpha_0}+|x|^{\alpha_0}+|v|^{\alpha_0}\right) d\nu^\epsilon(t,x,v)<C,
\end{equation*}
where $0<\alpha_0<\min\{(\gamma-\gamma_1),\frac{1}{\gamma}(\gamma_2^\prime+1)(\gamma-\gamma_1),\frac{\gamma}{\gamma_2^\prime+1},1 \}$.
 Therefore, as $\epsilon \to 0$, the sequence $\nu^\epsilon$ is tight (\cite{Werner}, Proposition 2.23). Hence, by Prohorov's Theorem (\cite{Werner}, Theorem 2.29), there exists $\nu \in \mathcal{R}^+(\Omega)$ such that, up to a sub-sequence, which we still denote by $\nu^\epsilon$, $\nu^\epsilon$ weakly converges to $\nu$; that is,
\begin{equation}\label{eq:mut weak conv}
	\int_{\Omega} \psi (t,x,v) d\nu^\epsilon(t,x,v) \to \int_{\Omega} \psi(t,x,v) d\nu(t,x,v) \quad \mbox{for all} \quad \psi \in C_b(\Omega).
\end{equation}
Now, taking $\mu=\frac{1}{1+|x|^{\gamma_1}+|v|^{\gamma_2^\prime}}\nu$, we notice that $\mu\in \mathcal{U}^{\bar{\zeta}} \cap \mathcal{R}^+(\Omega)$. Moreover, recalling the definition of $\nu^\epsilon$ from  \eqref{eq:mut weak conv}, we deduce \eqref{prop:weak limi mu-eq}.
\end{proof}

Next, we show that the weak limit provided by Proposition \ref{prop:weak limi mu} belongs to $\mathcal{H}(m_0)$. 
 \begin{proposition} \label{prop:mu epsilon in H}
Suppose Assumptions \ref{hyp: H convex}, \ref{hyp: H growth}, \ref{hyp:Q C infty}-
\ref{hyp: V-uT convex DS} hold. Let $\mu \in \mathcal{R}(\Omega)$ be such that, up to a sub-sequence, the Radon measures $\mu^\epsilon$ defined by \eqref{def:mu epsilon} weakly converge to $\mu$. Then, $\mu \in \mathcal{H}(m_0)$. 
\end{proposition}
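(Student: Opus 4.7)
The plan is to verify that the weak limit $\mu$ from Proposition \ref{prop:weak limi mu} satisfies each of the three conditions defining $\mathcal{H}(m_0,\nu)=\mathcal{H}_1\cap\mathcal{H}_2(m_0,\nu)\cap\mathcal{H}_3$ for a suitable terminal measure $\nu\in\mathcal{P}(\Rr)$. The moment condition $\mu\in\mathcal{H}_1$ is the simplest: since $\zeta_1\leq\gamma_1<\gamma$ and $\zeta_2<\gamma_2'<\gamma_2'+1$, one has the pointwise bound $|x|^{\zeta_1}+|v|^{\zeta_2}\leq 2+|x|^\gamma+|v|^{\gamma_2'+1}$. Approximating $(x,v)\mapsto|x|^{\zeta_1}+|v|^{\zeta_2}$ by a monotone sequence of compactly supported cutoffs in $C_{\bar\zeta}(\Omega)$ (as done in Remark \ref{Remark:H(m0) not empty}), testing against $\mu^\epsilon$, applying the weak convergence from Proposition \ref{prop:weak limi mu} together with the uniform bound \eqref{eq:mu epsilon zeta1 abs moment}, and invoking monotone convergence yields the required finiteness for $\mu$.

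For the balance condition $\mu\in\mathcal{H}_3$, the third equation in \eqref{eq:regularized MFG} together with the definition of $\mu^\epsilon_t$ via $\beta^\epsilon_t$ gives $\int_{\Rr^2}v\,d\mu^\epsilon_t(x,v)=-\int_\Rr H_p(x,\varpi^\epsilon+u^\epsilon_x)m^\epsilon\,dx=Q(t)$, whence
\[
\int_\Omega \eta(t)(v-Q(t))\,d\mu^\epsilon(t,x,v)=0\quad\text{for every }\eta\in C([0,T]).
\]
Both $(t,x,v)\mapsto\eta(t)v$ and $(t,x,v)\mapsto\eta(t)Q(t)$ lie in $C_{\bar\zeta}(\Omega)$ (the former because $\gamma_2'>1$, the latter being bounded since $Q\in C^\infty([0,T])$), so Proposition \ref{prop:weak limi mu} passes the identity to $\mu$.

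The main obstacle is the holonomy condition. For each $\epsilon>0$ and each $\varphi\in C^2_c([0,T]\times\Rr)$, testing the Fokker--Planck equation in \eqref{eq:regularized MFG} against $\varphi$, integrating by parts and using $v=-H_p$ in the definition of $\mu^\epsilon$ yields
\[
\int_\Omega(\varphi_t+v\varphi_x)\,d\mu^\epsilon=\int_\Rr\varphi(T,x)\,dm^\epsilon(T,x)-\int_\Rr\varphi(0,x)\,dm_0(x)-\epsilon\int_0^T\!\!\int_\Rr\varphi_{xx}m^\epsilon\,dx\,dt.
\]
Proposition \ref{prop: gamma1 moment mu}, combined with the weak continuity of $t\mapsto m^\epsilon(t,\cdot)$ and the lower semicontinuity of the $\gamma$-th moment under weak convergence, gives a uniform bound $\int_\Rr|x|^\gamma\,dm^\epsilon(T,x)\leq C$, and thus tightness of $\{m^\epsilon(T,\cdot)\}_\epsilon$. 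By Prokhorov's theorem, along a further subsequence, $m^\epsilon(T,\cdot)$ weakly converges to some $\nu\in\mathcal{P}(\Rr)$. Passing to the limit in the displayed identity -- the LHS by Proposition \ref{prop:weak limi mu}, the first RHS term by weak convergence of $m^\epsilon(T,\cdot)$, and the last RHS term because $|\epsilon\int_0^T\!\int_\Rr\varphi_{xx}m^\epsilon\,dx\,dt|\leq\epsilon T\|\varphi_{xx}\|_\infty\to 0$ -- establishes \eqref{eq:holonomy equality} for every $\varphi\in C^2_c$.

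Finally, to extend from $C^2_c$ to the full class $\Lambda([0,T]\times\Rr)$, I would approximate $\varphi\in\Lambda$ by $\varphi_n(t,x)=(\varphi(t,\cdot)\theta(x/n))\ast\rho_n(x)$, where $\theta$ is the cutoff from Remark \ref{Remark:H(m0) not empty} and $\rho_n$ is a spatial mollifier. The approximations lie in $C^2_c$, converge pointwise to $\varphi$, and satisfy $\|(\varphi_n)_t\|_\infty,\|(\varphi_n)_x\|_\infty\leq C$ uniformly in $n$, with $C$ depending only on the Lipschitz constant of $\varphi$. Using the moment bounds already established for $\mu$, together with the fact that $\nu$ and $m_0$ are probability measures and $\varphi_n,\varphi$ have uniformly bounded $(x,t)$-derivatives, the dominated convergence theorem allows one to pass to the limit in each term of \eqref{eq:holonomy equality}, thereby transferring the identity to arbitrary $\varphi\in\Lambda$. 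Hence $\mu\in\mathcal{H}(m_0,\nu)\subset\mathcal{H}(m_0)$, completing the proof.
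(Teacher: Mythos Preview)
Your argument is correct and reaches the same conclusion, but the route you take for the holonomy condition $\mathcal{H}_2$ differs from the paper's. You work at the $\epsilon$-level: you test the \emph{regularized} Fokker--Planck equation (with its viscosity term $\epsilon\varphi_{xx}$) against $\varphi\in C^2_c$, then pass to the limit on both sides, killing the viscosity term and extracting the terminal measure $\nu$ as an abstract Prokhorov limit of $m^\epsilon(T,\cdot)$. The paper instead rewrites $\int_\Omega(\varphi_t+v\varphi_x)\,d\mu^\epsilon$ as $\int_0^T\!\int_\Rr(\varphi_t-H_p(\varpi^\epsilon+u_x^\epsilon)\varphi_x)m^\epsilon\,dx\,dt$ purely by unwinding the definition of $\mu^\epsilon$, and then invokes the weak form of the \emph{limiting} transport equation for $m$ (no viscosity) together with the convergence $(u^\epsilon,m^\epsilon,\varpi^\epsilon)\to(u,m,\varpi)$ from \cite{gomes2018mean}; this identifies $\nu=m(T,\cdot)$ explicitly. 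Your approach is arguably cleaner because it avoids passing to the limit in the nonlinear term $H_p(\varpi^\epsilon+u_x^\epsilon)$ against $m^\epsilon$; the price is that your $\nu$ is only a subsequential limit and is not tied to the MFG solution, which is however sufficient for membership in $\mathcal{H}(m_0)$. The moment and balance verifications ($\mathcal{H}_1$ and $\mathcal{H}_3$) are handled essentially identically in both proofs.
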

\begin{proof}
The existence of $\mu$ is given by Proposition \ref{prop:weak limi mu}. 
By \eqref{eq:mu epsilon zeta1 abs moment}, we have that $\mu \in \mathcal{H}_1$. Let $(u,m,\varpi)$ be the solution of \eqref{eq:MFG system} and \eqref{eq:initial terminal} (\cite{gomes2018mean}, Theorem 1) . Let $\varphi \in C^1_c([0,T] \times \Rr)$. Because $m$ is a weak solution of the second equation in \eqref{eq:MFG system}, we have
\begin{align*}
	&\int_0^T \int_{\Rr} \big(\varphi_t(t,x) - H_p(x,\varpi + u_x)\varphi_x(t,x) \big)m(t,x) dx 
	\\
	&= \int_{\Rr} \varphi(T,x)m(T,x)\dx - \int_{\Rr} \varphi(0,x) m_0(x) \dx,
\end{align*}
and by \eqref{def:mu epsilon}
\begin{align*}
	\int_{\Omega} \varphi_t (t, x) + v \varphi_x(t, x) d\mu^\epsilon (t,x,v) & = \int_0^T \int_{\Rr^2} \varphi_t (t, x) + v \varphi_x(t, x) ~d\mu^\epsilon_t (x,v)dt 
	\\
	&= \int_0^T \int_{\Rr^2} \varphi_t (t, x) - H_p(x,p) \varphi_x(t, x) ~d\beta^\epsilon_t (x,p)dt 
	\\
	&=\int_0^T \int_{\Rr} \varphi_t (t, x) - H_p(x,\varpi^\epsilon + u_x^\epsilon) \varphi_x(t, x) ~m^\epsilon(t,x) dx dt.
\end{align*}
Now, taking into account \eqref{eq:mu epsilon zeta1 abs moment} and arguing as in Remark \ref{Remark:H(m0) not empty}, we deduce that the previous two identities also hold for any $\varphi \in \Lambda([0,T] \times \Rr)$. Hence, $\mu \in \mathcal{H}_2(m_0,\nu)$ for $\nu = m(T,\cdot) \in \Pp(\Rr)$. 
Finally, the third equation in \eqref{eq:regularized MFG} gives
\begin{equation*}
	\int_{\Omega} \eta(t) (v-Q(t)) d\mu^\epsilon (t,x,v) = \int_0^T \int_{\Rr}  \eta(t) (-H_p(x,\varpi^\epsilon+u_x^\epsilon)-Q(t)) m^\epsilon (t,x)dx dt=0,
\end{equation*}
for all $\eta \in C([0,T])$, which implies that $\mu \in \mathcal{H}_3$. Therefore, $\mu \in \mathcal{H}(m_0)$ as stated.
\end{proof}

Next, we prove the following technical lemma.

	\begin{lemma}\label{int-rep} Let $x_n>y_n$, $y_n\to+\infty$ and $\lim\limits_{n\to\infty}\frac{x_n}{y_n}=1$. Suppose that $\phi\in L^1(\Rr)$ and  $\int_{\Rr}|x|^\sigma\phi \dx <\infty$ for some $\sigma>0$ . Then,
		\begin{equation}\label{lll-eq}
		\lim\limits_{n\to\infty}\frac{1}{2x_n}\int_{-y_n}^{x_n}\int_{-y_n+y}^{x_n+y}\phi(x)\dx\dy=\int_{\Rr}\phi(x)\dx.
		\end{equation}
	\end{lemma}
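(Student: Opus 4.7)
The plan is to convert the double integral into a single integral against an explicit piecewise linear weight, show the weight is uniformly bounded by $1$ and converges pointwise to $1$, and then invoke the dominated convergence theorem.

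First, I would perform two changes of variable. In the inner integral, substitute $u=x-y$ to obtain $\int_{-y_n}^{x_n}\phi(u+y)\,\d u$; then swap the order of integration (permitted by Fubini since $\phi\in L^1$ and the domains are bounded) and substitute $t=u+y$ in the new inner integral. In the resulting $(u,t)$-region $\{-y_n\le u\le x_n,\ u-y_n\le t\le u+x_n\}$, the $u$-slice is nonempty precisely for $t\in[-2y_n,2x_n]$, and its length equals
$$
\psi_n(t)=\min(x_n,t+y_n)-\max(-y_n,t-x_n)=\begin{cases} t+2y_n,& -2y_n\le t\le x_n-y_n,\\ 2x_n-t,& x_n-y_n\le t\le 2x_n.\end{cases}
$$
Hence
$$
\frac{1}{2x_n}\int_{-y_n}^{x_n}\int_{-y_n+y}^{x_n+y}\phi(x)\,\d x\,\d y=\int_{\Rr}\phi(t)\,w_n(t)\,\d t,\qquad w_n(t):=\frac{\psi_n(t)}{2x_n}\mathds{1}_{[-2y_n,2x_n]}(t).
$$

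Next, I would verify that $0\le w_n\le 1$ and $w_n(t)\to 1$ for each fixed $t\in\Rr$. The hypothesis $x_n>y_n$ ensures the peak value is $(x_n+y_n)/(2x_n)\le 1$, giving the uniform bound. For any fixed $t$ and $n$ sufficiently large, $t\in[-2y_n,2x_n]$, and $w_n(t)$ equals either $t/(2x_n)+y_n/x_n$ or $1-t/(2x_n)$; both expressions tend to $1$ since $x_n,y_n\to\infty$ and $y_n/x_n\to 1$.

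Finally, by the dominated convergence theorem with dominant $|\phi|\in L^1(\Rr)$,
$$
\int_{\Rr}\phi(t)\,w_n(t)\,\d t\longrightarrow \int_{\Rr}\phi(t)\,\d t,
$$
which is \eqref{lll-eq}. The main subtlety is the piecewise identification of $\psi_n$: it relies on $x_n>y_n$ so that the thresholds in $\max(-y_n,t-x_n)$ and $\min(x_n,t+y_n)$ switch at the common point $t=x_n-y_n$, and so that the peak height $x_n+y_n$ does not exceed $2x_n$. The moment hypothesis $\int|x|^\sigma\phi\,\d x<\infty$ does not appear to be strictly needed for this DCT argument; it is plausibly included because the lemma is applied in the preceding remark to justify neglecting tail contributions that are controlled by $|x|^\sigma$-moments of $m_0$.
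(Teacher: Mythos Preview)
Your proof is correct. Both you and the paper begin by applying Fubini to rewrite the double integral as $\int_{\Rr}\phi(t)\psi_n(t)\,\d t$ with the same tent-shaped weight $\psi_n$ supported on $[-2y_n,2x_n]$. From that point the arguments diverge slightly: the paper expands $\psi_n$ explicitly into a linear combination of indicator-type pieces, obtaining an identity of the form
\[
2x_n\!\int_{-2y_n}^{2x_n}\!\phi\,\d x+2(y_n-x_n)\!\int_{-2y_n}^{0}\!\phi\,\d x+\int_{-2y_n}^{0}\!\phi x\,\d x-\int_{0}^{2x_n}\!\phi x\,\d x,
\]
and then divides by $2x_n$ and shows each secondary term vanishes in the limit. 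Your route is more direct: you observe that $w_n=\psi_n/(2x_n)$ satisfies $0\le w_n\le 1$ (the peak $(x_n+y_n)/(2x_n)\le 1$ uses $x_n>y_n$) and $w_n\to 1$ pointwise, so dominated convergence with dominant $|\phi|\in L^1$ finishes immediately. Your packaging makes transparent that only $\phi\in L^1(\Rr)$ is needed; the moment hypothesis $\int|x|^\sigma\phi\,\d x<\infty$ is indeed superfluous for the lemma itself, and your remark about its role in the surrounding argument is apt.
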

	\begin{proof} After exchanging the order of the integrals on the left-hand side in \eqref{lll-eq}, we have
		\begin{equation*}
		\begin{split}
		&\int_{-y_n}^{x_n}\int_{-y_n+y}^{x_n+y}\phi(x)\dx\dy= \int_{-2y_n}^{0}\phi(x)\int_{-y_n}^{x+y_n}\dy\dx \\& +
		\int_{0}^{x_n-y_n}\phi(x)\int_{-y_n}^{x_n}\dy\dx+	\int_{0}^{x_n-y_n}\phi(x)\int_{x+y_n}^{x_n}\dy\dx+
		\int_{x_n-y_n}^{2x_n}\phi(x)\int_{-x_n+x}^{x_n}\dy\dx\\&=2x_n \int_{-2y_n}^{2x_n}\phi(x)\dx+2(y_n-x_n)\int_{-2y_n}^{0}\phi(x)\dx+\int_{-2y_n}^{0}\phi(x)x\dx-\int_{0}^{2x_n}\phi(x)x\dx .
		\end{split}
		\end{equation*}
		Dividing  the proceeding equation by $2x_n$ and letting $n\to\infty$, we deduce \eqref{lll-eq}.
\end{proof}

Now, relying on the previous results, we complete the second part of the proof of Theorem \ref{thm: Representation}. This is the content of the following Lemma.
\begin{lemma}
\label{lem: upper bound}
Suppose Assumptions \ref{hyp: H convex}-
\ref{hyp: V-uT convex DS} hold. Let $(u,m,\varpi)$ solve \eqref{eq:MFG system} and \eqref{eq:initial terminal}. 
	Then
	\begin{equation*}
	\int_{\Rr} \left( u(0, x) - u_T(x) \right) dm_0(x) - \int_0^T \varpi(t) Q(t) dt \geq 
	\inf_{\substack{\mu \in \mathcal{H}(m_0)}} \int_0^T \int_{\Rr^2} L(x,v) + v u_T'(x)d\mu(t,x,v).
	\end{equation*}
\end{lemma}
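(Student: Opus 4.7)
The plan is to take the sequence of measures $\mu^\epsilon$ built in \eqref{def:mu epsilon} from the regularized MFG system \eqref{eq:regularized MFG}--\eqref{eq:initial terminal}, and to use its weak limit $\mu$ as the admissible candidate. The existence of the limit and the fact that $\mu \in \mathcal{H}(m_0)$ are already guaranteed by Propositions \ref{prop:weak limi mu} and \ref{prop:mu epsilon in H}, so the task reduces to estimating $\int_\Omega L + v u_T'\,d\mu$ from above by the right-hand side. The strategy will be to prove, at the level of the regularized system, the exact identity
\[
\int_{\Omega} L(x,v) + v u_T'(x) \, d\mu^\epsilon(t,x,v) = \int_{\Rr} (u^\epsilon(0,x) - u_T(x)) \, dm_0(x) - \int_0^T \varpi^\epsilon(t) Q(t)\, dt,
\]
and then pass to the limit $\epsilon \to 0$ using lower semicontinuity on the left and uniform convergence on the right.

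To prove the identity, I will use the Legendre relation $L(x,-H_p(x,p)) = p\,H_p(x,p) - H(x,p)$ (Remark \ref{remark:H growth}) together with the definition of $\mu^\epsilon$ to rewrite
\[
\int_\Omega L\, d\mu^\epsilon = \int_0^T\!\!\int_\Rr \big[(\varpi^\epsilon + u_x^\epsilon) H_p(x,\varpi^\epsilon + u_x^\epsilon) - H(x,\varpi^\epsilon + u_x^\epsilon)\big] m^\epsilon \,dx\,dt.
\]
The balance condition in \eqref{eq:regularized MFG} reduces $\varpi^\epsilon \int H_p m^\epsilon \,dx$ to $-\varpi^\epsilon Q$; the Hamilton--Jacobi equation replaces $H(x,\varpi^\epsilon+u_x^\epsilon)$ by $u_t^\epsilon + \epsilon u_{xx}^\epsilon$; and testing the Fokker--Planck equation with $u^\epsilon$ (integrating by parts in both $x$ and $t$) rewrites $\int u_x^\epsilon H_p m^\epsilon \,dx\,dt$ as boundary terms plus a piece that cancels the $u_t^\epsilon + \epsilon u_{xx}^\epsilon$ contribution. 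What remains is $-\int_0^T \varpi^\epsilon Q \,dt - \int u_T \,m^\epsilon(T,\cdot)\,dx + \int u^\epsilon(0,\cdot)\,dm_0$. For the $v u_T'$ part, the holonomy-type relation for $\mu^\epsilon$ established inside the proof of Proposition \ref{prop:mu epsilon in H}, applied to $\varphi(t,x)=u_T(x)$, yields $\int v u_T' d\mu^\epsilon = \int u_T\, m^\epsilon(T,\cdot)\,dx - \int u_T\,dm_0$, and the $m^\epsilon(T,\cdot)$ terms cancel exactly.

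For the passage to the limit, uniform convergence $\varpi^\epsilon \to \varpi$ (extracted from \cite{gomes2018mean}) handles $\int_0^T \varpi^\epsilon Q \,dt$. Local-uniform convergence $u^\epsilon(0,\cdot) \to u(0,\cdot)$, combined with the uniform Lipschitz estimate for $u^\epsilon$ in $x$ and Assumption \ref{hyp: m_0 moment}, gives $\int u^\epsilon(0,\cdot)\,dm_0 \to \int u(0,\cdot)\,dm_0$ by dominated convergence. On the left, $v u_T' \in C_{\bar{\zeta}}(\Omega)$ because $u_T'$ is bounded and $\gamma_2'>1$, so weak convergence gives $\int v u_T' d\mu^\epsilon \to \int v u_T' d\mu$ directly. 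For $\int L\,d\mu$, since $L$ is bounded below but grows as $|x|^{\gamma_1}+|v|^{\gamma_2'}$ and thus lies outside $C_{\bar{\zeta}}(\Omega)$, I will approximate $L+C \geq 0$ from below by an increasing sequence of nonnegative functions in $C_{\bar{\zeta}}(\Omega)$ (using cut-offs as in Remark \ref{Remark:H(m0) not empty}), apply weak convergence of $\mu^\epsilon$ against each truncation, and combine with monotone convergence to obtain $\int L\,d\mu \leq \liminf_\epsilon \int L\,d\mu^\epsilon$.

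The main obstacle will be the integration-by-parts step: although $u^\epsilon$ and $m^\epsilon$ are smooth for $\epsilon>0$, the absence of decay at $|x|=\infty$ means the boundary terms in the space integration by parts must be argued carefully. This is precisely where Lemma \ref{int-rep} enters: I will perform the integrations by parts on large symmetric intervals $[-y_n,x_n]$, apply the averaging in Lemma \ref{int-rep}, and send $n\to\infty$, using the uniform $\gamma$-moment bound from Proposition \ref{prop: gamma1 moment mu} on $m^\epsilon$ together with the at-most-polynomial growth of $u^\epsilon$ inherited from $V$ (Assumption \ref{hyp: H separability DS}) to kill the resulting boundary contributions.
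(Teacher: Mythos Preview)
Your proposal follows the paper's approach: construct $\mu^\epsilon$ from the regularized system, derive an energy identity via the Legendre relation and the three equations of \eqref{eq:regularized MFG}, control the spatial boundary terms from integration by parts on $\Rr$ with Lemma~\ref{int-rep}, and pass $\epsilon\to 0$ using Propositions~\ref{prop:weak limi mu} and~\ref{prop:mu epsilon in H}.

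Two minor points. First, your ``exact identity'' at level $\epsilon$ actually carries an extra term $-\epsilon\int_0^T\!\int_\Rr u_T''\,m^\epsilon\,dx\,dt$ (equivalently $-\epsilon\int m^\epsilon_{xx}u_T$ after two integrations by parts), coming from the viscosity in the Fokker--Planck equation when you test with $u_T$; this is $O(\epsilon)$ under Assumption~\ref{hyp: V-uT Lipschitz 2nd D bounded DS} and disappears in the limit, so the argument is unaffected --- the paper keeps track of this term explicitly in \eqref{eq:aux limit identity}. Second, your passage to the limit on the left via lower semicontinuity (truncating $L+C\ge 0$ from below by functions in $C_{\bar\zeta}(\Omega)$ and using monotone convergence) is a slight departure from the paper, which instead asserts that $L+vu_T'\in C_{\bar\zeta}(\Omega)$ and applies the weak convergence \eqref{prop:weak limi mu-eq} directly to obtain an equality; your route is arguably more careful given that $L$ has exactly the borderline growth $|x|^{\gamma_1}+|v|^{\gamma_2'}$, and in any case the resulting inequality is all the lemma requires.
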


\begin{proof}
	By Assumption \ref{hyp: H convex} and \eqref{eq:Legendre transform}, the following identity holds
	\begin{equation}\label{eq:aux L identity}
	L(x, v) = H_p(x, - L_v(x, v)) (-L_v(x, v)) - H(x, -L_v(x, v)).
	\end{equation}
	Let $t\in [0,T]$. By Remark \ref{remark:H growth} and  \eqref{eq:aux Hp bound}, we have
	\[
	\int_{\Rr} L(x,-H_p(x,\varpi^\epsilon+u_x^\epsilon)) m^\epsilon(t,x)dx \leq \int_{\Rr} \left( C_2|x|^{\gamma_1} + C\right)m^\epsilon(t,x) dx,
	\]
	where $C$ is independent of $x$ and $\epsilon$. From the previous inequality, Assumption \ref{hyp: m_0 moment} and Proposition \ref{prop: gamma1 moment mu}, and an argument similar to that in Remark \ref{Remark:H(m0) not empty}, we get that the integral $\int_{\Rr^2}L(x,v) d\mu^\epsilon_t(x,v)$ exists and is finite. Hence, we integrate both sides of \eqref{eq:aux L identity} w.r.t. $\mu^\epsilon_t$, and we use the definition of $\beta_t^\epsilon$ to obtain
	\begin{align}\label{eq:Aux -2 Upper bound Thm1.2}
	\int_{\Rr^2} L(x, v) d \mu^\epsilon_t(x,v) &= \int_{\Rr^2} H_p(x, - L_v(x, v)) (-L_v(x, v)) - H(x, -L_v(x, v))d \mu^\epsilon_t(x,v) \nonumber
	\\
	& =  \int_{\Rr^2} H_p(x, p)p - H(x, p)d \beta^\epsilon_t(x,p) \nonumber
	\\
	& = \int_{\Rr} \big(H_p(x, \varpi^\epsilon + u^\epsilon_x) (\varpi^\epsilon + u^\epsilon_x) - H(x, \varpi^\epsilon + u^\epsilon_x) \big) m^\epsilon d x.
	\end{align}
	
	Let $a\in[-\frac{1}{2},0]$ be such that $|m^\varepsilon(t,a)|<\infty$. 
	By Proposition \ref{prop: gamma1 moment mu} and Assumption \ref{hyp: m_0 moment}, we have that $\int_{\Rr}|x|m^\epsilon dx <\infty$. Rewriting the first momentum of $m^\varepsilon$
	\begin{equation*}
	\int_{\Rr}|x|m^\epsilon \dx=\int_{-a}^{0}|x|m^\epsilon \dx+\sum_{n=0}^{\infty}\int_{n}^{n+1}xm^\epsilon \dx+\int_{-n-a-1}^{-n-a}|x|m^\epsilon dx<\infty,
	\end{equation*}
	we deduce that there exists $N_0$ such that for all $N\geq N_0$ 
	\begin{equation*}
	\int_{N}^{N+1}xm^\epsilon dx+\int_{-N-a-1}^{-N-a}|x|m^\epsilon \dx=\int_{N}^{N+1}xm^\epsilon dx+\int_{-N-1}^{-N}|x-a|m^\epsilon(t,x-a) \dx\leq \frac{C}{N}.
	\end{equation*}
	The previous estimates with Chebyshev's inequality imply
	\begin{equation*}
	\begin{split}
	\bigg|\{x\in[N,N+1]:\quad xm^\epsilon>\frac{1}{\sqrt{N}}\} \bigg|\leq \sqrt{N}\int_{N}^{N+1}xm^\epsilon dx\leq \frac{C}{\sqrt{N}},\\
	\bigg|\{x\in[-N-1,-N]:\quad |x-a|m^\epsilon(t,x-a)>\frac{1}{\sqrt{N}}\} \bigg|\leq \sqrt{N}\int_{-N-a-1}^{-N-a}|x|m^\epsilon dx\leq \frac{C}{\sqrt{N}}.
	\end{split}
	\end{equation*}
	Because $a\in[-\frac{1}{2},0]$,  there exists a sequence $\{x_n\}$ such that
	\begin{equation}\label{sequences}
	\begin{split}
	&x_{n}\geq 0 ,\quad \lim\limits_{n\to\infty} x_n=+\infty,\\
	&\lim\limits_{n\to\infty} x_nm^\epsilon(t,2x_n)= \lim\limits_{n\to\infty} x_nm^\epsilon(t,-2x_n-2a)=0.\\
	\end{split}
	\end{equation}
	Let $y_n=x_n+a$.
	
By Assumption \ref{hyp: m_0 moment} follows that there exists $\sigma>0$ such that  $\int_{\Rr}|x|^{\gamma_1+\sigma}m_0\dx<\infty$. Then, relying on Proposition \ref{prop: gamma1 moment mu} and 
	using Lemma \ref{int-rep}, we  rewrite \eqref{eq:Aux -2 Upper bound Thm1.2}
	\begin{equation}\label{eq:Aux -3 Upper bound Thm1.2}
	\begin{split}
	&\int_{\Rr^2} L(x, v) d \mu^\epsilon_t(x,v) =\lim\limits_{n\to\infty}\frac{1}{2x_n}\int_{-y_n}^{x_n}\int_{-y_n+y}^{x_n+y} \big(H_p(x, \varpi^\epsilon + u^\epsilon_x) (\varpi^\epsilon + u^\epsilon_x)\big. \\&\big.- H(x, \varpi^\epsilon + u^\epsilon_x) \big) m^\epsilon \dx\dy=
	\lim\limits_{n\to\infty}\frac{1}{2x_n}\int_{-y_n}^{x_n}\int_{-y_n+y}^{x_n+y}   H_p(x, \varpi^\epsilon+ u^\epsilon_x) m^\epsilon  \big(u^\epsilon -  u_T\big)_x\\&+H_p(x, \varpi^\epsilon + u^\epsilon_x) \big(\varpi^\epsilon +  u_T'\big) m^\epsilon-H(x, \varpi^\epsilon + u^\epsilon_x) m^\epsilon\dx\dy.
	\end{split}
	\end{equation}
	Because $H$ is separable, $u^\epsilon(t,\cdot),u_T\in \mbox{Lip}(\Rr)$, from Proposition \ref{prop: gamma1 moment mu}, we deduce 
	\begin{equation*}
	\lim\limits_{n\to\infty}\frac{1}{2x_n}\left| \int_{-y_n}^{x_n}  H_p(x, \varpi^\epsilon+ u^\epsilon_x) m^\epsilon  \big(u^\epsilon -  u_T\big)\bigg|_{-y_n+y}^{x_n+y}\dy\right|\leq \lim\limits_{n\to\infty}\frac{C}{x_n}\int_{\Rr}|x|m^\epsilon\dx =0.
	\end{equation*}
	Integrating by parts the first term on the right-hand side in \eqref{eq:Aux -3 Upper bound Thm1.2}, using the preceding equality, and the definition of $\beta^\epsilon_t$, \eqref{eq:Aux -3 Upper bound Thm1.2} becomes 
	\begin{align}\label{eq:Aux 0 Upper bound Thm1.2} 
	& \int_{\Rr^2} L(x, v) d \mu^\epsilon_t(x,v) \nonumber
	\\
	& = \lim\limits_{n\to\infty}\frac{1}{2x_n}\int_{-y_n}^{x_n}\int_{-y_n+y}^{x_n+y} - \big(H_p(x, \varpi^\epsilon + u^\epsilon_x) m^\epsilon\big)_x \big(u^\epsilon -u_T\big) -H(x, \varpi^\epsilon + u^\epsilon_x) m^\epsilon \dx\dy \nonumber
	\\
	&  + \lim\limits_{n\to\infty}\frac{1}{2x_n}\int_{-y_n}^{x_n}\int_{-y_n+y}^{x_n+y}\int_\Rr  H_p(x,p) \big(\varpi^\epsilon +  u_T'\big)  d\beta^\epsilon_t(x,p) \dy.
	\end{align} 
	
	Next, we prove well definiteness of several integrals. Note that  Assumption \ref{hyp: m_0 moment}  and the definition of $\mu^\epsilon_t$, yield  
	\begin{equation*}
	\begin{split}
	\left| \int_{\Rr^2}  v|x|^\sigma\big(\varpi^\epsilon +  u_T'\big)  d\mu^\epsilon_t(x,v)\right| &=\left| \int_{\Rr^2}  H_p(x,p) |x|^\sigma\big(\varpi^\epsilon +  u_T'\big)  d\beta^\epsilon_t(x,p)\right|\\&= \left| \int_{\Rr}  H_p(x,\varpi^\epsilon +  u_x^\epsilon) |x|^\sigma\big(\varpi^\epsilon +  u_T'\big) m^\epsilon  \dx\right|\leq C,
	\end{split}
	\end{equation*}
for $\gamma_1<\gamma_1+\sigma<\gamma$.
Relying on	the preceding estimate and considering Lemma \ref{int-rep}, we obtain
	\begin{equation}\label{key-H_p}
	-\lim\limits_{n\to\infty}\frac{1}{2x_n}\int_{-y_n}^{x_n}\int_{-y_n+y}^{x_n+y}\int_\Rr  v\big(\varpi^\epsilon +  u_T'\big)  d\mu^\epsilon_t(x,v)=-\int_{\Rr^2} v\big(\varpi^\epsilon +  u_T'\big)  d\mu^\epsilon_t(x,v).
	\end{equation}
By the second-order energy estimate in  \eqref{second-eng-est} and using Young's inequality, we have
\begin{equation*}
\begin{split}
		\int_0^T \int_{\Rr} |u_{xx}^\epsilon| |x|^{\frac{\gamma}{2}} m^\epsilon \dx \dt\leq \int_0^T \int_{\Rr} (u_{xx}^\epsilon)^2  m^\epsilon +|x|^{\gamma} m^\epsilon\dx \dt \leq C.
\end{split}
\end{equation*}
Hence, Lemma  \ref{int-rep} implies that 
\begin{equation*}
	\begin{split}
\int_0^T \int_{\Rr} |u_{xx}^\epsilon| m^\epsilon \dx \dt =\lim\limits_{n\to\infty}\frac{1}{2x_n}\int_0^T \int_{-y_n}^{x_n}\int_{-y_n+y}^{x_n+y}   |u_{xx}^\epsilon| m^\epsilon\dx \dt  \leq C.
	\end{split}
\end{equation*}
	Using the previous estimate and taking into account that $u^\epsilon\in \mbox{Lip}(\Rr)$ for all $t\in[0,T]$, we get
	\begin{equation}\label{sequences2}
	\begin{split}
	& \lim\limits_{n\to\infty}\frac{1}{2x_n}\left| \int_{0}^{T}\int_{-y_n}^{x_n}\int_{-y_n+y}^{x_n+y}m_x^\epsilon u^\epsilon_x \dx\dy \dt\right| \leq   \lim\limits_{n\to\infty}\frac{1}{2x_n}\left|\int_{0}^{T} \int_{-y_n}^{x_n}m^\epsilon(t,x_n+y) u^\epsilon_x(t,x_n+y)\right. \\&\left. -m^\epsilon(t,-y_n+y) u^\epsilon_x(t,-y_n+y) \dy\dt\bigg|\right. + \lim\limits_{n\to\infty}\frac{1}{2x_n}\int_{0}^{T}\int_{-y_n}^{x_n}\int_{-y_n+y}^{x_n+y} |u_{xx}^\epsilon|m^\epsilon  \dx\dt \\&\leq
	\lim\limits_{n\to\infty}\frac{1}{x_n}\int_{0}^{T}\int_{\Rr} m^\epsilon |u_{x}^\epsilon|\dy \dt+\int_{0}^{T}\int_{\Rr} |u_{xx}^\epsilon| m^\epsilon  \dy\dt\leq C.
	\end{split}
	\end{equation}
	Because $|m^\varepsilon(t,a)|<\infty$ from  \eqref{sequences},  we have
	\begin{equation}\label{key-0}
	\begin{split}
	&\lim\limits_{n\to\infty}\frac{1}{2x_n}\left|\int_{-y_n}^{x_n}  m^\epsilon_x u^\epsilon\bigg|_{-y_n+y}^{x_n+y}\dy \right|\\&\leq\lim\limits_{n\to\infty}\frac{1}{2x_n}\left|\int_{-y_n}^{x_n}  m^\epsilon_x(t,x_n+y) u^\epsilon(t,x_n+y)\dy-\int_{-y_n}^{x_n}  m^\epsilon_x(t,-y_n+y) u^\epsilon(t,-y_n+y)\dy  \right|\\&=\lim\limits_{n\to\infty}\frac{1}{2x_n}\left|\int_{x_n-y_n}^{2x_n}  m^\epsilon_x(t,y) u^\epsilon(t,y)\dy-\int_{-2y_n}^{x_n-y_n}  m^\epsilon_x(t,y) u^\epsilon(t,y)\dy  \right|\\&\leq\lim\limits_{n\to\infty}\frac{1}{2x_n} \left( m^\epsilon(t,2x_n) |u^\epsilon(t,2x_n)|+m^\epsilon(t,-2y_n)|u^\epsilon(t,-2y_n)|+2m^\epsilon(t,a)|u^\epsilon(t,a)|\right. \\& +2\int_{\Rr}m^\epsilon|u_x^\epsilon|\dx)=0.
	\end{split}
	\end{equation}
	Furthermore,  \eqref{sequences2} and \eqref{key-0},  yield
	\begin{equation}\label{sequences4}
	\begin{split}
	\left| \lim\limits_{n\to\infty}\frac{1}{2x_n}\int_{0}^{T}\int_{-y_n}^{x_n}\int_{-y_n+y}^{x_n+y}m^\epsilon_{xx} u^\epsilon \dx\dy\dt\right|  &\leq\left|\lim\limits_{n\to\infty}\frac{1}{2x_n}\int_{0}^{T}\int_{-y_n}^{x_n}  m^\epsilon_x u^\epsilon \bigg|_{-y_n+y}^{x_n+y}\dy\dt \right| \\&+\left| \lim\limits_{n\to\infty}\frac{1}{2x_n}\int_{0}^{T}\int_{-y_n}^{x_n}\int_{-y_n+y}^{x_n+y}m^\epsilon_xu^\epsilon_x \dx\dy\dt\right| \leq C.
	\end{split}
	\end{equation}
Note that \eqref{sequences2}, \eqref{key-0} and \eqref{sequences4} also hold for $u_T$.

	Because  $\varpi^\epsilon\in C[0,T]$, then $H(x, \varpi^\epsilon + u^\epsilon_x) \in L^\infty_{loc}([0,T]\times\Rr)$, which with the regularity of heat equation  implies that $u_x\in C([0,T]\times\Rr)$ and $u_{xx}\in L_{loc}^p([0,T]\times\Rr)$ for every $p\in[1,\infty)$.
	Therefore,  the second and the first equation in \eqref{eq:regularized MFG} imply
	\begin{gather*}   
	-\big(H_p(x, \varpi^\epsilon + u^\epsilon_x) m^\epsilon\big)_x \big(u^\epsilon -u_T\big) = \left(\epsilon m^\epsilon_{xx} -m^\epsilon_t\right)\big(u^\epsilon -u_T\big),
	\\
	-H(x, \varpi^\epsilon + u^\epsilon_x) m^\epsilon=-\left(\epsilon u^\epsilon_{xx}+u^\epsilon_t\right)m^\epsilon.
	\end{gather*}  
Relying on \eqref{sequences4} and using the preceding identities and  the identities in \eqref{key-H_p} after integrating on $[0,T]$ the equation in \eqref{eq:Aux 0 Upper bound Thm1.2}, we obtain
	\begin{equation}\label{eq:aux 22 Upper bound Thm1.2}
	\begin{split}
	& \int_{\Omega} L(x, v) d \mu^\epsilon_t(x,v) \dt
	= -\int_{0}^{T}\int_{\Rr^2} v\big(\varpi^\epsilon +  u_T'\big)  d\mu^\epsilon_t(x,v)\dt\\&+\lim\limits_{n\to\infty}\frac{1}{2x_n}\int_{0}^{T}\int_{-y_n}^{x_n}\int_{-y_n+y}^{x_n+y}- m^\epsilon_t \big(u^\epsilon-u_T\big) + \epsilon m^\epsilon_{xx} \big(u^\epsilon-u_T\big)  -u^\epsilon_t m^\epsilon - \epsilon u^\epsilon_{xx} m^\epsilon\dx \dy\dt
	\\&= \lim\limits_{n\to\infty}\frac{1}{2x_n}\int_{0}^{T}\int_{-y_n}^{x_n}\int_{-y_n+y}^{x_n+y}- \big(m^\epsilon \big(u^\epsilon-u_T\big)\big)_t + \epsilon m^\epsilon_{xx} \big(u^\epsilon-u_T\big)- \epsilon u^\epsilon_{xx} m^\epsilon\dx \dy\dt\\&
	-\int_{\Omega} v\big(\varpi^\epsilon +  u_T'\big)  d\mu^\epsilon_t(x,v)\dt.
	\end{split}
	\end{equation}
	Taking into account \eqref{key-0} and  integrating by parts, we have
	\begin{align}\label{key-1}
	&\lim\limits_{n\to\infty}\frac{1}{2x_n}\left| \int_{0}^{T}\int_{-y_n}^{x_n}\int_{-y_n+y}^{x_n+y}  m^\epsilon_{xx} u^\epsilon-  u^\epsilon_{xx} m^\epsilon\dx \dy\dt\right|  \nonumber
	\\
	&=\lim\limits_{n\to\infty}\frac{1}{2x_n}\left| \int_{0}^{T}\int_{-y_n}^{x_n} m^\epsilon_{x} u^\epsilon-  u^\epsilon_{x} m^\epsilon\bigg|_{-y_n+y}^{x_n+y}\dy\dt\right|  \nonumber
	\\
	&\leq \lim\limits_{n\to\infty}\frac{1}{2x_n}\left( \left|\int_{0}^{T} \int_{-y_n}^{x_n} m^\epsilon_{x} u^\epsilon\bigg|_{-y_n+y}^{x_n+y}\dy\dt\right| + C \int_{0}^{T}\int_{\Rr} m^\epsilon\dx\dt\right) \nonumber
	\\
	&=0.
	\end{align}
	Thus, recalling the definition of $\mu^\epsilon$, \eqref{def:mu epsilon}, and by using   \eqref{sequences4}, \eqref{key-1} in \eqref{eq:aux 22 Upper bound Thm1.2}, we get
	\begin{align*}
	\int_{\Omega} L(x, v) d \mu^\epsilon(t,x,v) 
	&= \lim\limits_{n\to\infty}\frac{1}{2x_n}\int_{0}^{T}\int_{-y_n}^{x_n}\int_{-y_n+y}^{x_n+y} -\left( m^\epsilon\left( u^\epsilon-u_T\right) \right)_t -\epsilon m^\epsilon_{xx} u_T \dx\dy\dt 
	\\
	&  -\int_{\Omega} v\big(\varpi^\epsilon +  u_T'\big)  d\mu^\epsilon(t,x,v).
	\end{align*} 
	After rearranging the terms in  the previous equation, we obtain
	\begin{align}\label{eq:aux limit identity}
	& \int_{\Omega} L(x, v)  + v u_T' d \mu^\epsilon(t,x,v) \nonumber
	=  \lim\limits_{n\to\infty}\frac{1}{2x_n}\int_{0}^{T}\int_{-y_n}^{x_n}\int_{-y_n+y}^{x_n+y} \big(u^\epsilon(0,x) - u_T(x)\big)  m_0\dx\dy\dt \\& - \epsilon \lim\limits_{n\to\infty}\frac{1}{2x_n}\int_{0}^T\int_{-y_n}^{x_n}\int_{-y_n+y}^{x_n+y} m^\epsilon_{xx} u_T \dx\dy \dt   -\int_{\Omega} v\varpi^\epsilon  d \mu^\epsilon(t,x,v).
	\end{align}
	
	Now, we pass to the limit in \eqref{eq:aux limit identity} as follows. By Assumptions \ref{hyp: H convex}, \ref{hyp:Q C infty}, \ref{hyp: H separability DS} and \ref{hyp: V-uT Lipschitz 2nd D bounded DS}, Theorem 1 in \cite{gomes2018mean} guarantees the existence of a sequence such that $u^\epsilon \to u$ and $\varpi^\epsilon \to \varpi$ uniformly, where, for $\varpi$, $u$ solves the first equation in \eqref{eq:MFG system} in the viscosity sense. Furthermore, by Proposition \ref{prop:price Lipschitz}, $\varpi \in W^{1,\infty}([0,T])$.   Remark \ref{hyp: H convex} implies that $L(x, v) + v u_T'(x)$  belongs to $C_{\bar{\zeta}}(\Omega)$, therefore  extracting a further sub-sequence out of the previous sequence, Proposition \ref{prop:weak limi mu} gives the existence of a weak limit $\mu\in\mathcal{U}^{\bar{\zeta}}\cap\mathcal{R}(\Omega)$ for $\mu^\epsilon$ and \eqref{prop:weak limi mu-eq} holds for $L(x, v) + v u_T'(x)$. Using these, by letting $\epsilon \to 0$ in \eqref{eq:aux limit identity}, we obtain
	\begin{align}\label{eq:aux mu identity}
	\int_{0}^T\int_{\Rr^2}  L(x, v) + v u_T'(x) d \mu(t,x,v) 
	& =  \lim\limits_{n\to\infty}\frac{1}{2x_n}\int_{0}^T\int_{-y_n}^{x_n}\int_{-y_n+y}^{x_n+y}( u(0, x) - u_T(x) ) m_0\dx\dy  \notag\\&- \int_{\Omega} v \varpi(t) d \mu(t,x,v).
	\end{align}
	Furthermore, by Proposition \ref{prop:price Lipschitz}, $\varpi \in W^{1,\infty}([0,T])$, and by Proposition \ref{prop:mu epsilon in H} $\mu \in \mathcal{H}(m_0)$. In particular, $\mu \in \mathcal{H}_3$. Therefore, 
	\[
	\int_{0}^T\int_{\Rr^2}  v \varpi(t) d \mu(t,x,v)= \int_0^T Q(t) \varpi(t) \dt.
	\]
	By Assumption \ref{hyp: m_0 moment} and Lemma \ref{int-rep}, we deduce that 
	\begin{equation*}
	\lim\limits_{n\to\infty}\frac{1}{2x_n}\int_{0}^T\int_{-y_n}^{x_n}\int_{-y_n+y}^{x_n+y}( u(0, x) - u_T(x) ) m_0\dx\dy =\int_{0}^{T}\int_{\Rr} u(0, x) - u_T(x) d m_0(x). 
	\end{equation*}
	Therefore,   from \eqref{eq:aux mu identity}, we obtain
	\begin{align*}
	\int_{\Omega} L(x, v) + v u_T'(x) d \mu(t,x,v) 
	& = \int_{\Rr} u(0, x) - u_T(x) d m_0(x)  - \int_0^T Q(t) \varpi(t) dt,
	\end{align*}
which completes the proof.
\end{proof}

%

\bibliographystyle{plain}
\bibliography{mfg.bib}

\end{document}